\newtheorem{theorem}{Theorem}[section]
\newtheorem{proposition}[theorem]{Proposition}
\newtheorem{lemma}[theorem]{Lemma}
\newtheorem{corollary}[theorem]{Corollary}
\newtheorem{conjecture}[theorem]{Conjecture}
\theoremstyle{remark}
\newtheorem{remark}[theorem]{Remark}
\newtheorem{example}[theorem]{Example}
\newtheorem{examples}[theorem]{Examples}
\numberwithin{equation}{section}
\DeclareMathOperator{\Cl}{Cl}
\DeclareMathOperator{\disc}{disc}
\DeclareMathOperator{\diag}{diag}
\DeclareMathOperator{\End}{End}
\DeclareMathOperator{\I}{\mathbf{I}}
\DeclareMathOperator{\Jac}{Jac}
\DeclareMathOperator{\pc}{cp}
\DeclareMathOperator{\Real}{Re}
\DeclareMathOperator{\sgn}{sgn}
\DeclareMathOperator{\sym}{sym}
\DeclareMathOperator{\Trace}{Trace}
\DeclareMathOperator{\vol}{volume}
\DeclareMathOperator{\Mat}{\mathbf{M}}
\DeclareMathOperator{\GL}{\mathbf{GL}}
\DeclareMathOperator{\SO}{\mathbf{SO}}
\DeclareMathOperator{\SU}{\mathbf{SU}}
\DeclareMathOperator{\USp}{\mathbf{USp}}
\newcommand{\CC}{\mathbb{C}}
\newcommand{\FF}{\mathbb{F}}
\newcommand{\NN}{\mathbb{N}}
\newcommand{\PP}{\mathbb{P}}
\newcommand{\QQ}{\mathbb{Q}}
\newcommand{\RR}{\mathbb{R}}
\newcommand{\TT}{\mathbb{T}}
\newcommand{\ZZ}{\mathbb{Z}}
\newcommand{\card}[1]{\lvert#1\rvert}
\newcommand{\h}{h}
\newcommand{\Legendre}[2]{\EuScript{P}_{#1}^{#2}}
\newcommand{\Meijer}[6]{
\renewcommand{\arraystretch}{1.2}
G^{#1,#2}_{#3,#4}\left(#5\left|#6\right.\right)}
\newcommand{\set}[2]{\left\{#1\,\mid\,#2\right\}}
\newcommand{\vaf}{\mathsf{F}}
\newcommand{\vat}{\boldsymbol{\tau}}
\newcommand{\VAA}{\mathscr{C}(\bar{A}_{g})}
\newcommand{\VAC}{\mathscr{C}(X_{g})^{\sym}}
\newcommand{\VAG}{\mathscr{C}(G)^{\circ}}
\newcommand{\VAI}{\mathscr{C}(I_{g})^{\sym}}
\newcommand{\VAS}{\mathscr{C}(\Sigma_{g})}
\begin{document}

\title[trace in the unitary symplectic group]{On the distribution of the trace \\
in the unitary symplectic group \\
and the distribution of Frobenius }

\author{Gilles Lachaud}

\address{Aix Marseille Universit\'e, CNRS, Centrale Marseille, I2M, UMR 7373, 13453 Marseille, France}

\email{gilles.lachaud@univ-amu.fr}

\subjclass[2010]{Primary 11G20, 11G30, 11M50, 22E45; Secondary 05E05, 11K36, 14G10, 33D80, 60B20.}

\keywords{Curves and abelian varieties over finite fields, distribution of the trace of matrices, equidistribution, Frobenius operator, generalized Sato-Tate conjecture, Katz-Sarnak theory, random matrices,  symplectic group, Weyl's integration formula}

\begin{abstract}
The purpose of this article is to study the distribution of the trace on the unitary symplectic group. We recall its relevance to equidistribution results for the eigenvalues of the Frobenius in families of abelian varieties over finite fields, and to the limiting distribution of the number of points of curves. We give four expressions of the trace distribution if $g = 2$, in terms of special functions, and also an expression of the distribution of the trace in terms of elementary symmetric functions. In an appendix, we prove a formula for the trace of the exterior power of the identity representation.
\end{abstract}

\maketitle

\setcounter{tocdepth}{1}
\tableofcontents


\section{Introduction}

Let $G$ be a connected compact Lie group, and $\pi: G \longrightarrow \GL(V)$ a continuous representation of $G$ on a finite dimensional complex vector space $V$. The map
$$m \mapsto \vat(m) = \Trace \pi(m)$$
is a continuous central function on $G$,  whose values lie in a compact interval $I \subset \RR$. The \emph{distribution} or \emph{law} of $\vat$ is the measure  $\mu_{\vat} = \vat_{*}(dm)$ on $I$ which is the image by $\tau$ of the mass one Haar measure $dm$ on $G$. That is, for any continuous real function $\varphi \in \mathscr{C}(I)$, we impose the integration formula
$$
\int_{I} \varphi(x) \,\mu_{\vat}(x) =
\int_{G} \varphi(\Trace \pi(m)) \, dm.
$$
Alternately, if $x \in \RR$, then
$$
\mathrm{volume}\set{m \in G}{\Trace \pi(m) \leq x} = \int_{- \infty}^{x} \mu_{\vat}.
$$

We are especially interested here with the group $G = \USp_{2g}$ of symplectic unitary matrices of order $2g$, with $\pi$ equal to the identity representation in $V = \CC^{4}$. With the help of Weyl's integration formula, one establishes that the distribution $\mu_{\vat}$ has a \emph{density} $f_{\vat}$, that is, a positive continuous function such that $d\mu_{\vat}(x) = f_{\vat}(x) dx$. Our main purpose is the study of $f_{\vat}$, especially in the case $g = 2$  and $g = 3$. For instance, for $g = 2$, we have
$$
f_{\vat}(x) = \frac{1}{4 \pi} \left(1 - \frac{x^{2}}{16}\right)^{4} \ {_{2}F_{1}}\left(\frac{3}{2}, \frac{5}{2} ; 5 ; 1 - \frac{x^{2}}{16} \right) 	
$$
if $\card{x} \leq 4$, with \emph{Gauss' hypergeometric function} ${_{2}F_{1}}$ (see Theorem \ref{hypergeometric}).

Another representation of the distribution of the trace, following a program of Kohel, is realized by the \emph{Vi\`ete map}, which is the polynomial mapping
$$
\mathsf{s}(t) = (s_{1}(t), \dots, s_{g}(t)), \quad t = (t_{1}, \dots, t_{g}),
$$
where $s_{n}(t)$ is the elementary symmetric polynomial of degree $n$. Let $I_{g} = [-2,2]^{g}$. The \emph{symmetric alcove} is the set
$$
\Sigma_{g} = \mathsf{s}(I_{g}) \subset \RR^{g},
$$
which is homeomorphic to the $g$-dimensional simplex. By a change of variables in Weyl's integration formula, one obtains a measure $\alpha_{x}$ on the hyperplane section 
$$V_{x} = \set{s \in \Sigma_{g}}{s_{1} = x}$$
such that, if $\card{x} < 2 g$,
$$
f_{\vat}(x) = \int_{V_{x}} \alpha_{x}(s).
$$
As a motivation for the study of these distributions, it is worthwhile to recall that they provide an answer to the following question:
\begin{center}
\emph{Can one predict the number of points of a curve of given genus over a finite field?}
\end{center}
When a curve $C$ runs over the set $\mathsf{M}_{g}(\FF_{q})$ of $\FF_{q}$-isomorphism classes of (nonsingular, absolutely irreducible) curves of genus $g$ over $\FF_{q}$, the number $\card{C(\FF_{q})}$ seems to vary at random. According to Weil's inequality, an accurate approximation to this number is close to $q + 1$, with a normalized ``error'' term $\vat(C)$ such that
$$
\card{C(\FF_{q})} = q + 1 - q^{1/2} \, \vat(C), \quad \card{\vat(C)} \leq 2g.
$$
The random matrix model developed by Katz and Sarnak gives many informations on the behaviour of the distribution of $\vat(C)$ on the set $\mathsf{M}_{g}(\FF_{q})$. For instance, according to their theory, and letting $g$ be fixed, for every $x \in \RR$, we have, as $q \rightarrow \infty$ (cf. Corollary \ref{CorEquiNbPts}):
$$
\frac{\card{\set{C \in \mathsf{M}_{g}(\FF_{q})}{\vat(C) \leq x}}}{\card{\mathsf{M}_{g}(\FF_{q})}} =
\int_{- \infty}^{x} f_{\vat}(s) ds + O\left(q^{-1/2}\right).
$$
Hence, the knowledge of $f_{\vat}$ provides a precise information on the behaviour of the distribution of the number of points of curves.

The outline of this paper is as follows. After Section \ref{sec_intro}, devoted to notation, we recall in Section \ref{sec_WeilInt} the Weyl's integration formula, expressed firstly in terms of the angles $(\theta_{1}  \dots, \theta_{g})$ defining a conjugacy class, and secondly in terms of the coefficients $t_{j} = 2 \cos \theta_{j}$. We discuss equidistribution results for a family of curves or abelian varieties over a finite field in Section \ref{sec_equi}. In Section \ref{sec_expressions} we obtain four explicit formulas for the trace distribution if $g = 2$, respectively in terms of hypergeometric series, of Legendre functions, of elliptic integrals, and of Meijer $G$-functions. We also give the distribution of the trace for the representation of the group $\SU_{2} \times \SU_{2}$ in $\USp_{4}$.

In the second part of the paper, we take on a different point of view by using elementary symmetric polynomials, and obtain a new expression of Weyl's integration formula. Section \ref{sec_VieteMap} defines the Vi\`ete map, asssociating to a sequence of coordinates the coefficients of the polynomial admitting as roots the elements of this sequence, and Section \ref{sec_alcove} describes the symmetric alcove, that is, the image of the set of ``normalized real Weil polynomials'' by the Vi\`ete map. By a change of variables using the Vi\`ete map, we obtain in Section \ref{sec_symint} a new integration formula, which leads to another expression for the distribution of the trace on the conjugacy classes, in the cases $g = 2$ and $g = 3$. If $g = 2$, we compute also the trace of $\wedge^{2} \pi$. Finally, we include an appendix on the character ring of $\USp_{2 g}$, including a formula on the exterior powers of the identity representation.

I would like to thank David Kohel for fruitful conversations. Also, I warmly thank the anonymous referee for carefully reading this work and for its suggestions, especially regarding the appendix.

\section{The unitary symplectic group}
\label{sec_intro}

The \emph{unitary symplectic group} $G = \USp_{2 g}$ of order $2 g$ is the real Lie group of complex symplectic matrices
$$
G =
\set{m \in \GL_{2 g}(\CC)}{{^{t}\!m}.J.m = J \ \text{and} \ {^{t}\!m}.\bar{m} = \I_{2 g}},
$$
with
$$J = \begin{pmatrix} 0 & \I_{g} \\ -\I_{g} & 0 \end{pmatrix}.$$
Alternately, the elements of $G$ are the matrices
$$
m = \begin{pmatrix} a & - \bar{b}~ \\ b & \bar{a}~ \end{pmatrix} \in \SU_{2g}, \quad
a,b \in \Mat_{g}(\CC).
$$
The torus $\TT^{g} = (\RR/2 \pi \ZZ)^{g}$ is embedded into $G$ by the homomorphism
\begin{equation}
\label{exptore}
\theta = (\theta_{1}, \dots, \theta_{g}) \mapsto
\h(\theta) =
\begin{pmatrix}
\begin{matrix}
e^{i \theta_{1}} & \dots & 0 \\ 
\dots            & \dots & \dots \\
0                & \dots & e^{i \theta_{g}}
\end{matrix}
& 0 \\ 0 &
\begin{matrix}
e^{- i \theta_{1}} & \dots & 0 \\ 
\dots             & \dots & \dots \\
0                 & \dots & e^{- i  \theta_{g}}
\end{matrix}
\end{pmatrix}
\end{equation}
whose image $T$ is a maximal torus in $G$.  The Weyl group $W$ of $(G,T)$ is the semi-direct product of the symmetric group $\mathfrak{S}_{g}$ in $g$ letters, operating by permutations on the $\theta_{j}$, and of the group $N$ of order $2^{g}$ generated by the involutions $\theta_{j} \mapsto - \theta_{j}$. Since every element of $\USp_{2g}$ has eigenvalues consisting of $g$ pairs of complex conjugate numbers of absolute value one, the quotient $T/W$ can be identified with the set $\Cl G$ of conjugacy classes of $G$, leading to a homeomorphism
$$
\begin{CD}
\TT^{g} / W & @>{\sim}>> & T/W & @>{\sim}>> & \Cl G
\end{CD}
$$

\begin{remark}
\label{Phi2g}
Here is a simple description of the set $\Cl G$. Let $\Phi_{2 g}$ be the subset of monic polynomials $p \in \RR[u]$ of degree $2g$, with $p(0) = 1$, with roots consisting of $g$ pairs of complex conjugate numbers of absolute value one. If $\theta \in \TT^{g}$, let
$$
p_{\theta}(u) = \prod_{j = 1}^{g} (u - e^{i \theta_{j}}) (u - e^{- i \theta_{j}}).
$$
The map $\theta \mapsto p_{\theta}$ is a bijection from $\TT^{g}/W$ to $\Phi_{2 g}$. Renumbering, we may assume that
$$
0 \leq \theta_{g} \leq \theta_{g - 1} \leq \dots < \theta_{1} \leq \pi.
$$
The map
$m \mapsto \pc_{m}(u) = \det(u.\I - m)$ induces a homeomorphism
$$
\begin{CD}
\Cl G & @>{\sim}>> & \Phi_{2 g}
\end{CD}
$$
with $\pc_{m} = p_{\theta}$ if and only if $m$ is conjugate to $\h(\theta)$. The polynomial $p_{\theta}$ is \emph{palindromic}, that is, if
$$
p_{\theta}(u) = \sum_{n = 0}^{2 g} (- 1)^{n} a_{n}(\theta) u^{n},
$$
then  $a_{2 g - n}(\theta) = a_{n}(\theta)$ for $0 \leq n \leq g$.
\end{remark}

\section{Weyl's integration formula}
\label{sec_WeilInt}

The box $X_{g} = [0,\pi]^{g}$ is a fundamental domain for $N$ in $\TT^{g}$ and the map $F \mapsto F \circ 
 \h$ defines an isomorphism
\begin{equation}
\label{BasicIsom}
\begin{CD}
\VAG & @>{\sim}>> & \VAC
\end{CD}
\end{equation}
from the vector space $\VAG = \mathscr{C}(\Cl G)$ of \emph{complex central continuous functions} on $G$ to the space $\VAC$ of complex symmetric continuous functions on $X_{g}$. Notice that the isomorphism \eqref{BasicIsom} has an algebraic analog, namely the isomorphism \eqref{BasicIsomPoly} in the appendix. Let $dm$ be the Haar measure of volume $1$ on $G$. If $\vaf \in \VAG$, then
$$
\int_{\Cl G} \vaf(\Dot{m}) \, d\Dot{m} = \int_{G} \vaf(m) \, dm,
$$
where $d\Dot{m}$ is the image measure on $\Cl G$ of the measure $dm$. The following result is classical \cite[Ch. 9, \S{} 6, Th. 1, p. 337]{BkiLIE79en}, \cite[5.0.4, p. 107]{KS}.

\begin{theorem}[Weyl integration formula, I]
\label{WIF1}
If $\vaf \in \VAG$, then
$$ 
\int_{G} \vaf(m) \, dm = 
\int_{X_{g}} \vaf \circ \h(\theta))\mu_{g}(\theta),
$$
with the \emph{Weyl measure}
$$
\mu_{g}(\theta) = \delta_{g}(\theta) d\theta, \quad d\theta = d\theta_{1} \dots d\theta_{g},
$$
$$
\delta_{g}(\theta) = \frac{1}{g!} \prod_{j = 1}^{g} \left(\frac{2}{\pi} \right) \left(\sin \theta_{j}\right)^{2}
\prod_{j < k} \left(2 \cos \theta_{k} - 2 \cos \theta_{j}\right)^{2}. \rlap \qed
$$
\end{theorem}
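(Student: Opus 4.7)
The plan is to deduce the stated formula from the general form of Weyl's integration formula (as in the cited references Bourbaki~LIE, Ch.~9 and Katz--Sarnak~5.0.4), by specializing it to the root system of type $C_g$ attached to $(\USp_{2g},T)$, and then translating from the torus $\TT^{g}$ to its fundamental domain $X_g=[0,\pi]^g$ under the involutive part $N$ of the Weyl group. Concretely, the general formula asserts that, for a central continuous function $f$ on $G$,
$$
\int_G f(m)\,dm \;=\; \frac{1}{|W|}\int_T f(t)\,|\Delta(t)|^{2}\,dt,
$$
where $dt$ is the mass one Haar measure on $T$ and $\Delta(t)=\prod_{\alpha\in R^{+}}(1-\alpha(t)^{-1})$ is the Weyl denominator. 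Since $W=\mathfrak{S}_{g}\ltimes N$ has order $|W|=2^{g}g!$, and $dt=(2\pi)^{-g}d\theta_{1}\cdots d\theta_{g}$ on $\TT^g$ via \eqref{exptore}, the whole computation reduces to evaluating $|\Delta(\h(\theta))|^{2}$ and collecting constants.

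Next I compute $|\Delta|^{2}$ explicitly. The positive roots of type $C_g$ are the long roots $2\varepsilon_{j}$ ($1\le j\le g$) and the short roots $\varepsilon_{j}\pm\varepsilon_{k}$ ($j<k$). Evaluating these characters on $\h(\theta)$ gives $e^{2i\theta_{j}}$ and $e^{i(\theta_{j}\pm\theta_{k})}$ respectively, so
$$
|\Delta(\h(\theta))|^{2}
\;=\;\prod_{j}\bigl|1-e^{2i\theta_{j}}\bigr|^{2}\,\prod_{j<k}\bigl|1-e^{i(\theta_{j}-\theta_{k})}\bigr|^{2}\bigl|1-e^{i(\theta_{j}+\theta_{k})}\bigr|^{2}.
$$
Using $|1-e^{i\alpha}|^{2}=4\sin^{2}(\alpha/2)$ and the product-to-sum identity $\cos\theta_{k}-\cos\theta_{j}=-2\sin\tfrac{\theta_{j}+\theta_{k}}{2}\sin\tfrac{\theta_{j}-\theta_{k}}{2}$, the long-root factor becomes $4^{g}\prod_{j}\sin^{2}\theta_{j}$, and each pair of short roots contributes $16\sin^{2}\tfrac{\theta_{j}+\theta_{k}}{2}\sin^{2}\tfrac{\theta_{j}-\theta_{k}}{2}=(2\cos\theta_{k}-2\cos\theta_{j})^{2}$.

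Finally I handle the passage from $\TT^{g}$ to $X_{g}$. The expression just obtained for $|\Delta(\h(\theta))|^{2}$ is invariant under each involution $\theta_{j}\mapsto-\theta_{j}$ (equivalently $\theta_{j}\mapsto 2\pi-\theta_{j}$), since $\sin^{2}$ is even and $\cos$ is invariant under that change. Consequently $X_{g}=[0,\pi]^{g}$ is a fundamental domain for $N$ acting on $\TT^{g}$, and
$$
\int_{\TT^{g}}f(\h(\theta))\,|\Delta|^{2}\,\frac{d\theta}{(2\pi)^{g}}
\;=\;\frac{2^{g}}{(2\pi)^{g}}\int_{X_{g}}f(\h(\theta))\,|\Delta(\h(\theta))|^{2}\,d\theta.
$$
Plugging in the expression for $|\Delta|^{2}$ and using $|W|=2^{g}g!$ yields the factor $\tfrac{1}{g!}\prod_{j}\tfrac{2}{\pi}\sin^{2}\theta_{j}\prod_{j<k}(2\cos\theta_{k}-2\cos\theta_{j})^{2}$, which is precisely $\delta_{g}(\theta)$.

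There is no real obstacle; the main bookkeeping point is the trigonometric identification of the short-root factor with $(2\cos\theta_{k}-2\cos\theta_{j})^{2}$, and the careful tracking of the constants $2^{g}$, $g!$, and $(2\pi)^{g}$ across the Haar-measure normalization and the reduction to the fundamental domain $X_g$. Everything else is a direct quotation of the general Weyl integration formula.
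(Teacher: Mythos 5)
Your proof is correct and carries out precisely the specialization that the paper delegates to the cited references (Bourbaki LIE Ch.~9 \S6 and Katz--Sarnak 5.0.4): the paper gives no proof of its own, only the citation. Your computation of $|\Delta(\h(\theta))|^2$ for the $C_g$ root system, the bookkeeping of $|W|=2^g g!$ against the $(2\pi)^{-g}$ normalization of Haar measure on $T$, and the $2^g$ gained by folding $\TT^g$ onto the fundamental domain $X_g$ under the sign involutions all check out and recover $\delta_g(\theta)$ exactly.
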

We call the open simplex
\begin{equation}
\label{alctheta}
A_{g} = \set{(\theta_{1}, \dots, \theta_{g}) \in X_{g}}
{0 < \theta_{g} < \theta_{g - 1} < \dots < \theta_{1} < \pi}
\end{equation}
the \emph{fundamental alcove} in $X_{g}$. The closure $\bar{A}_{g}$ of $A_{g}$ is a fundamental domain for $\mathfrak{S}_{g}$ in $X_{g}$, and, for every $f \in \VAC$, we have
$$ 
\int_{X_{g}} f(\theta) \, d\theta = g! \int_{A_{g}} f(\theta) \, d\theta.
$$
There is another way to state Weyl's integration formula, which will be used in Section \ref{sec_symint}. Let $I_{g} = [-2,2]^{g}$. The map
$$
(\theta_{1}, \dots, \theta_{g}) \mapsto (2 \cos \theta_{1}, \dots, 2 \cos \theta_{g})
$$
defines an homeomorphism $X_{g} \longrightarrow I_{g}$. Let
$$
k(t_{1}, \dots, t_{g}) = h\left(\arccos\frac{t_{1}}{2}, \dots, \arccos\frac{t_{g}}{2}\right).
$$
where $\h(\theta)$ is given by \eqref{exptore}. Then the map $F \mapsto F \circ k$ defines an isomorphism
$$
\begin{CD}
\VAG & @>{\sim}>> & \VAI
\end{CD}
$$
where $\VAI$ is the space of complex symmetric continuous function on $I_{g}$. For an algebraic analog, see the isomorphism \eqref{SecondIsomPoly} in the appendix. Let
\begin{equation}
\label{D0D1def}
D_{0}(t) = \prod_{j < k}(t_{k} - t_{j})^{2}, \quad
D_{1}(t) = \prod_{j = 1}^{g} (4 - t_{j}^{2}).
\end{equation}

\begin{proposition}[Weyl integration formula, II]
\label{weylalg}
If $\vaf \in \VAG$, then
$$
\int_{G} \vaf(m) \, dm =
\int_{I_{g}} \vaf \circ k(t) \lambda_{g}(t) \,  dt,
$$
where $t = (t_{1}, \dots, t_{g})$ and $dt = dt_{1} \dots dt_{g}$, with the \emph{Weyl measure}
$$
\lambda_{g}(t) dt, \quad dt = dt_{1} \dots dt_{g},
$$
$$
\lambda_{g}(t) = \frac{1}{(2 \pi)^{g} g!} D_{0}(t) \sqrt{D_{1}(t)}.
$$
\end{proposition}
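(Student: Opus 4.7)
The proposition is a straightforward change of variables applied to the Weyl integration formula (Theorem \ref{WIF1}), so my plan is to carry out that substitution and verify the two measures agree.

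First, I would use the diffeomorphism $\varphi : X_{g} \longrightarrow I_{g}$ given by $\varphi(\theta_{1},\dots,\theta_{g}) = (2\cos\theta_{1},\dots,2\cos\theta_{g})$. By the definition of $k(t)$, we have $\vaf\circ\h(\theta) = \vaf\circ k\circ\varphi(\theta)$ for $\vaf \in \VAG$, so after the change of variables the integrand becomes $\vaf\circ k(t)$ as required. Hence the only real task is to show that $\varphi_{*}(\mu_{g}) = \lambda_{g}(t)\,dt$.

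Second, I would compute the pushforward. Setting $t_{j} = 2\cos\theta_{j}$, one has $dt_{j} = -2\sin\theta_{j}\,d\theta_{j}$, and since $\sin\theta_{j} \geq 0$ on $[0,\pi]$ we get $\sin\theta_{j} = \tfrac{1}{2}\sqrt{4 - t_{j}^{2}}$, so (up to the sign absorbed by reversal of orientation)
$$
d\theta_{1}\cdots d\theta_{g} = \prod_{j=1}^{g}\frac{dt_{j}}{\sqrt{4 - t_{j}^{2}}} = \frac{dt_{1}\cdots dt_{g}}{\sqrt{D_{1}(t)}}.
$$
Also, $\prod_{j=1}^{g}(\sin\theta_{j})^{2} = \prod_{j=1}^{g}\tfrac{1}{4}(4 - t_{j}^{2}) = D_{1}(t)/4^{g}$, and $2\cos\theta_{k} - 2\cos\theta_{j} = t_{k} - t_{j}$, so $\prod_{j<k}(2\cos\theta_{k}-2\cos\theta_{j})^{2} = D_{0}(t)$.

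Third, I would substitute these expressions into $\delta_{g}(\theta)\,d\theta$ and simplify:
$$
\delta_{g}(\theta)\,d\theta = \frac{1}{g!}\left(\frac{2}{\pi}\right)^{g}\!\!\frac{D_{1}(t)}{4^{g}}\,D_{0}(t)\cdot\frac{dt}{\sqrt{D_{1}(t)}} = \frac{1}{(2\pi)^{g}g!}\,D_{0}(t)\sqrt{D_{1}(t)}\,dt = \lambda_{g}(t)\,dt,
$$
which is exactly the desired measure. Combining with the first step gives the stated formula. There is no real obstacle here: the computation is purely a Jacobian calculation, and the only subtlety is keeping track of the orientation of $\varphi$ (which is reversed in each coordinate, but only affects sign and is absorbed when writing the measure $|d\theta|$). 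This is why the factor $\sqrt{D_{1}(t)}$ ends up in the numerator rather than the denominator.
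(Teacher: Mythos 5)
Your proof is correct and follows the same approach as the paper: a change of variables $t_{j} = 2\cos\theta_{j}$ applied to Theorem \ref{WIF1}. The paper states the resulting change-of-variables identity without computation and then invokes Theorem \ref{WIF1}; you simply fill in the Jacobian calculation explicitly, which is the right verification.
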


\begin{proof}
If $\varphi \in \mathscr{C}(I_{g})$, we have
$$
\int_{X_{g}}
\varphi(2 \cos \theta_{1}, \dots, 2 \cos \theta_{g}) \delta_{g}(\theta) \,  d\theta
= \int_{I_{g}} \varphi(t) \lambda_{g}(t) dt.
$$
Apply Weyl's integration formula of Theorem \ref{WIF1}.
\end{proof}

As in \eqref{alctheta}, we call the open simplex
\begin{equation}
\label{alcoveIg}
A_{g} = \set{t \in I_{g}}
{- 2 < t_{1} < t_{2} < \dots < t_{g} < 2}
\end{equation}
the \emph{fundamental alcove in $I_{g}$}. Then $\bar{A}_{g}$ is a fundamental domain of $I_{g}$ for $\mathfrak{S}_{g}$, and if $f \in \VAI$, we have
\begin{equation}
\label{weylalgalc}
\int_{I_{g}} f(t) \,  dt
= g! \int_{A_{g}} f(t) \,  dt.
\end{equation}

\begin{examples}
We have
$$
\lambda_{2}(t) =
\frac{1}{4 \pi^{2}} (t_{1} - t_{2})^{2}
\sqrt{(4 - t_{1}^{2}) (4 - t_{2}^{2})}.
$$
The maximum of $\lambda_{2}$ in $A_{2}$ is attained at the point
$$
t_{0} = (- \sqrt{2}, \sqrt{2}) , \quad \text{with} \quad \lambda_{2}(t_{0}) = \frac{2}{\pi^{2}} \, .
$$
We have also
$$
\lambda_{3}(t) =
\frac{1}{48 \pi^{3}} (t_{1} - t_{2})^{2}(t_{1} - t_{3})^{2}(t_{2} - t_{3})^{2}
\sqrt{(4 - t_{1}^{2}) (4 - t_{2}^{2}) (4 - t_{3}^{2})}.
$$
The maximum of $\lambda_{3}$ in $A_{3}$ is attained at the point
$$
t_{0} = (- \sqrt{3}, 0, \sqrt{3}) , \quad \text{with} \quad \lambda_{3}(t_{0}) = \frac{9}{2 \pi^{3}} \, .
$$
\end{examples}

Now, for the convenience of the reader, we recall some notation on the distribution of central functions. Let $G$ be a connected compact Lie group. The Haar measure $dm$ of volume $1$ on $G$ is a probability measure, and $G$ becomes a probability space; \textit{ipso facto}, its elements become random matrices, and the functions in $\VAG$ are complex random variables on $G$. If $\vaf \in \VAG$ is a real random variable, whose values lie in the compact interval $I \subset \RR$, the \emph{distribution} or \emph{law} of $\vaf$ is the image measure $\mu_{\vaf} = F_{*}dm$ on $I$ such that
$$
\int_{I} \varphi(x) \, \mu_{\vaf}(x) =
\int_{G} \varphi(\vaf(m)) \, dm
\quad \text{if} \quad \varphi \in \mathscr{C}(I),
$$
If $B$ is a borelian subset of $I$, then
$$
\mu_{\vaf}(B) = \vol \set{m \in G}{\vaf(m) \in B},
$$
and the \emph{cumulative distribution function} of $\vaf$ is
$$
\Phi_{\vaf}(x) = \PP(\vaf \leq x) = 
\int_{- \infty}^{x} \mu_{\vaf}(t) =
\int_{\vaf(m) \leq x} \, dm.
$$
The \emph{characteristic function} of $\vaf$ is the Fourier transform of $\mu_{\vaf}(x)$:
$$
\widehat{f}_{\vaf}(t) = \int_{- \infty}^{\infty} e^{i t x} \mu_{\vaf}(x) =
\int_{G} e^{i t \vaf(m)} \, dm =
\int_{X_{g}} e^{i t \vaf \circ h(\theta)} \mu_{g} (\theta).
$$
This is an entire analytic function of $t$, of exponential type, bounded on the real line. The distribution $\mu_{\vaf}$ has a \emph{density} if $\mu_{\vaf}(x) = f_{\vaf}(x) dx$ with a positive function $f_{\vaf}$ in $L^{1}(I)$. If $\mu_{\vaf}$ has a density, and if Fourier inversion holds, then
$$
f_{\vaf}(x) = \frac{1}{2 \pi} \int_{- \infty}^{\infty} \widehat{f}_{\vaf}(t) e^{- i t x} \, dt.
$$
Conversely, if $\widehat{f}_{\vaf} \in L^{1}(\RR)$, then $\mu_{\vaf}$ has a density. If $G = \USp_{2 g}$, notice that Weyl's integration formulas supply the \emph{joint probability density function} for the random variables $(\theta_{1}, \dots, \theta_{g})$ and $(t_{1}, \dots, t_{g})$.

The distribution $\mu_{\vaf}$ is characterized by the sequence of its \emph{moments}
$$
M_{n}(\vaf) = \int_{I} x^{n} \, \mu_{\vaf}(x) = \int_{G} \vaf(m)^{n} \, dm, \qquad n \geq 1,
$$
and the characteristic function is a generating function for the moments:
\begin{equation}
\label{CaracMoments}
\widehat{f}_{\vaf}(t) =
\sum_{n = 0}^{\infty} M_{n}(\vaf) \frac{(i t)^{n}}{n!} \, .
\end{equation}

\begin{remark}
\label{standard}
If $\pi$ is an irreducible representation of $G$, with real character $\vat_{\pi}$, then the random variable $\vat_{\pi}$ is \emph{standardized}, \textit{i.e.} the first moment (the mean) is equal to zero and the second moment (the variance) is equal to one.
\end{remark}

\begin{remark}
\label{Leray}
Under suitable conditions, an expression of the density by integration along the fibers can be given. For instance, let $G = \USp_{2 g}$, let $\vaf \in \VAG$ be a $C^{\infty}$ function, and put $J = \vaf \circ h(U)$, where $U$ is the open box $]0, \pi[^{g}$. If $\vaf \circ h$ is a submersion on $U$, and if $x \in J$, then 
$$V_{x} = \set{\theta \in U}{\vaf \circ h(\theta) = x}$$
is a hypersurface. Let $\alpha_{x}$ be the \emph{Gelfand-Leray differential form} on $V_{x}$, defined by the relation
$$d(\vaf \circ h) \wedge \alpha_{x} = \delta_{g}(\theta) d\theta_{1} \wedge \dots \wedge d\theta_{g}.$$
For instance,
$$
\alpha_{x} = (-1)^{j - 1} (\partial(\vaf \circ h)/\partial \theta_{j} )^{-1} \delta_{g}(\theta)
d\theta_{1} \wedge \dots d\theta_{j - 1} \wedge d\theta_{j + 1} \dots \wedge d\theta_{g}
$$
if the involved partial derivative is $\neq 0$. Then the distribution is computed by slicing: since the cumulative distribution function is 
$$
\Phi_{\vaf}(x) =
\int_{\vaf \circ h(\theta) \leq x} \, \mu_{g} (\theta) =
\int_{- \infty}^{x} ds \int_{V_{s}} \alpha_{s} (\theta),
$$
we have
$$
f_{\vaf}(x) = \int_{V_{x}} \alpha_{x} (\theta).
$$
See \cite[Lemma 7.2]{Arnold} and \cite[Lemma 8.5]{Serre2012}.
\end{remark}

\section{Equidistribution}
\label{sec_equi}

Let $A$ be an abelian variety of dimension $g$ over $\FF_{q}$. The \emph{Weil polynomial} of $A$ is the characteristic polynomial $L(A, u) = \det(u.\I - F_{A})$ of the Frobenius endomorphism $F_{A}$ of $A$, and the \emph{unitarized Weil polynomial} of $A$ is
$$
\bar{L}(A,u) = L(A, q^{-1/2}u) =
\prod_{j = 1}^{g} (u - e^{i \theta_{j}}) (u - e^{- i \theta_{j}}).
$$
This polynomial has coefficients in $\ZZ$, belongs to the set $\Phi_{2 g}$ defined in Remark \ref{Phi2g}, and $\theta(A) = (\theta_{1}, \dots \theta_{g})$ is the \emph{sequence of Frobenius angles} of $A$. We write
$$
\bar{L}(A,u) = \sum_{n = 0}^{2 g} (- 1)^{n} a_{n}(A) u^{n},
$$
keeping in mind that $a_{2 g - n}(A) = a_{n}(A)$ for $0 \leq n \leq 2g$, since $\bar{L}(A,u) \in \Phi_{2 g}$. By associating to $A$ the polynomial $\bar{L}(A,u)$, each abelian variety defines, as explained in Section \ref{sec_intro}, a unique class $\dot{m}(A)$ in $\Cl G$, such that
$$
\bar{L}(A,u) = \det(u \I - \dot{m}(A)).
$$

Let $\mathsf{A}_{g}(\FF_{q})$ be the finite set of $k$-isomorphism classes of principally polarized abelian varieties of dimension $g$ over $k$. The following question naturally arises:
\begin{center}
\emph{As $q \rightarrow \infty$, and as $A$ runs over $\mathsf{A}_{g}(\FF_{q})$,
what are the limiting distributions of the random variables $a_{1}, \dots, a_{g}$ ?}
\end{center}
In order to clarify this sentence, we look in particular to the coefficient $a_{1}$, and focus on the Jacobians of curves. Let $C$ be a (nonsingular, absolutely irreducible, projective) curve over $\FF_{q}$. The \emph{Weil polynomial} $L(C,u)$ of $C$ is the Weil polynomial of its Jacobian, and similarly for the \emph{unitarized Weil polynomial} $\bar{L}(C,u)$, the \emph{sequence of Frobenius angles} $\theta(C)$, the coefficients $a_{n}(C)$, and the conjugacy class $\dot{m}(C)$. If $F_{C}$ is the geometric Frobenius of $C$, then
$$
\bar{L}(C,u) = \det(u.\I - q^{-1/2} F_{C}) = \det(u \I - \dot{m}(C)).
$$
Then
\begin{equation}
\label{Weil}
\card{C(\FF_{q})} = q + 1 - q^{1/2} \vat(C),
\end{equation}
where $\vat(C) = a_{1}(C)$, namely
$$
\vat(C) = q^{-1/2} \Trace F_{C} = 2 \sum_{j = 1}^{g} \cos \theta_{j},
$$
with $\theta(C) = (\theta_{1}, \dots, \theta_{g})$.

Then \emph{Katz-Sarnak theory} \cite{KS} models the behavior of the Weil polynomial of a random curve $C$ of genus $g$ over $\FF_{q}$ by postulating that when $q$ is large, the class $\dot{m}(C)$ behaves like a random conjugacy class in $\Cl G$, viewed as a probability space, endowed with the image $d\dot{m}$ of the mass one Haar measure. Here is an illustration of their results. Let $R(G)$ be the character ring of $G$ (cf. the appendix) and
$$
\mathscr{T}(G)^{\circ} = R(G) \otimes \CC \simeq \CC[2 \cos \theta_{1}, \dots 2 \cos \theta_{g}]^{\sym}
$$
the algebra of \emph{continuous representative central functions} on $G$, the isomorphism coming from Proposition \ref{Chevalley}. This algebra is dense in $\VAG$, hence, suitable for testing equidistribution on $\Cl G$. We use the following notation for the average of a complex function $f$ defined over a finite set $\mathsf{Z}$:
$$
\oint_{\ \mathsf{Z}} f(z) dz = 
\frac{1}{\card{\mathsf{Z}}} \sum_{z \in \mathsf{Z}} f(z).
$$
For every finite field $k$, we denote by $\mathsf{M}_{g}(k)$ the finite set of $k$-isomorphism classes of curves of genus $g$ over $k$. The following theorem follows directly, if $g \geq 3$, from \cite[Th. 10.7.15]{KS} (with a proof based on universal families of curves with a $3K$ structure), and from \cite[Th. 10.8.2]{KS} if $g \leq 2$ (with a proof based on universal families of hyperelliptic curves).

\begin{theorem}[Katz-Sarnak]
\label{KS}
Assume $g \geq 1$. If $C$ runs over $\mathsf{M}_{g}(\FF_{q})$, the conjugacy classes $\Dot{m}(C)$ become equidistributed in $\Cl G$ with respect to $d\Dot{m}$ as $q \rightarrow \infty$. More precisely, if $\vaf \in \mathscr{T}(G)^{\circ}$, then
$$
\oint_{\mathsf{M}_{g}(\FF_{q})} \vaf(\dot{m}(C))  dC =
\int_{\Cl G} \vaf(m) \, dm + O\left(q^{-1/2}\right).
\rlap \qed
$$
\end{theorem}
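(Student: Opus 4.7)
The plan is to use linearity to reduce to the case where $\vaf$ is an irreducible character $\chi_\rho$ of $G = \USp_{2g}$; these characters span $R(G) \otimes \CC = \mathscr{T}(G)^\circ$. The trivial representation gives the trivial identity $1 = 1$, and for every nontrivial irreducible $\rho$ Schur orthogonality kills $\int_{\Cl G} \chi_\rho(m)\, dm$, so everything reduces to proving that
$$
\oint_{\mathsf{M}_g(\FF_q)} \chi_\rho(\dot{m}(C)) \, dC = O(q^{-1/2}).
$$

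To prove this I would invoke standard $\ell$-adic sheaf machinery. Take a smooth quasi-projective parameter scheme $\mathcal{M}/\ZZ[1/\ell]$ carrying a universal family $f : \mathcal{C} \to \mathcal{M}$ of curves of genus $g$ with enough rigidifying structure to kill automorphisms (curves with a $3K$-structure when $g \geq 3$, hyperelliptic families when $g \leq 2$, as in Katz--Sarnak). Let $\mathcal{F} = R^1 f_* \QQ_\ell(\tfrac{1}{2})$: a rank $2g$ lisse sheaf, pure of weight $0$, symplectically self-dual, whose Frobenius conjugacy class at a point parametrizing $C$ is exactly $\dot{m}(C)$. Applying $\rho$ to the monodromy representation produces a weight-$0$ lisse sheaf $\rho(\mathcal{F})$ with Frobenius trace $\chi_\rho(\dot{m}(C))$ at $C$. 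The Grothendieck--Lefschetz trace formula then gives
$$
\sum_{C \in \mathcal{M}(\FF_q)} \chi_\rho(\dot{m}(C)) = \sum_{i=0}^{2 \dim \mathcal{M}} (-1)^{i} \Trace\bigl(\mathrm{Frob}_q \mid H^{i}_{c}(\mathcal{M} \otimes \overline{\FF}_q, \rho(\mathcal{F}))\bigr),
$$
and Deligne's Weil~II bounds each $H^i_c$ by $q^{i/2}$. Dividing by $\#\mathcal{M}(\FF_q) \sim q^{\dim \mathcal{M}}$, every term with $i < 2 \dim \mathcal{M}$ contributes $O(q^{-1/2})$. Passing from $\mathcal{M}(\FF_q)$ back to $\mathsf{M}_g(\FF_q)$ requires weighting by the (uniformly bounded) automorphism groups, which is absorbed in the same error.

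The only possible obstruction to equidistribution is then the top cohomology $H^{2 \dim \mathcal{M}}_c$, which by Poincar\'e duality equals $\rho(\mathcal{F})_{\pi_1^{\mathrm{geom}}}(-\dim \mathcal{M})$, the Tate-twisted coinvariants of $\rho(\mathcal{F})$ under the geometric monodromy. The crux of the proof is therefore the \emph{big monodromy} statement: the geometric monodromy of $\mathcal{F}$ fills all of $\mathrm{Sp}_{2g}$. Granting this, a nontrivial irreducible $\rho$ of $\mathrm{Sp}_{2g}$ has no invariants, so its coinvariants under the monodromy vanish, and the top cohomology disappears.

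The hard part is precisely this big monodromy theorem, which is the substantive content of Katz--Sarnak's Theorems 10.7.15 (for $g \geq 3$) and 10.8.2 (for $g \leq 2$). The strategy there is to exhibit vanishing cycles via Lefschetz pencils on the chosen universal family --- each vanishing cycle produces a symplectic transvection in the monodromy --- and then to invoke a group-theoretic classification of subgroups of $\mathrm{Sp}_{2g}(\FF_\ell)$ that contain enough transvections to force the monodromy to be the full symplectic group. Carrying out these Picard--Lefschetz computations for curves of arbitrary genus is the genuine technical burden; once that input is taken for granted, the trace-formula bookkeeping above yields the stated estimate uniformly in~$C$, with an implied constant depending on~$\rho$.
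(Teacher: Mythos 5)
The paper does not prove this theorem at all: it is stated with a terminal \(\square\) and attributed directly to \cite[Th.~10.7.15]{KS} (for $g \geq 3$, via universal families with $3K$-structure) and \cite[Th.~10.8.2]{KS} (for $g \leq 2$, via hyperelliptic families). Your sketch is a faithful outline of the argument carried out in those cited references --- reduction to nontrivial irreducible characters by Schur orthogonality, the Grothendieck--Lefschetz trace formula with Deligne's Weil~II bounds to kill the lower cohomology, and the big-monodromy theorem (produced via Lefschetz pencils, transvections, and the classification of subgroups of $\mathrm{Sp}_{2g}$ generated by transvections) to kill the top coinvariants --- and you correctly flag the monodromy computation as the genuine technical content. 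The one point glossed over is the passage from the rigidified moduli space $\mathcal{M}(\FF_q)$ back to the unweighted set $\mathsf{M}_g(\FF_q)$: in Katz--Sarnak this requires comparing the mass-weighted count $\sum 1/\#\mathrm{Aut}(C)$ with the plain count, and is handled in the cited theorems rather than being merely ``absorbed in the same error''; but as a sketch of the referenced proof, your outline is accurate.
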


Theorem \ref{KS} means that the counting measures
$$
\mu_{g,q} = \frac{1}{\card{\mathsf{M}_{g}(\FF_{q})}} \sum_{C \in \mathsf{M}_{g}(\FF_{q})} \delta_{(\Dot{m}(C))},
$$
defined on $\Cl G$, converges to $d\dot{m}$ in the weak topology of measures when $q \rightarrow \infty$. Since $$\vaf \circ \h(\theta(C)) = \vaf(\dot{m}(C)),$$
this theorem means also that if $C$ runs over $\mathsf{M}_{g}(\FF_{q})$, the vectors $\theta(C)$ become equidistributed in the fundamental alcove with respect to the Weyl measure when $q \rightarrow \infty$.

\begin{remark}
\label{KSAbelian}
In the preceding theorem, and the above comments, one can substitute the set $\mathsf{A}_{g}(\FF_{q})$ to the set $\mathsf{M}_{g}(\FF_{q})$ \cite[Th. 11.3.10]{KS}. This is an answer to the question raised in the beginning of this section.
\end{remark}

As discussed above, the random variable $\vat(C)$ rules the number of points on the set $\mathsf{M}_{g}(\FF_{q})$, and its law is the counting measure on the closed interval $[- 2g, 2g]$:
$$
\nu_{g,q} = 
\frac{1}{\card{\mathsf{M}_{g}(\FF_{q})}} \sum_{C \in \mathsf{M}_{g}(\FF_{q})} \delta_{(\vat(C))} =
\sum_{x = - 2 g}^{2 g} f_{g,q}(x) \delta_{(x)},
$$
where $\delta_{(x)}$ is the Dirac measure at $x$, with the probability mass function
$$
f_{g,q}(x) = \frac{\card{\set{C \in \mathsf{M}_{g}(\FF_{q})}{(\vat(C) = x}}}{\card{\mathsf{M}_{g}(\FF_{q})}},
$$
defined if $x \in [- 2g, 2g]$ and $q^{1/2}x \in \ZZ$. We put now
$$
\vat(m) = \Trace m, \qquad \vat \circ h(\theta) = 2 \sum_{j = 1}^{g} \cos \theta_{j},
$$
for $m \in G$ and $\theta \in X_{g}$. We take $\vaf(m) = \vat(m)$ in Theorem \ref{KS}, and call $\mu_{\vat}$ be the distribution of the central function $\vat$ as defined at the end of Section \ref{sec_WeilInt}. We obtain:

\begin{corollary}
\label{CorEquiNbPts}
If $q \rightarrow \infty$, the distributions $\nu_{g,q}$  of the Frobenius traces converge to the distribution $\mu_{\vat}$. More precisely, for any continuous function $\varphi$ on $[-2 g, 2 g]$, we have
$$
\oint_{\mathsf{M}_{g}(\FF_{q})} \varphi(\vat(C)) dC =
\int_{- 2 g}^{2 g} \varphi(x) \mu_{\vat}(x) + O\left(q^{-1/2}\right),
$$
and for every $x \in \RR$, we have
$$
\frac{\card{\set{C \in \mathsf{M}_{g}(\FF_{q})}{\vat(C) \leq x}}}{\card{\mathsf{M}_{g}(\FF_{q})}} =
\int_{- \infty}^{x} f_{\vat}(s) ds + O\left(q^{-1/2}\right).
\rlap \qed
$$
\end{corollary}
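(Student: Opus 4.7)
The plan is to apply Theorem \ref{KS} after reducing to polynomial test functions. First, observe that $\vat(m) = \Trace m$ is the character of the identity representation of $G = \USp_{2g}$, so $\vat \in R(G) \subset \mathscr{T}(G)^\circ$. Since $\mathscr{T}(G)^\circ$ is a $\CC$-subalgebra of $\VAG$, the composition $P \circ \vat$ belongs to $\mathscr{T}(G)^\circ$ for every polynomial $P \in \RR[X]$. Theorem \ref{KS} applied with $\vaf = P \circ \vat$ therefore yields
$$
\oint_{\mathsf{M}_{g}(\FF_{q})} P(\vat(C))\,dC = \int_{\Cl G} P(\vat(m))\,dm + O_{P}(q^{-1/2}),
$$
and by the definition of $\mu_{\vat}$ recalled at the end of Section \ref{sec_WeilInt}, the right-hand integral equals $\int_{-2g}^{2g} P(x)\,\mu_{\vat}(x)$.

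For the first (continuous) assertion, I would appeal to the Weierstrass approximation theorem: given $\varphi \in \mathscr{C}([-2g,2g])$ and $\varepsilon > 0$, pick a polynomial $P_{\varepsilon}$ with $\|\varphi - P_{\varepsilon}\|_{\infty} \le \varepsilon$. Since both the counting measure on $\mathsf{M}_{g}(\FF_{q})$ and $\mu_{\vat}$ are probability measures, replacing $\varphi$ by $P_{\varepsilon}$ in either integral introduces an error of at most $\varepsilon$, and the polynomial case (applied to a \emph{fixed} $P_{\varepsilon}$) then gives the claim with an $O$-constant depending on $\varphi$. For the second (cumulative) assertion I would sandwich the indicator: since $\mu_{\vat}$ has a bounded continuous density $f_{\vat}$, for any $\delta > 0$ there exist piecewise linear continuous functions $\varphi^{\pm}_{\delta}$ on $[-2g, 2g]$ with $\varphi^{-}_{\delta} \le \mathbf{1}_{(-\infty, x]} \le \varphi^{+}_{\delta}$ and $\int (\varphi^{+}_{\delta} - \varphi^{-}_{\delta})\,\mu_{\vat} \le \|f_{\vat}\|_{\infty}\,\delta$. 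Applying the first assertion to $\varphi^{\pm}_{\delta}$ squeezes the empirical proportion between $\int_{-\infty}^{x} f_{\vat}(s)\,ds \pm \|f_{\vat}\|_{\infty}\delta + O_{\delta}(q^{-1/2})$, which delivers the desired asymptotic.

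The main obstacle is that Theorem \ref{KS} does not record a quantitative dependence of its error term on $\vaf$. As a result the implied $O$-constant in the corollary must depend on $\varphi$ (through the degree of the polynomial approximant needed, hence on the modulus of continuity of $\varphi$) and, in the second assertion, on $x$ (through the sandwich parameter $\delta$ and the local size of $f_{\vat}$ near $x$). The stated rate $O(q^{-1/2})$ is thus valid only pointwise in $\varphi$ and in $x$, not uniformly; this is however sufficient for the intended consequence that the number-of-points statistic on $\mathsf{M}_{g}(\FF_{q})$ follows the law $\mu_{\vat}$ in the large-$q$ limit.
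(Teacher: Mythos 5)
Your reduction to polynomial test functions and Weierstrass approximation is the natural way to fill in what the paper leaves implicit (the paper simply writes ``We take $\vaf = \vat$ in Theorem \ref{KS}\dots We obtain:'' and ends the corollary with a \qed). The first half of the argument is fine: $\vat \in R(G)$, $\mathscr{T}(G)^{\circ}$ is a $\CC$-algebra, so $P \circ \vat \in \mathscr{T}(G)^{\circ}$ for every polynomial $P$, and Theorem \ref{KS} applies.

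However, the quantitative step is where your argument genuinely falls short, and the closing paragraph mischaracterizes the problem rather than resolving it. For a fixed continuous $\varphi$, the Weierstrass route gives
$$
\left| \oint_{\mathsf{M}_{g}(\FF_{q})} \varphi(\vat(C))\, dC - \int \varphi\, \mu_{\vat} \right| \le 2\varepsilon + C(P_{\varepsilon})\, q^{-1/2},
$$
and since $C(P_{\varepsilon}) \to \infty$ as $\varepsilon \to 0$ without any control on the rate, you cannot conclude $O_{\varphi}(q^{-1/2})$. Taking $\varepsilon$ fixed and letting $q \to \infty$ yields only $o(1)$; taking $\varepsilon = \varepsilon(q) \to 0$ destroys the $q^{-1/2}$ because $C(P_{\varepsilon(q)})$ is uncontrolled. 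So the approximation argument proves the headline qualitative convergence $\nu_{g,q} \to \mu_{\vat}$, but it does \emph{not} deliver the stated rate even pointwise in $\varphi$ (or in $x$ for the second display, for the same reason: the error $\|f_{\vat}\|_{\infty}\delta + O_{\delta}(q^{-1/2})$ cannot be optimized over $\delta$ without a bound on $C(\varphi^{\pm}_{\delta})$). Your assertion that ``the stated rate $O(q^{-1/2})$ is thus valid only pointwise in $\varphi$ and in $x$'' is therefore the wrong conclusion to draw from your own observation; what should be said is that the rate is not established by this argument at all. To actually obtain $O(q^{-1/2})$ one needs a quantitative input: either a discrepancy estimate in the style of Katz--Sarnak (their book proves Kolmogorov--Smirnov-type bounds directly, which gives the CDF statement immediately and, after summation by parts, the first display for Lipschitz $\varphi$), or explicit control of how $C(P)$ grows with the degree and the character-expansion coefficients of $P$, combined with a modulus-of-continuity hypothesis on $\varphi$. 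Neither of these is supplied by Theorem \ref{KS} as stated in the paper, so this gap is as much the paper's as yours --- but your write-up should flag it as a genuine missing quantitative ingredient rather than present the rate as following from the approximation.
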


\begin{lemma}
\label{AnMean}
If $1 \leq n \leq 2 g - 1$,
$$
\oint_{\mathsf{A}_{g}(\FF_{q})}
a_{n}(A) \, dA = \varepsilon_{n} + O\left(q^{-1/2}\right),
$$
where $\varepsilon_{n} = 1$ if $n$ is even and $\varepsilon_{n} = 0$ if $n$ is odd.
\end{lemma}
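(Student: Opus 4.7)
The plan is to identify each $a_n$ as the character of an exterior power of the standard representation of $G = \USp_{2g}$, apply the Katz--Sarnak equidistribution theorem (Theorem~\ref{KS} in the version for $\mathsf{A}_{g}(\FF_{q})$ guaranteed by Remark~\ref{KSAbelian}), and then evaluate the resulting integral on $G$ by Schur orthogonality.

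\textbf{Step 1: $a_n$ is a character.} Expanding
$$\bar L(A,u) = \prod_{j=1}^{g}(u - e^{i\theta_j})(u - e^{-i\theta_j}) = \sum_{k=0}^{2g}(-1)^k e_k\bigl(e^{i\theta_1},e^{-i\theta_1},\dots,e^{i\theta_g},e^{-i\theta_g}\bigr)\,u^{2g-k}$$
and matching with $\bar L(A,u) = \sum_{n=0}^{2g}(-1)^{n}a_n(A)u^{n}$ gives $a_{n}(A)=e_{2g-n}(\lambda)=e_{n}(\lambda)$ (the last equality by the palindromic identity $a_{2g-n}=a_n$). Hence, with $\pi\colon G\hookrightarrow\GL(V)$, $V=\CC^{2g}$, the identity representation,
$$a_n(A) = \Trace\bigl(\wedge^{n}\pi\bigr)(\dot m(A)),$$
so $a_n \in \mathscr{T}(G)^{\circ}$.

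\textbf{Step 2: Equidistribution.} Taking $\vaf = \chi_{\wedge^{n}\pi}$ in Theorem~\ref{KS}, with $\mathsf{A}_{g}(\FF_{q})$ replacing $\mathsf{M}_{g}(\FF_{q})$ as allowed by Remark~\ref{KSAbelian}, yields
$$\oint_{\mathsf{A}_{g}(\FF_{q})}\!\! a_n(A)\,dA \;=\; \int_{G}\chi_{\wedge^{n}\pi}(m)\,dm \;+\; O\!\left(q^{-1/2}\right).$$

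\textbf{Step 3: Computing the mean on $G$.} By Schur orthogonality, the remaining integral equals $\dim(\wedge^{n}V)^{G}$, the multiplicity of the trivial representation in $\wedge^{n}V$. If $n$ is odd, the central element $-\I_{2g}\in G$ acts on $\wedge^{n}V$ by $(-1)^{n}=-1$, so $(\wedge^{n}V)^{G}=0$. If $n$ is even with $0\leq n\leq 2g$, the $G$-invariant symplectic form $\omega$ corresponds via $V\cong V^{*}$ to an invariant $\omega^{\vee}\in\wedge^{2}V$, and $(\omega^{\vee})^{n/2}$ supplies a nonzero invariant in $\wedge^{n}V$; the standard decomposition $\wedge^{n}V=\bigoplus_{k\geq 0}\wedge^{n-2k}_{0}V$ into primitive parts (combined with the duality $\wedge^{n}V\cong\wedge^{2g-n}V$ when $n>g$) shows that this multiplicity is exactly $1$. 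Therefore $\int_{G}\chi_{\wedge^{n}\pi}(m)\,dm=\varepsilon_{n}$, and Step~2 delivers the conclusion.

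The only real obstacle is the identification $\dim(\wedge^{n}V)^{G}=\varepsilon_{n}$, but this is classical for $\USp_{2g}$; should one prefer, it can also be read off directly from the explicit formula for the character $\chi_{\wedge^{n}\pi}$ established in the appendix.
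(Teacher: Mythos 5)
Your proposal follows the same three-step strategy as the paper: identify $a_n$ with the character $\vat_n$ of $\wedge^n\pi$ (Step~1 $=$ equation \eqref{IdTraceExt}), invoke Theorem~\ref{KS} via Remark~\ref{KSAbelian} (Step~2 is verbatim the same), and then compute $\int_G \vat_n\,dm = \varepsilon_n$. The only divergence is in Step~3: the paper simply cites Lemma~\ref{WeightDecomp}, which gives the full decomposition of $\wedge^n\pi$ into fundamental irreducibles (sourced to Katz and Bourbaki), and reads off that $\mathbf{1}$ occurs with multiplicity $\varepsilon_n$. You instead argue directly: for odd $n$ via the central element $-\I_{2g}$ (a clean observation the paper does not make explicit), and for even $n$ via the power of the dual symplectic form together with the Lefschetz/primitive decomposition to pin down multiplicity exactly one, plus the duality $\wedge^n V\cong \wedge^{2g-n}V$ to cover $g<n\leq 2g-1$. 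This is correct, and it is slightly more self-contained in that you never need the identification of the primitive pieces with specific fundamental representations — but the Lefschetz decomposition you invoke is, of course, the same representation-theoretic fact underlying Lemma~\ref{WeightDecomp}, so the mathematical content of Step~3 is equivalent, just packaged differently.
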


\begin{proof}
As Equation \eqref{TraceExt} in the appendix, let
$$
\vat_{n}(m) = \text{Trace} (\wedge^{n} \, m),
$$
in such a way that $\vat_{1} = \vat$.
By equality \eqref{IdTraceExt}, we have
$$
a_{n}(A) = \vat_{n} \circ \h(\theta(A)).
$$
Since $\vat_{n} \in \mathscr{T}(G)^{\circ}$, we have, by Remark \ref{KSAbelian},
$$
\oint_{\mathsf{A}_{g}(\FF_{q})}
\vat_{n} \circ \h(\theta(A))  \, dA = \int_{G} \vat_{n}(m) \, dm + O\left(q^{-1/2}\right)
$$
but Lemma \ref{WeightDecomp} implies that the multiplicity of the character $\vat_{0}$ of the unit representation $\mathbf{1}$ is equal to $\varepsilon_{n}$, hence,
$$
\int_{G} \vat_{n}(m) \, dm = \varepsilon_{n}.
\mbox{\qedhere}
$$
\end{proof}

\begin{corollary}
\label{IntVA}
Let $u \in \CC$ and $q \rightarrow \infty$.
\begin{enumerate}
\item
\label{MVA1}
If $\card{u} < q^{1/2}$, then
$$
\oint_{\mathsf{A}_{g}(\FF_{q})} L(A, u) \, dA =
\frac{u^{2 g + 2} - q^{g + 1}}{u^{2} - q} + O\left(q^{g - \frac{1}{2}}\right).
$$
\item
\label{MVA2}
We have
$$
\oint_{\mathsf{A}_{g}(\FF_{q})} \card{A(\FF_{q})} \, dA = q^{g} + O\left(q^{g - 1}\right).
$$
\item
\label{MVA3}
We have
$$
\oint_{\mathsf{M}_{g}(\FF_{q})} \card{C(\FF_{q})}  \, dC = q + O(1).
$$
\end{enumerate}
The implied constants depend only on $g$.
\end{corollary}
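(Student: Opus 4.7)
The plan is to reduce all three claims to Lemma \ref{AnMean} by expanding the Weil polynomial in the coefficients $a_n(A)$. Since $L(A,u) = \det(u\I - F_A) = \prod_i(u - \alpha_i)$ with $|\alpha_i| = q^{1/2}$, the elementary symmetric functions of the $\alpha_i$ and those of the unitarized roots $\beta_i = q^{-1/2}\alpha_i$ differ by a factor $q^{k/2}$; combined with the palindromic relation $a_{2g-n} = a_n$, this yields
$$L(A,u) = \sum_{n=0}^{2g} (-1)^n q^{(2g-n)/2}\, a_n(A)\, u^n.$$
Averaging term by term, the endpoints $a_0(A) = a_{2g}(A) = 1$ contribute exactly $q^g$ and $u^{2g}$, and Lemma \ref{AnMean} controls the terms with $1 \le n \le 2g-1$; since $\varepsilon_n = 0$ for $n$ odd, only the even indices survive in the main term.

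For part (i), assembling the even-index contributions gives $\sum_{k=0}^{g} q^{g-k} u^{2k}$, which sums (as a finite geometric series) to $\frac{u^{2g+2} - q^{g+1}}{u^2 - q}$. Each error contribution is of size $O(q^{(2g-n-1)/2} |u|^n)$ for $1 \le n \le 2g-1$; under the hypothesis $|u| < q^{1/2}$, this is bounded uniformly by $q^{g-1/2}$, yielding the stated remainder.

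For part (ii), specialize to $u = 1$, using $|A(\FF_q)| = L(A,1)$. The main term becomes $\sum_{k=0}^{g} q^{g-k} = q^g + O(q^{g-1})$, and at $u=1$ each error term is $O(q^{(2g-n-1)/2})$ for $1 \le n \le 2g-1$, maximized at $n=1$ with value $O(q^{g-1})$. This sharper evaluation at $u=1$ (as opposed to the uniform $O(q^{g-1/2})$ that a naive specialization of (i) would yield) is the only non-mechanical point in the corollary; it requires no new input beyond Lemma \ref{AnMean}, just a careful re-examination of the dependence on $|u|$.

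For part (iii), the Weil identity \eqref{Weil} gives $|C(\FF_q)| = q + 1 - q^{1/2}\vat(C)$, so it suffices to show that $\oint_{\mathsf{M}_g(\FF_q)} \vat(C)\, dC = O(q^{-1/2})$. Since $\vat$ is the character of the identity representation of $G = \USp_{2g}$, Theorem \ref{KS} applied to $\vaf = \vat$ gives $\oint \vat(C)\, dC = \int_G \vat(m)\, dm + O(q^{-1/2})$, and the integral vanishes by Remark \ref{standard} (the identity representation being irreducible and nontrivial). Multiplying by $q^{1/2}$ absorbs the error into $O(1)$, and we conclude $\oint |C(\FF_q)|\, dC = q + O(1)$.
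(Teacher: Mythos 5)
Your proof is correct and follows essentially the same route as the paper: expand $L(A,u)=\sum_{n}(-1)^{n}a_{n}(A)q^{(2g-n)/2}u^{n}$, average term by term via Lemma \ref{AnMean}, sum the surviving even-index terms as a geometric series, and bound the error terms under $\card{u}<q^{1/2}$ (resp.\ $u=1$) to get $O(q^{g-1/2})$ (resp.\ $O(q^{g-1})$). The only cosmetic difference is in (iii), where you invoke Theorem \ref{KS} and Remark \ref{standard} directly instead of citing the $\mathsf{M}_{g}$-version of Lemma \ref{AnMean} for $a_{1}$; these are the same argument, since that lemma for $n=1$ is exactly the statement $\int_{G}\vat(m)\,dm=0$ plus Katz--Sarnak equidistribution.
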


\begin{proof}
We have
$$
L(A,u) = q^{g} \bar{L}\left(A,q^{-1/2} u\right) = \sum_{n = 0}^{2 g} (-1)^{n} a_{n}(A) q^{(2 g - n)/2} u^{n},
$$
with $a_{0} = 1$ and $a_{2 g - n} = a_{n}$ for $0 \leq n \leq g$. From Lemma \ref{AnMean}, we get
$$
q^{(2 g - n)/2} u^{n} \oint_{\ \mathsf{F}_{g}(\FF_{q})} a_{n}(A) \, dA =
\varepsilon_{n} q^{(2 g - n)/2} u^{n} +  u^{n} O\left(q^{(2 g - n - 1)/2}\right)
$$
for $1 \leq n \leq 2 g - 1$, and there is no second term in the right hand side if $n = 0$ and $n = 2 g$. Now, if $\card{u} < q^{1/2}$,
$$
\sum_{n = 0}^{2 g} \varepsilon_{n} q^{(2 g - n)/2} u^{n} = \frac{u^{2 g + 2} - q^{g + 1}}{u^{2} - q},
$$
and the absolute value of the difference between this expression and
$$
\oint_{\mathsf{A}_{g}(\FF_{q})} L(A, u) \, dA
$$
is bounded by
$$
B \sum_{n = 1}^{2 g - 1} \card{u}^{n} q^{(2 g - n - 1)/2},
$$
with $B$ depending only on $g$. If $\card{u} \leq q^{1/2}$, then $\card{u}^{n} q^{(2 g - n - 1)/2} \leq q^{(2 g - 1)/2}$, and this proves \eqref{MVA1}. If $\card{u} \leq 1$, then $\card{u}^{n} q^{(2 g - n - 1)/2} \leq q^{g - 1}$, hence,
$$
\oint_{\mathsf{A}_{g}(\FF_{q})} L(A, u) \, dA = q^{g} + O\left(q^{g - 1}\right).
$$
and this proves \eqref{MVA2}, since $\card{A(\FF_{q})} = L(A,1)$. Since Lemma \ref{AnMean} holds by substituting $\mathsf{M}_{g}$ to $\mathsf{A}_{g}$, \eqref{MVA3} is a consequence of this lemma applied to $a_{1}(C)$, and of formula \eqref{Weil}.
\end{proof}

With Corollary \ref{IntVA}\eqref{MVA1}, it appears as though the Frobenius angles were close in the mean to the vertices of the regular polygon with $(2 g + 2)$ vertices, inscribed in the circle of radius $q^{1/2}$, the points $\pm q^{1/2}$ being excluded.

\bigskip
\begin{figure}[ht!]
\centering
\boxed{\includegraphics[scale=0.5]{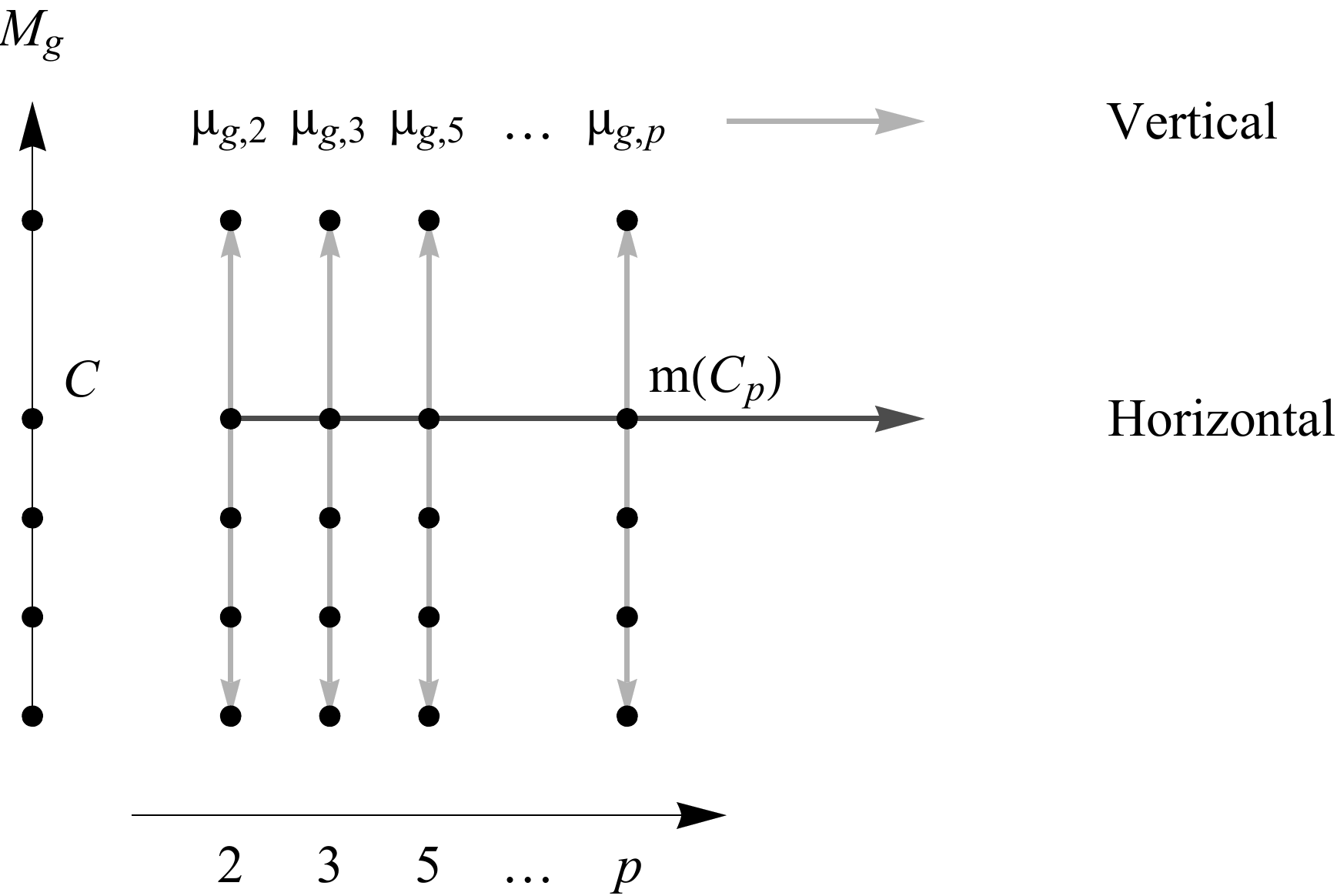}}
\caption{Horizontal versus vertical distribution.}
\label{VertHoriz}
\end{figure}
\bigskip

Another approach on the limiting equidistribution of Frobenius angles is the \emph{generalized Sato-Tate conjecture}, see \cite{Serre2012} for a comprehensive description. Let $C$ be an absolutely irreducible nonsingular projective curve of genus $g$ over $\QQ$, and $S$ a finite subset of prime numbers such that the reduction $C_{p} = C_{\FF_{p}}$ over $\FF_{p}$ is good if $p \notin S$. Then one says that the group $\USp_{2g}$ \emph{arises as the Sato-Tate group} of $C$ if the conjugacy classes $\Dot{m}(C_{p})$ are equidistributed  with respect to the Weyl measure of $G$ when $p \rightarrow \infty$. In other words, this means that if $\vaf \in \VAG$, then
$$
\lim_{n \rightarrow \infty} \oint_{P_{S}(n)} \vaf(\Dot{m}(C_{p})) \, dp =
\int_{G} \vaf(m) \, dm,
$$
where $P_{S}(n) = \set{p \in P \setminus S}{p \leq n}$. The case $g = 1$ is the Sato-Tate original conjecture, now a theorem. Here is an example of what one expects \cite{Ked-Su} :

\begin{conjecture}[Kedlaya-Sutherland]
If $\End_{\CC}(\Jac C) = \ZZ$, and if $g$ is odd, or $g = 2$, or $g = 6$, then the group $\USp_{2g}$ arises as the Sato-Tate group of $C$.
\end{conjecture}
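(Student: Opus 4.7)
The approach I would take decomposes the conjecture into an algebraic part, identifying the correct candidate group, and an analytic part, establishing equidistribution. First, attach to $C$ the family of $\ell$-adic Galois representations $\rho_{\ell} \colon \mathrm{Gal}(\bar{\QQ}/\QQ) \to \GL(H^{1}_{\ell}(C_{\bar{\QQ}}, \QQ_{\ell}))$. The \emph{Sato-Tate group} of $C$ is defined to be a maximal compact subgroup of the complex points of $G_{\ell}^{\mathrm{Zar}} \otimes_{\QQ_{\ell}} \CC$, where $G_{\ell}^{\mathrm{Zar}}$ is the Zariski closure of the image of $\rho_{\ell}$ restricted to a suitable open subgroup. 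By the Mumford-Tate conjecture for abelian varieties, this compact group is expected to coincide with the compact form of the Mumford-Tate group $\mathrm{MT}(\Jac C)$ of the rational Hodge structure on $H^{1}(C(\CC), \QQ)$. Establishing this identification is the first step; for $g = 2$ and for general $g$ when $\End_{\CC}(\Jac C) = \ZZ$, it is known through work of Serre, Chi, and Banaszak-Kedlaya-Weston.

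Second, under the hypothesis $\End_{\CC}(\Jac C) = \ZZ$, identify the Mumford-Tate group with the full symplectic similitude group. The principal polarization on $\Jac C$ furnishes a symplectic form, hence $\mathrm{MT}(\Jac C) \subset \mathbf{GSp}_{2 g}$, and the corresponding compact form is contained in $\USp_{2 g}$. The reverse inclusion amounts to ruling out proper connected reductive subgroups $H \subsetneq \mathbf{GSp}_{2 g}$ that act irreducibly on the standard $2 g$-dimensional representation and whose commutant is $\CC$. Serre's classification of such subgroups shows that the only exceptions come from tensor product decompositions and from orthogonal or exceptional groups acting through minuscule representations; a numerical check shows that none of these is compatible with a Hodge structure of type $(1,0) + (0,1)$ of dimension $2 g$ when $g$ is odd, $g = 2$, or $g = 6$. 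This is precisely the source of the arithmetic restriction in the conjecture, and it yields $\mathrm{MT}(\Jac C) = \mathbf{GSp}_{2 g}$, and hence a Sato-Tate group equal to $\USp_{2 g}$.

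Third, prove the actual equidistribution of the conjugacy classes $\Dot{m}(C_{p})$ with respect to the Haar measure on $\USp_{2 g}$. By the Peter-Weyl theorem and the density in $\VAG$ of the algebra $\mathscr{T}(G)^{\circ}$ recalled in Section \ref{sec_equi}, it suffices to show that for every nontrivial irreducible representation $\pi$ of $\USp_{2 g}$, with character $\chi_{\pi}$,
$$
\lim_{n \to \infty} \oint_{P_{S}(n)} \chi_{\pi}(\Dot{m}(C_{p})) \, dp = 0.
$$
By a standard Tauberian argument, this reduces to proving that the motivic $L$-function $L(s, \pi \circ \rho_{\ell})$ admits a meromorphic continuation to $\Real s \geq 1$, is holomorphic and nonvanishing on the line $\Real s = 1$, except possibly for a pole at $s = 1$ accounted for by the multiplicity of the trivial representation in $\pi \circ \rho_{\ell}$. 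This is the main obstacle: one must establish the potential automorphy of $\pi \circ \rho_{\ell}$ as $\pi$ ranges over all irreducible representations of $\USp_{2 g}$. For $g = 1$ this is the content of the Sato-Tate theorem of Barnet-Lamb, Geraghty, Harris, and Taylor, but for $g \geq 2$ only partial results are known, and the full analytic input needed here lies beyond current technology. Accordingly, I expect the conjecture to remain open pending further progress on potential automorphy of symplectic Galois representations.
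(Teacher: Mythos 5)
The statement you were given is not proved in the paper: it is presented as a \emph{conjecture}, attributed to Kedlaya and Sutherland, with a citation to \cite{Ked-Su} and no argument. Your proposal correctly treats it as such. You lay out the standard two-part framework: first, under the hypothesis $\End_{\CC}(\Jac C) = \ZZ$ together with the arithmetic restriction on $g$, the Mumford-Tate group of $\Jac C$ is forced to be all of $\mathbf{GSp}_{2g}$ (the restriction to $g$ odd, $g = 2$, or $g = 6$ is exactly what rules out, via Serre's classification, proper reductive subgroups carrying an irreducible symplectic representation of dimension $2g$), so the candidate Sato-Tate group is $\USp_{2g}$; second, equidistribution of the classes $\dot{m}(C_{p})$ with respect to Haar measure reduces, via Peter-Weyl and a Tauberian argument, to holomorphy and nonvanishing on $\Real s = 1$ of the $L$-functions $L(s, \pi\circ\rho_{\ell})$ for every nontrivial irreducible $\pi$ of $\USp_{2g}$, which in turn would follow from potential automorphy of these representations. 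You are right that this is precisely where the argument stops: except for $g = 1$ (the original Sato-Tate theorem), the required automorphy results are not available, and the statement remains a conjecture. Since the paper offers no proof, there is nothing to compare yours against; your assessment of the state of the problem and of the structure of any eventual proof is accurate.
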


The two preceding types of equidistribution are symbolically shown in Figure \ref{VertHoriz}. The sequence of prime numbers are on the horinzontal axis, and the vertical axis symbolizes the space of curves. The Katz-Sarnak approach is figured as a (horizontal) limit of vertical averages $\mu_{p}$ over vertical lines, and the Sato-Tate approach is a mean performed on horizontal lines.

\section{Expressions of the law of the trace in genus $2$}
\label{sec_expressions}

Assume now $g = 2$. Our purpose is to express the density of the distribution of the trace function $\vat$ on $\USp_{4}$ with the help of special functions. In order to do this, the first step is to compute the characteristic function. The density of the Weyl measure on $X_{2}$ is
$$
\delta_{2}(\theta_{1},\theta_{2}) =
\left(\frac{2}{\pi^{2}}\right) \sin^{2} \theta_{1} \sin^{2} \theta_{2} (2 \cos \theta_{2} - 2 \cos \theta_{1})^{2}.
$$
The fundamental alcove is
$$
A_{2} = \set{(\theta_{1}, \theta_{2}) \in X_{2}}{0 < \theta_{2} < \theta_{1} < \pi}.
$$
The maximum of $\delta_{2}$ in $A_{2}$ is attained at the point
$$
\theta_{m} = (\alpha_{m},\pi - \alpha_{m}), \
\text{where} \ \tan \frac{\alpha_{m}}{2} = \sqrt{2 + \sqrt{3}}, \quad
\delta(\theta_{m}) = \frac{128}{27 \pi^{2}}.
$$
We have $\vat \circ h(\theta_{1},\theta_{2}) = 2 \cos \theta_{1} + 2 \cos \theta_{2}$, and the characteristic function of $\vat$ is
$$
\widehat{f}_{\vat}(t) =  \int_{X_{2}} e^{2 i t (\cos \theta_{1} + \cos \theta_{2})}
\delta_{2}(\theta_{1},\theta_{2}) d\theta_{1} d\theta_{2}.
$$

\begin{proposition}
\label{Bessel}
For every $t \in \RR$, we have
$$
\widehat{f}_{\vat}(t) = \frac{4 J_{1}(2 t)^{2}}{t^{2}}
- \frac{6 J_{1}(2 t) J_{2}(2 t)}{t^{3}}
+  \frac{4 J_{2}(2 t)^{2}}{t^{2}}.
$$
\end{proposition}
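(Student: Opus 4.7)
The plan is to explicitly compute the double integral by factorising the integrand and then recognising each single-variable factor as a Bessel-type integral via the Poisson integral representation
\[
J_{\nu}(x)=\frac{(x/2)^{\nu}}{\sqrt{\pi}\,\Gamma(\nu+\tfrac{1}{2})}\int_{0}^{\pi}e^{i x\cos\theta}\sin^{2\nu}\theta\,d\theta.
\]
First I would expand the factor $(2\cos\theta_2-2\cos\theta_1)^{2}=4(\cos^{2}\theta_{1}-2\cos\theta_{1}\cos\theta_{2}+\cos^{2}\theta_{2})$ in $\delta_{2}$, and use that $e^{2it(\cos\theta_{1}+\cos\theta_{2})}=e^{2it\cos\theta_{1}}e^{2it\cos\theta_{2}}$ to split the double integral into a sum of products of one-dimensional integrals. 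Setting
\[
P_{k}(t)=\int_{0}^{\pi}\cos^{k}\theta\,e^{2it\cos\theta}\sin^{2}\theta\,d\theta\qquad(k=0,1,2),
\]
the symmetry of the two indices yields
\[
\widehat{f}_{\vat}(t)=\frac{8}{\pi^{2}}\bigl(2P_{0}(t)P_{2}(t)-2P_{1}(t)^{2}\bigr)
=\frac{16}{\pi^{2}}\bigl(P_{0}(t)P_{2}(t)-P_{1}(t)^{2}\bigr).
\]

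Next I would evaluate the three integrals $P_{k}(t)$ in closed form. The case $k=0$ is the Poisson integral with $\nu=1$ and $x=2t$, giving $P_{0}(t)=\pi J_{1}(2t)/(2t)$. For $k=2$ I would write $\cos^{2}\theta\sin^{2}\theta=\sin^{2}\theta-\sin^{4}\theta$ and apply the Poisson integral with $\nu=1$ and then $\nu=2$, using $\sqrt{\pi}\,\Gamma(5/2)=3\pi/4$, to obtain
\[
P_{2}(t)=\frac{\pi J_{1}(2t)}{2t}-\frac{3\pi J_{2}(2t)}{4t^{2}}.
\]
For $k=1$ I would integrate by parts with $u=e^{2it\cos\theta}$ and $dv=\cos\theta\sin^{2}\theta\,d\theta=d(\sin^{3}\theta/3)$; the boundary term vanishes and the remaining integral is again a Poisson integral for $J_{2}(2t)$, yielding
\[
P_{1}(t)=\frac{i\pi J_{2}(2t)}{2t}.
\]

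Finally, substituting these formulas into $\widehat{f}_{\vat}(t)=(16/\pi^{2})(P_{0}P_{2}-P_{1}^{2})$ and expanding gives
\[
\widehat{f}_{\vat}(t)=\frac{16}{\pi^{2}}\left(\frac{\pi^{2}J_{1}(2t)^{2}}{4t^{2}}-\frac{3\pi^{2}J_{1}(2t)J_{2}(2t)}{8t^{3}}+\frac{\pi^{2}J_{2}(2t)^{2}}{4t^{2}}\right),
\]
which simplifies to the stated expression. The only non-routine step is the integration-by-parts identity for $P_{1}$; alternatively one can obtain $P_{1}$ and $P_{2}$ by differentiating $P_{0}(t)$ in $t$ and applying the standard recurrence $\frac{d}{dx}[x^{-1}J_{1}(x)]=-x^{-1}J_{2}(x)$ together with $J_{3}(x)=(4/x)J_{2}(x)-J_{1}(x)$, which gives the same result. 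I anticipate that getting the $\Gamma$-function normalisations and factors of $2$ coming from the substitution $x=2t$ consistent is the only place where care is needed; no conceptual obstacle remains.
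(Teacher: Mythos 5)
Your argument is correct, and the computations check out. You take essentially the same approach as the paper: expand the Vandermonde factor $(2\cos\theta_2-2\cos\theta_1)^2$, split the exponential, and reduce $\widehat{f}_{\vat}$ to products of one–dimensional integrals which are then identified as Bessel functions via the Poisson integral representation. Your quantities $P_0,P_1,P_2$ are, up to a common normalisation, exactly the one-dimensional Fourier transforms $\widehat{V}_a,\widehat{V}_b,\widehat{V}_c$ in the paper (indeed $\widehat{V}_k(t)=\tfrac{2\sqrt{2}}{\pi}P_k(t)$), and your identity $\widehat{f}_{\vat}=\tfrac{16}{\pi^{2}}(P_0P_2-P_1^{2})$ matches the paper's $\widehat{f}_{\vat}=2\widehat{V}_a\widehat{V}_c-2\widehat{V}_b^{2}$ after collecting constants. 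Your version is arguably cleaner in its bookkeeping.
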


Here, $J_{1}$ and $J_{2}$ are \emph{Bessel functions}.

\begin{proof}
Let
$$
\begin{array}{llllll}
V_{a}(x) & = & 2^{5} \cos^{2}x \sin^{2}x & = & 8 \sin^{2} (2x) \\
V_{b}(x) & = & 2^{5} \cos^{2}x \sin^{2}x \cos 2x & = & 4 \sin 2 x \cos 4 x \\
V_{c}(x) & = & 2^{5} \cos^{2}x \sin^{2}x \cos^{2} 2x & = & 2 \sin^{2} 4 x \\
\end{array}
$$
Then
$$
32 \pi^{2} \delta(x,y) = V_{c}(x) V_{a}(y) + V_{a}(x) V_{c}(y) - 2 V_{b}(x) V_{b}(y).
$$
and
$$
\widehat{F}(t) = 2 \widehat{V}_{a}(t) \widehat{V_{c}}(t) - 2 \widehat{V_{b}}(t)^{2}.
$$
But
$$
\widehat{V}_{a}(t) = \dfrac{  \sqrt{2}}{t} \ J_{1}(2 t), \quad
\widehat{V_{b}}(t) = \dfrac{i \sqrt{2}}{t} \ J_{2}(2 t), \quad
\widehat{V_{c}}(t) = \dfrac{  \sqrt{2}}{t} \ J_{1}(2 t) - \dfrac{3}{\sqrt{2}t^{2}} J_{2}(2 t),
$$
and the result follows.
\end{proof}

We now compute the moments $M_{n}(\vat)$ of $\vat$. By Proposition \ref{Bessel}, the characteristic function can be expressed by a \emph{generalized hypergeometric series} \cite[\S 9.14, p. 1010]{GR}:
$$
\widehat{f}_{\vat}(t)
= {_{1}F_{2}}\left( \frac{3}{2} ; 3, 4; - 4 t^{2} \right)
= \sum_{n = 0}^{\infty} (-1)^{n}
\frac{(\frac{3}{2})_{n}}{(3)_{n} (4)_{n}} \,  2^{2 n} \, \frac{t^{2 n}}{n!},
$$
where $(a)_{n} = a (a + 1) \dots (a + n - 1)$ is the Pochhammer's symbol. It then follows from \eqref{CaracMoments} that the odd moments are equal to zero. Since
$$
\frac{(\tfrac{3}{2})_{n}}{(3)_{n} (4)_{n}} =
\frac{24}{\sqrt{\pi}}\frac{(n + \frac{1}{2})\Gamma(n + \frac{1}{2})}{\Gamma(n + 3)\Gamma(n + 4)}
$$
and \cite[p. 897]{GR}
$$
\Gamma\left(n + \frac{1}{2}\right) = \sqrt{\pi} \, 2^{- 2n} \, \frac{(2 n)!}{n!},
$$
we obtain
$$
M_{2 n}(\vat) = \frac{6.(2n)!(2 n + 2)!}{n!(n + 1)!(n + 2)!(n + 3)!} \quad \text{for} \ n \geq 0.
$$
One finds as expected Mihailovs' formula, in accordance with \cite[\S 4.1]{Ked-Su}, which includes another formula for $\widehat{f}_{\vat}(t)$, and also \cite[p. 126]{Serre2012}.

In what follows, four different but equivalent expressions for the distribution of $\tau$ are given.

\subsection{Hypergeometric series}
An expression of the density $f_{\vat}$ of the distribution of $\vat$ is the following. Recall that \emph{Gauss' hypergeometric series}
$$
{_{2}F_{1}}(a,b;c;z) = \sum_{n = 0}^{\infty} \frac{(a)_{n}(b)_{n}}{(c)_{n}} \, \frac{z^{n}}{n!}
$$
converges if $\card{z} < 1$ \cite[\S 9.1.0, p. 1005]{GR}.

\begin{theorem}
\label{hypergeometric}
If $\card{x} < 4$, we have
$$
f_{\vat}(x) = \frac{1}{4 \pi} \left(1 - \frac{x^{2}}{16}\right)^{4} \ {_{2}F_{1}}\left(\frac{3}{2}, \frac{5}{2} ; 5 ; 1 - \frac{x^{2}}{16} \right).
$$
\end{theorem}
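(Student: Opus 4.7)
The plan is to reduce $f_{\vat}$ to a one-dimensional elliptic-type integral via Weyl's integration formula in the $t$-variables (Proposition \ref{weylalg}), identify that integral with a Gauss hypergeometric function through its Euler representation, and then apply a Landen-type quadratic transformation to recast the natural argument as $\rho = 1 - x^{2}/16$.

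First I would use Proposition \ref{weylalg}: since $\vat \circ k(t) = t_{1} + t_{2}$ on $I_{2} = [-2,2]^{2}$, the density $f_{\vat}$ is the marginal of $\lambda_{2}$ along the line $t_{1} + t_{2} = x$. The change of variables $(x,y) = (t_{1} + t_{2},\, t_{1} - t_{2})$, combined with the evenness of the integrand in $y$ and of $f_{\vat}$ in $x$, allows one to reduce to $x \geq 0$ and gives
$$
f_{\vat}(x) \;=\; \frac{1}{32 \pi^{2}} \int_{0}^{a} y^{2} \sqrt{(a^{2} - y^{2})(b^{2} - y^{2})}\, dy, \qquad a = 4 - x,\; b = 4 + x.
$$
Rescaling $y = a u$ with $u \in [0,1]$ yields
$$
f_{\vat}(x) \;=\; \frac{a^{4} b}{32 \pi^{2}} \int_{0}^{1} u^{2} \sqrt{(1 - u^{2})(1 - \kappa^{2} u^{2})}\, du, \qquad \kappa = a/b.
$$

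Next, the substitution $t = u^{2}$ and the Euler integral representation
$$
{_{2}F_{1}}(\alpha,\beta;\gamma;z) \;=\; \frac{\Gamma(\gamma)}{\Gamma(\beta)\Gamma(\gamma - \beta)} \int_{0}^{1} t^{\beta - 1} (1 - t)^{\gamma - \beta - 1} (1 - z t)^{-\alpha}\, dt,
$$
applied with $(\alpha,\beta,\gamma) = (-1/2,\, 3/2,\, 3)$, identify the last integral as $\tfrac{\pi}{16}\,{_{2}F_{1}}(-\tfrac{1}{2}, \tfrac{3}{2}; 3; \kappa^{2})$, so
$$
f_{\vat}(x) \;=\; \frac{a^{4} b}{512\, \pi}\; {_{2}F_{1}}\!\left(-\tfrac{1}{2},\, \tfrac{3}{2};\, 3;\, \kappa^{2}\right).
$$

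The final step is to convert to the variable $\rho$. A direct computation shows $a = 8\kappa/(1+\kappa)$, $b = 8/(1+\kappa)$, $\rho = ab/16 = 4\kappa/(1+\kappa)^{2}$, and $a^{4} b = 128\, \rho^{4} (1+\kappa)^{3}$, so the theorem reduces to the quadratic transformation
$$
(1 + \kappa)^{3}\, {_{2}F_{1}}\!\left(-\tfrac{1}{2},\, \tfrac{3}{2};\, 3;\, \kappa^{2}\right) \;=\; {_{2}F_{1}}\!\left(\tfrac{3}{2},\, \tfrac{5}{2};\, 5;\, \tfrac{4 \kappa}{(1 + \kappa)^{2}}\right).
$$
Verifying this identity is the main obstacle of the proof. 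It is a variant of the classical Goursat--Landen quadratic transformations for ${_{2}F_{1}}$, and I would establish it either by noting that both sides, as analytic functions of $\kappa \in [0,1)$, satisfy the same second-order Fuchsian equation with matching value and derivative at $\kappa = 0$, or by comparing Taylor coefficients directly using the expansion of $4\kappa/(1+\kappa)^{2}$. Substituting the transformation into the previous displayed formula for $f_{\vat}(x)$ produces the stated identity on $\lvert x\rvert < 4$.
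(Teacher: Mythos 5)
Your derivation is correct and takes a genuinely different route from the paper. The paper proves Theorem \ref{hypergeometric} (via Lemma \ref{Distrib1}) by starting from the Bessel-function expression of the characteristic function (Proposition \ref{Bessel}), applying Schl\"afli's product formula and Gegenbauer's cosine-transform formula, reducing to a single integral $\int_x^4 (u^2-x^2)^{5/2}u^{-4}\sqrt{16-u^2}\,du$, and then using Euler's integral with the argument $m(x)=1-x^2/16$ directly, so the parameters $(3/2,5/2;5)$ appear immediately. You instead stay entirely in the $t$-variables of Proposition \ref{weylalg}: the diagonal/anti-diagonal change of variables gives the elliptic-type integral $\frac{1}{32\pi^2}\int_0^a y^2\sqrt{(a^2-y^2)(b^2-y^2)}\,dy$ with $a=4-x$, $b=4+x$ (this is exactly the computation hidden in Proposition \ref{DensTraceViete} specialized to $g=2$), which Euler's integral identifies as $\frac{a^4b}{512\pi}\,{_2F_1}(-\tfrac12,\tfrac32;3;\kappa^2)$ with $\kappa=a/b$. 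This avoids Bessel functions and cosine transforms entirely and is arguably more elementary; the cost is that the hypergeometric function comes out with different parameters and a different argument, so a quadratic transformation is needed at the end. I verified your intermediate formula (e.g.\ at $x=0$ it gives $64/(15\pi^2)$) and the algebraic relations $a=8\kappa/(1+\kappa)$, $b=8/(1+\kappa)$, $\rho=ab/16=4\kappa/(1+\kappa)^2$, $a^4b=128\rho^4(1+\kappa)^3$; all correct.

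The one place where you say ``the main obstacle'' and only sketch a strategy, namely
$$
(1+\kappa)^3\,{_2F_1}\!\left(-\tfrac12,\tfrac32;3;\kappa^2\right)={_2F_1}\!\left(\tfrac32,\tfrac52;5;\tfrac{4\kappa}{(1+\kappa)^2}\right),
$$
does not actually require a fresh ODE or Taylor-coefficient argument: it is precisely Abramowitz--Stegun 15.3.19,
$$
{_2F_1}(a,b;a-b+1;z)=(1+z^{1/2})^{-2a}\,{_2F_1}\!\left(a,\ a-b+\tfrac12;\ 2a-2b+1;\ \tfrac{4z^{1/2}}{(1+z^{1/2})^2}\right),
$$
with $a=3/2$, $b=-1/2$, $z=\kappa^2$ (so $a-b+1=3$, $a-b+\tfrac12=\tfrac52$, $2a-2b+1=5$), using the symmetry of ${_2F_1}$ in its first two parameters. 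Citing that identity closes the argument cleanly. One minor caution: the displayed constant $\lambda_2$ in the Example after Proposition \ref{weylalg} reads $\frac{1}{4\pi^2}$ but should be $\frac{1}{8\pi^2}$ according to the general formula $\frac{1}{(2\pi)^g g!}$; your formula $f_{\vat}(x)=\frac{1}{32\pi^2}\int_0^a y^2\sqrt{(a^2-y^2)(b^2-y^2)}\,dy$ is the one consistent with the correct normalization, so you implicitly used the Proposition rather than the Example, which is right.
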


This theorem immediately follows from the following lemma.

\begin{lemma}
\label{Distrib1}
If $\card{x} < 4$, we have
$$
f_{\vat}(x) = \frac{64}{5 \pi^{2}} m(x)^{4} I(m(x)), \quad \text{where} \quad m(x) = 1 - \frac{x^{2}}{16},
$$
and
$$
I(m) = \int_{0}^{1} t^{2} \left(\frac{1 - t^{2}}{1 - m t^{2}}\right)^{\frac{5}{2}} dt.
$$
Moreover
$$
I(m) = \frac{5 \pi}{256} \ {_{2}F_{1}}\left(\frac{3}{2}, \frac{5}{2} ; 5 ; m \right).
$$
\end{lemma}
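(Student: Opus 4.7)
The plan is to reduce the density to a one-variable integral via Weyl's formula, recognize that integral as an Euler representation of ${_2F_1}(-1/2, 3/2; 3; k^{2})$, and then convert it to ${_2F_1}(3/2, 5/2; 5; m)$ via a classical quadratic transformation of Gauss--Kummer type; the ``moreover'' clause and the final form $f_{\vat}(x) = (64/5\pi^{2}) m^{4} I(m)$ both follow from a second application of Euler's integral representation.

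First I apply Proposition~\ref{weylalg} to the test function $\varphi(\vat(m))$. The area-preserving substitution $t_{1} = x/2 - u$, $t_{2} = x/2 + u$ brings out the factorization $(4-t_{1}^{2})(4-t_{2}^{2}) = (W^{2}-u^{2})(a^{2}-u^{2})$ with $W = 2-|x|/2$ and $a = 2+|x|/2$; after a further scaling $u = 2r$, one arrives at
$$
f_{\vat}(x) = \frac{32}{\pi^{2}} \int_{0}^{R} r^{2} \sqrt{(R^{2}-r^{2})(Q^{2}-r^{2})}\,dr, \qquad R = 1 - |x|/4,\ Q = 1 + |x|/4,
$$
so that $RQ = m$. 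The substitution $s = r^{2}/R^{2}$ converts this into an Euler integral for ${_2F_1}$ with parameters $(-1/2, 3/2, 3)$; using $B(3/2,3/2) = \pi/8$, one obtains
$$
f_{\vat}(x) = \frac{2R^{4} Q}{\pi}\,{_2F_1}\!\left(-\tfrac12, \tfrac32; 3; k^{2}\right), \qquad k = R/Q.
$$

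Next I invoke the Gauss--Kummer quadratic transformation (applicable since $c = 2b$ holds in both hypergeometrics involved),
$$
{_2F_1}\!\left(\tfrac32, \tfrac52; 5; m\right) = \frac{8}{Q^{3}}\,{_2F_1}\!\left(-\tfrac12, \tfrac32; 3; k^{2}\right),
$$
valid with $Q = 1 + \sqrt{1-m}$ and $k = (1-\sqrt{1-m})/(1+\sqrt{1-m}) = R/Q$. Substituting into the previous display and using $R^{4} Q^{4} = m^{4}$ yields Theorem~\ref{hypergeometric}: $f_{\vat}(x) = (m^{4}/4\pi)\,{_2F_1}(3/2, 5/2; 5; m)$. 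A second appeal to Euler's integral representation of ${_2F_1}(5/2, 3/2; 5; m)$, combined with the elementary change of variable $y = t^{2}$ and the Beta factor $\Gamma(3/2)\Gamma(7/2)/\Gamma(5) = 5\pi/128$, establishes the ``moreover'' identity $I(m) = (5\pi/256)\,{_2F_1}(3/2, 5/2; 5; m)$. Combining, $f_{\vat}(x) = (m^{4}/4\pi)(256/5\pi)\,I(m) = (64/5\pi^{2})\, m^{4}\, I(m)$.

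The main technical hurdle is locating the exact form of the Gauss--Kummer quadratic transformation above: this specific instance is not among the most frequently tabulated identities, and must either be derived by composing the $c = 2b$ transformation
$$
{_2F_1}(a,b;2b;z) = (1-z)^{-a/2}\,{_2F_1}\!\left(\tfrac{a}{2}, b - \tfrac{a}{2}; b + \tfrac12; \tfrac{z^{2}}{4z-4}\right)
$$
with the transformation ${_2F_1}(a,b;a+b+\tfrac12; 4z(1-z)) = {_2F_1}(2a, 2b; a+b+\tfrac12; z)$ and a subsequent Pfaff transformation, or else verified directly by matching the first several Taylor coefficients in $k$ of both sides.
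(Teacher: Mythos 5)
Your proof is correct, and it follows a genuinely different route from the paper's. The paper's proof of Lemma~\ref{Distrib1} starts from the Bessel-function expression $\widehat{f}_{\vat}(t)$ of Proposition~\ref{Bessel}, uses Schl\"afli's product formula to write each $J_{\mu}J_{\nu}$ as a one-parameter integral, inverts the Fourier transform, and then applies Gegenbauer's cosine-transform formula to arrive at the single integral $\tfrac{1}{5\pi^{2}}\int_{x}^{4}\tfrac{(u^{2}-x^{2})^{5/2}}{u^{4}}\sqrt{16-u^{2}}\,du$, after which the change of variables $u = 4\sqrt{1-m t^{2}}$ gives the $I(m)$ form and Euler's integral closes the loop. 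You instead bypass the characteristic function entirely: you compute $f_{\vat}$ directly as a marginal of the joint density $\lambda_{2}$ from Proposition~\ref{weylalg}, by slicing along $t_{1}+t_{2}=x$, which after the sum/difference substitution and the factorization $(4-t_{1}^{2})(4-t_{2}^{2})=(W^{2}-u^{2})(a^{2}-u^{2})$ lands directly on an Euler integral for ${_2F_1}(-\tfrac12,\tfrac32;3;k^{2})$. You then need a quadratic transformation to convert to ${_2F_1}(\tfrac32,\tfrac52;5;m)$. The cleanest route is in fact the Landen-type identity
$$
{_2F_1}\!\left(a,b;2b;\tfrac{4z}{(1+z)^{2}}\right) = (1+z)^{2a}\,{_2F_1}\!\left(a,\,a-b+\tfrac12;\,b+\tfrac12;\,z^{2}\right),
$$
which with $a=\tfrac32$, $b=\tfrac52$, $z=k$ gives exactly $\tfrac{8}{Q^{3}}\,{_2F_1}(\tfrac32,-\tfrac12;3;k^{2})$ (using $1+k=2/Q$); this is a standard single identity (DLMF~15.8.15, Erd\'elyi~2.11(34)), so the chain you sketch in the last paragraph is not strictly necessary. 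What your approach buys is a more elementary derivation that avoids Schl\"afli, Gegenbauer and Fourier inversion, at the price of invoking one quadratic transformation of ${_2F_1}$; it is also, in effect, the verification the paper leaves implicit in Example~7.3 that the slicing integral over $\Sigma_{2}$ agrees with Theorem~\ref{Legendre}. Note that your logical order is reversed relative to the paper: you establish Theorem~\ref{hypergeometric} directly, prove the ``moreover'' identity by Euler's integral, and only then recover the first display of the lemma by combining the two; this is perfectly sound since the ``moreover'' clause is independent of the first identity.
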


\begin{proof}
We use a \emph{formula of Schl\"afli}, see \cite[Eq. 1, p. 150]{Watson}. If  $\mu$ and $\nu$ are real numbers, then
$$
J_{\mu}(t) J_{\nu}(t)
= \frac{2}{\pi} \int_{0}^{\pi/2}
J_{\mu + \nu}(2 t \cos \varphi) \cos(\mu - \nu) \varphi\, d\varphi \quad (\mu + \nu > - 1).
$$
As particular cases of this formula, we get
\begin{align*}
J_{1}(t)^{2}
& = \frac{2}{\pi} \int_{0}^{4}
J_{2}\left(\frac{u t}{2}\right) \frac{d u}{\sqrt{16 - u^{2}}} \, \\
J_{1}(t) J_{2}(t)
& = \frac{2}{\pi} \int_{0}^{4}
J_{3}\left(\frac{u t}{2}\right) \frac{u}{4} \, \frac{d u}{\sqrt{16 - u^{2}}} \, ,\\
J_{2}(t)^{2} 
& = \frac{2}{\pi} \int_{0}^{4}
J_{4}\left(\frac{u t}{2}\right)  \, \frac{d u}{\sqrt{16 - u^{2}}} \, .
\end{align*}
By transferring these equalities in Proposition \ref{Bessel}, we obtain
\begin{multline*}
\widehat{f_{\vat}}(t) 
= \frac{4}{t^{2}} J_{1}(2 t)^{2} - \frac{6}{t^{3}} J_{1}(2 t) J_{2}(2 t) +
\frac{4}{t^{2}} J_{2}(2 t)^{2} \\
= \frac{2}{\pi} \int_{0}^{4}
\left[\frac{4}{t^{2}} J_{2}(u t) - \frac{3 u}{2 t^{3}} J_{3}(u t)
+ \frac{4}{t^{2}} J_{4}(u t) \right] \frac{d u}{\sqrt{16 - u^{2}}} \, .
\end{multline*}
and since
$$
f_{\vat}(x) = \frac{1}{\pi} \int_{0}^{\infty} \widehat{f_{\vat}}(t)  \cos t x \, dt,
$$
we have
\begin{equation}
\label{IntDouble}
f_{\vat}(x) =  \frac{2}{\pi^{2}} \int_{0}^{4} \frac{d u}{\sqrt{16 - u^{2}}}
\int_{0}^{\infty}
\left[\frac{4}{t^{2}} J_{2}(u t) - \frac{3 u}{2 t^{3}} J_{3}(u t)
+ \frac{4}{t^{2}} J_{4}(u t) \right] \cos t x \, dt.
\end{equation}
We use now a \emph{formula of Gegenbauer} on the cosine transform, see \cite[p. 409]{MOS} and \cite[Eq. 3, p. 50]{Watson}. Assume $\Real \nu > - 1/2$, $u > 0$ and let $n$ be an integer $\geq 0$. If $0 < x < u$, then
$$
\int_{0}^{\infty} t^{-\nu} J_{\nu + 2 n}(u t) \cos t x \, dt =
(-1)^{n} 2^{\nu - 1} u^{- \nu}\frac{\Gamma(\nu)}{\Gamma(2 \nu + n)}  (u^{2} - x^{2})^{\nu - 1/2}
C_{2 n}^{\nu}\left(\frac{x}{u}\right),
$$
where $C_{n}^{\nu}(x)$ is the \emph{Gegenbauer polynomial}. If $u < x < \infty$, this integral is equal to $0$. From Gegenbauer's formula we deduce that if $0 < x < u$, then
$$
\int_{0}^{\infty} t^{-2} J_{2}(u t) \cos t x \, dt =
\frac{1}{3} \frac{(u^{2} - x^{2})^{3/2}}{u^{2}},
$$
$$
\int_{0}^{\infty} t^{-3} J_{3}(u t) \cos t x \, dt =
\frac{1}{15} \frac{(u^{2} - x^{2})^{5/2}}{u^{3}},
$$
$$
\int_{0}^{\infty} t^{-2} J_{4}(u t) \cos t x \, dt =
- \frac{1}{30} \frac{(u^{2} - x^{2})^{3/2}}{u^{2}} \left(\frac{12 x^{2}}{u^{2}} - 2\right),
$$
since $C_{2}^{2}(x) = 12 x^{2} - 2$. Transferring these relations in \eqref{IntDouble}, we get
\begin{multline*}
5 \pi^{2} f_{\vat}(x)  = 
16 \int_{x}^{4}
  \frac{(u^{2} - x^{2})^{3/2}}{u^{2}} \, \frac{d u}{\sqrt{16 - u^{2}}} \\
- \int_{x}^{4}
  \frac{(u^{2} - x^{2})^{5/2}}{u^{2}} \, \frac{d u}{\sqrt{16 - u^{2}}} 
- 16 x^{2} \int_{x}^{4}
  \frac{(u^{2} - x^{2})^{3/2}}{u^{4}} \, \frac{d u}{\sqrt{16 - u^{2}}} \, ,
\end{multline*}
and this leads to
$$
f_{\vat}(x) = \frac{1}{5 \pi ^{2}}
\int_{x}^{4}
\frac{(u^{2} - x^{2})^{5/2}}{u^{4}} \sqrt{16 - u^{2}} \, du.
$$
By the change of variables
$$
u = 4 \sqrt{1 - m(x) t^{2}}, \quad \text{where} \quad m(x) = 1 - \frac{x^{2}}{16}.
$$
we obtain
$$
f_{\vat}(x) = \frac{64 \, m(x)^{4}}{5 \pi ^{2}} \int_{0}^{1}
t^{2} \left(\frac{1 - t^{2}}{1 - m(x) t^{2}}\right)^{\frac{5}{2}} dt,
$$
which is the first result. Euler's integral representation of the hypergeometric series is
$$
{_{2}F_{1}}(a,b;c;z) = \frac{\Gamma(c)}{\Gamma(b)\Gamma(c - b)}
\int_{0}^{1}\frac{t^{b - 1} (1 - t)^{c - b - 1}}{(1 - t z)^{a}} \, dt
$$
if $\Real z >0$, and $\Real c > \Real b > 0$. From this we deduce, with the change of variables $t = u^{2}$, that
$$
I(m) = \frac{5 \pi}{256} \ {_{2}F_{1}}\left(\frac{3}{2}, \frac{5}{2} ; 5 ; m\right),
$$
which is the second result. 
\end{proof}

\subsection{Legendre function}

Another expression of $f_{\vat}$ is given by the \emph{associated Legendre function of the first kind} $\Legendre{b}{a}(z)$, defined in the half-plane $\Real z > 1$. If $a$ is not an integer $\geq 1$, and if $b > 3/2$, this function is defined by \cite[Eq. 8.702, p. 959]{GR} :
$$
\Legendre{b}{a}(z) =
\frac{1}{\Gamma(1 - a)} \left(\frac{z + 1}{z - 1}\right)^{\frac{a}{2}}
{_{2}F_{1}}\left(- b, b + 1; 1 - a ; \frac{1 - z}{2}\right).
$$
If $a = m$ is an integer  and if $z > 1$ is real, then \cite[Eq. 8.711.2, p. 960]{GR} :
$$
\Legendre{b}{m}(z) = \frac{(b + 1)_{a}}{\pi}
\int_{0}^{\pi} \left(z + \sqrt{z^{2} - 1} \cos \varphi\right)^{b} \cos m\varphi \, d\varphi.
$$
If $a = 0$, this is the \emph{Laplace integral}.

\begin{theorem}
\label{Legendre}
If $\card{x} < 4$, then
$$
f_{\vat}(x) = - \frac{64}{15 \pi} \, \sqrt{\card{x}} \left(1-\frac{x^2}{16}\right)^2
\Legendre{\frac{1}{2}}{2}\left(\frac{x^{2} + 16}{4 x}\right).
$$
\end{theorem}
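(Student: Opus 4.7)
The plan is to derive the Legendre-function expression directly from the hypergeometric expression of Theorem~\ref{hypergeometric} by recognising a hypergeometric representation of $\Legendre{1/2}{2}$ and applying a quadratic transformation. Because the superscript $a = 2$ is a positive integer, the defining formula recalled just before the theorem statement cannot be applied (it involves $1/\Gamma(1-a) = 1/\Gamma(-1)$), so we first re-express $\Legendre{1/2}{2}(z)$ using the Rodrigues-type representation
$$\Legendre{\nu}{m}(z) = (z^{2}-1)^{m/2}\,\frac{d^{m}P_{\nu}(z)}{dz^{m}}, \qquad z>1,\ m\in\ZZ_{\geq 0},$$
with $P_{\nu}(z) = {_{2}F_{1}}(-\nu,\nu+1;1;(1-z)/2)$. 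Differentiating the hypergeometric series twice in $z$ for $\nu=1/2$, $m=2$, and then applying Euler's transformation ${_{2}F_{1}}(a,b;c;u) = (1-u)^{c-a-b}\,{_{2}F_{1}}(c-a,c-b;c;u)$, one obtains the identity
$$\Legendre{1/2}{2}(z) \;=\; \frac{15}{32}\,\frac{1-z}{1+z}\,{_{2}F_{1}}\!\left(\frac{3}{2},-\frac{1}{2};3;\frac{1-z}{2}\right).$$

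Next, we specialise to $z=(x^{2}+16)/(4x)$. The factorisation $x^{4}+16x^{2}+256 = (x^{2}+4x+16)(x^{2}-4x+16)$ gives $z^{2}-1 = (x^{2}+4x+16)(x^{2}-4x+16)/(16x^{2})$, and one computes $(1-z)/2 = -(x^{2}-4x+16)/(8x)$ together with $(1-z)/(1+z) = -(x^{2}-4x+16)/(x^{2}+4x+16)$. The remaining step is to transform ${_{2}F_{1}}(3/2,-1/2;3;w)$ at the argument $w=-(x^{2}-4x+16)/(8x)$ into a multiple of ${_{2}F_{1}}(3/2,5/2;5;1-x^{2}/16)$. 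This is carried out by a Goursat-type quadratic transformation of Gauss's hypergeometric function, which links the parameter sets $(3/2,-1/2;3)$ and $(3/2,5/2;5)$ (note that $5 = 2\cdot(5/2)$, so the target function is of the type ${_{2}F_{1}}(a,b;2b;\cdot)$, for which several classical quadratic transformations are available). Combining this with the formula of Step~1 and Theorem~\ref{hypergeometric}, and collecting the algebraic prefactors, yields exactly the factor $-(64/(15\pi))\sqrt{\card{x}}\,(1-x^{2}/16)^{2}$.

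The main obstacle is locating the precise quadratic transformation and handling the branches of the square roots. Since $w = -(x^{2}-4x+16)/(8x) < 0$ for $x\in(0,4)$, the quantity $\sqrt{w}$ is purely imaginary, and the transformation must be interpreted by analytic continuation; the factor $\sqrt{\card{x}}$ and the overall sign in the prefactor $-64/(15\pi)$ arise from the branch choice. As a verification route, one can instead start from the integral representation
$$f_{\vat}(x) = \frac{1}{5\pi^{2}}\int_{x}^{4}\frac{(u^{2}-x^{2})^{5/2}\sqrt{16-u^{2}}}{u^{4}}\,du$$
obtained in the proof of Lemma~\ref{Distrib1}, and apply the substitution $u^{2} = 16x/(x+s)$ to convert it into an Euler integral of the form $\int_{0}^{1}t^{1/2}(1-t)^{5/2}(1+\lambda t)^{-5/2}\,dt$ with $\lambda = (16-x^{2})/x^{2}$. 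This integral evaluates to a multiple of ${_{2}F_{1}}(3/2,5/2;5;1-16/x^{2})$, which is then identified with the Legendre function $\Legendre{1/2}{2}((x^{2}+16)/(4x))$ via the identity of Step~1 after a change of variable.
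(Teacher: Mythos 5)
Your proposal traces the paper's chain of transformations in reverse, so it is not a genuinely different method, and as written it has a real gap at the central step. The paper begins from Theorem~\ref{hypergeometric}, applies the quadratic transformation from \cite[p.~51]{MOS}, namely ${_2F_1}\bigl(\tfrac{3}{2},\tfrac{5}{2};5;m\bigr)=(1-m)^{-3/4}\,{_2F_1}\bigl(\tfrac{3}{2},\tfrac{7}{2};3;-\tfrac{(1-\sqrt{1-m})^2}{4\sqrt{1-m}}\bigr)$, then the Euler transformation, arriving at ${_2F_1}\bigl(-\tfrac{1}{2},\tfrac{3}{2};3;-\tfrac{(x-4)^2}{16x}\bigr)$. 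Instead of invoking Rodrigues's formula to dodge the singular $\Gamma(1-a)$, the paper reads this off directly as $\Legendre{1/2}{-2}(z)$ (the hypergeometric definition is nonsingular for $a=-2$) and then converts to $\Legendre{1/2}{2}$ via the reflection formula $\Legendre{b}{m}=\tfrac{\Gamma(b+m+1)}{\Gamma(b-m+1)}\Legendre{b}{-m}$, which produces the factor $-\tfrac{15}{16}$ and hence the prefactor $-\tfrac{64}{15\pi}$. Your Rodrigues/differentiation/Euler computation leading to $\Legendre{1/2}{2}(z)=\tfrac{15}{32}\tfrac{1-z}{1+z}\,{_2F_1}\bigl(\tfrac{3}{2},-\tfrac{1}{2};3;\tfrac{1-z}{2}\bigr)$ is correct and is an honest substitute for that last move, but your Step~2 --- ``locating the precise quadratic transformation'' --- \emph{is} the proof, and you leave it unexecuted; the identity needed is exactly the one quoted above, run backwards.

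There is also a numerical point you inherited from a misprint in the displayed statement: the argument of the Legendre function should be $\tfrac{x^2+16}{8x}=\tfrac{1}{2}\bigl(\tfrac{x}{4}+\tfrac{4}{x}\bigr)$, not $\tfrac{x^2+16}{4x}$; with the latter one would get $f_{\vat}(0)=\tfrac{64\sqrt{2}}{15\pi^2}$ rather than the known value $\tfrac{64}{15\pi^2}$. Once this is corrected, $\tfrac{1-z}{2}=-\tfrac{(x-4)^2}{16x}$, the only square root entering the quadratic transformation is $\sqrt{1-m}=x/4$ (real and positive on $0<x<4$), and there is no branch ambiguity: your worry about $\sqrt{w}$ being imaginary is a symptom of the wrong argument, not a genuine obstacle. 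Your alternative verification via $u^{2}=16x/(x+s)$ does correctly produce a constant multiple of ${_2F_1}\bigl(\tfrac{3}{2},\tfrac{5}{2};5;1-16/x^{2}\bigr)$, which is simply the Pfaff transform of the hypergeometric in Theorem~\ref{hypergeometric}; but its final identification with $\Legendre{1/2}{2}$ again requires the very same quadratic transformation you have not written down.
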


\begin{proof}
By Theorem \ref{hypergeometric}, we have
$$
F(x) = \frac{1}{4 \pi} m(x)^{4} \ {_{2}F_{1}}\left(\frac{3}{2}, \frac{5}{2} ; 5 ; m(x)\right).
$$
But \cite[p. 51]{MOS}
$$
{_{2}F_{1}}\left(\frac{3}{2}, \frac{5}{2} ; 5 ; z\right) =
(1 - z)^{-3/4}
{_{2}F_{1}}\left(\frac{3}{2}, \frac{7}{2} ; 3 ; - \frac{(1 - \sqrt{1 - z})^{2}}{4 \sqrt{1 - z}}\right)
$$
and \cite[p. 47]{MOS}
$$
{_{2}F_{1}}\left(\frac{3}{2}, \frac{7}{2} ; 3 ; z\right) =
(1 - z)^{-2}
{_{2}F_{1}}\left(-\frac{1}{2}, \frac{3}{2} ; 3 ; z\right).
$$
On the other hand, if $z = m(x)$, then
$$
- \frac{(1 - \sqrt{1 - z})^{2}}{4 \sqrt{1 - z}} = - \frac{(x - 4)^{2}}{16 x} \, .
$$
By the definition of Legendre functions,
$$
\Legendre{\frac{1}{2}}{-2}\left(\frac{1}{2}\left(\frac{x}{4} + \frac{4}{x}\right)\right) = 
\left(\frac{x - 4}{x + 4}\right)^{4} \
{_{2}F_{1}}(-\frac{1}{2}, \frac{3}{2} ; 3 ; - \frac{(x - 4)^{2}}{16 x}),
$$
and this implies
$$
f_{\vat}(x) = \frac{4 }{\pi} \, \sqrt{x} \, \left(1-\frac{x^2}{16}\right)^2
\Legendre{\frac{1}{2}}{-2}\left(\frac{1}{2}\left(\frac{x}{4} + \frac{4}{x}\right)\right).
$$
Since
$$
\Legendre{b}{m}(z) =
\frac{\Gamma(b + m + 1)}{\Gamma(b - m + 1)} \, \Legendre{b}{-m}(z)
$$
if $m \in \ZZ$, we obtain the required expression.
\end{proof}

Since ${_{2}F_{1}}(a,b;c;0) = 1$, we deduce from Theorem \ref{hypergeometric} that
$$
f_{\vat}(x) = \frac{1}{4 \pi} \left(1 - \frac{x^{2}}{16}\right)^{4} + O(x - 4)^{5}
$$
and hence, in accordance with \cite[p. 126]{Serre2012}:

\begin{corollary}
If $\card{x} = 4 - \varepsilon$, with $\varepsilon \rightarrow 0$ and $\varepsilon > 0$, then
$$
f_{\vat}(x) = \frac{\varepsilon^{4}}{6 4 \pi} + O(\varepsilon^{5}). \rlap \qed
$$
\end{corollary}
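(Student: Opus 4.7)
The plan is to read off the result directly from the asymptotic expression for $f_{\vat}(x)$ already established in the sentence preceding the corollary, namely
$$
f_{\vat}(x) = \frac{1}{4\pi}\left(1 - \frac{x^{2}}{16}\right)^{4} + O\bigl((x-4)^{5}\bigr),
$$
which follows from Theorem \ref{hypergeometric} together with the normalization ${}_{2}F_{1}(a,b;c;0) = 1$ and the fact that the power series ${}_{2}F_{1}(3/2, 5/2; 5; z) = 1 + O(z)$ converges near $z = 0$. All that remains is an elementary expansion in $\varepsilon$.

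First I would write $|x| = 4 - \varepsilon$ with $\varepsilon > 0$ small, so that
$$
x^{2} = 16 - 8\varepsilon + \varepsilon^{2}, \qquad 1 - \frac{x^{2}}{16} = \frac{\varepsilon}{2} - \frac{\varepsilon^{2}}{16} = \frac{\varepsilon}{2}\left(1 - \frac{\varepsilon}{8}\right).
$$
Raising to the fourth power,
$$
\left(1 - \frac{x^{2}}{16}\right)^{4} = \frac{\varepsilon^{4}}{16}\left(1 - \frac{\varepsilon}{8}\right)^{4} = \frac{\varepsilon^{4}}{16} + O(\varepsilon^{5}).
$$
Since by the boxed display above the hypergeometric factor contributes only $1 + O(\varepsilon)$, multiplying by $\tfrac{1}{4\pi}$ yields
$$
f_{\vat}(x) = \frac{1}{4\pi}\cdot\frac{\varepsilon^{4}}{16} + O(\varepsilon^{5}) = \frac{\varepsilon^{4}}{64\pi} + O(\varepsilon^{5}),
$$
as claimed.

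There is no real obstacle here; the content of the corollary is essentially a restatement of the remark that preceded it, rewritten in the variable $\varepsilon = 4 - |x|$ rather than $x - 4$. The only thing to verify carefully is that the error term $O((x-4)^{5})$ in the expansion of $(1 - x^{2}/16)^{4}\,{}_{2}F_{1}(3/2, 5/2; 5; 1 - x^{2}/16)$ translates correctly to $O(\varepsilon^{5})$, which is immediate since $|x - 4|$ and $\varepsilon = 4 - |x|$ agree in absolute value (and the density is an even function of $x$, so the case $x = -4 + \varepsilon$ gives the same expansion).
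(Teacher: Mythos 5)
Your proposal is correct and follows the same route as the paper: expand $1-\tfrac{x^2}{16}$ in $\varepsilon = 4 - \card{x}$, use $(1-\tfrac{x^2}{16})^4 = \tfrac{\varepsilon^4}{16} + O(\varepsilon^5)$, and note that the hypergeometric factor ${}_2F_1(\tfrac32,\tfrac52;5;\,1-\tfrac{x^2}{16}) = 1 + O(\varepsilon)$, which contributes only to the error term. The arithmetic (in particular $1 - \tfrac{x^2}{16} = \tfrac{\varepsilon}{2}(1 - \tfrac{\varepsilon}{8})$ and the final constant $\tfrac{1}{64\pi}$) checks out, and your remark about even symmetry handling the $x<0$ case is a sensible addition.
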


Since
$$
\lim_{x \rightarrow 0} \sqrt{x} \,
\Legendre{\frac{1}{2}}{2}\left(\frac{1}{2}(\frac{x}{4} + \frac{4}{x})\right)
= - \frac{1}{\pi} \, ,
$$
we deduce from Proposition \ref{Legendre} that the maximum of $f_{\vat}$ is reached for $x = 0$, and
$$
f_{\vat}(0) = \frac{64}{15 \pi^{2}}  = 0.432 \dots
$$
The graph of $f_{\vat}$ is given in Figure \ref{curve} ; we recover the curve drawn in \cite[p. 124]{Ked-Su}.

\bigskip
\begin{figure}[ht!]
\centering
\boxed{\includegraphics[scale=0.5]{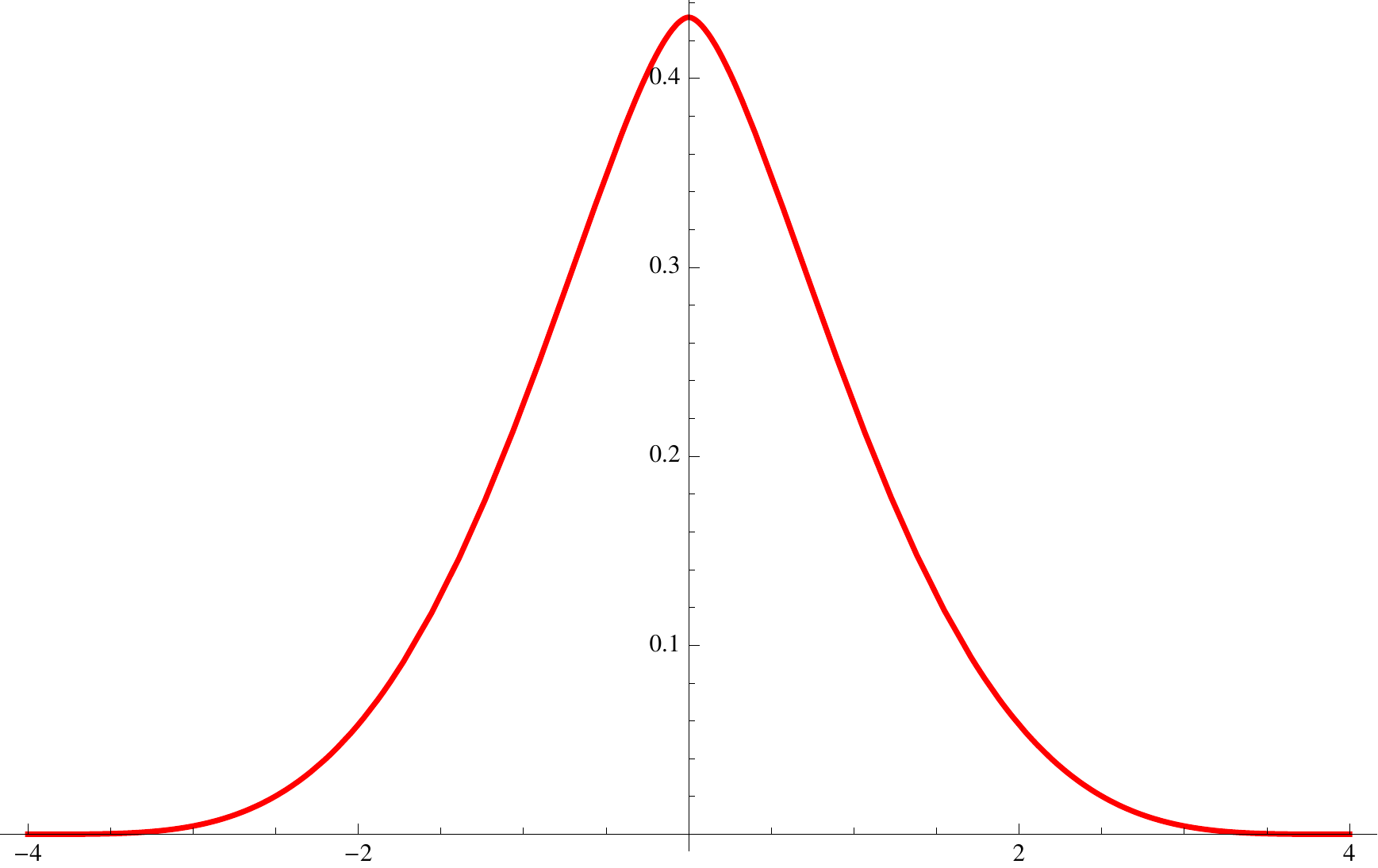}}
\caption{Density of the distribution of $\vat$, case $g = 2$.}
\label{curve}
\end{figure}
\bigskip

\subsection{Elliptic integrals}

Another expression of $f_{\vat}$ is given by \emph{Legendre elliptic integrals}. Let
$$
K(m) = \int_{0}^{\pi/2} \frac{d\varphi}{\sqrt{1 - m \sin^{2} \varphi}}, \quad
E(m) = \int_{0}^{\pi/2} \sqrt{1 - m \sin^{2} \varphi} \ d\varphi,
$$
be the Legendre elliptic integrals of first and second kind, respectively. The implementation of $f_{\vat}$ in the \emph{Maple} software gives:

\begin{corollary}
If $\card{x} < 4$, then
$$
\frac{15}{64} \pi ^{2} f_{\vat}(x) =
(m^{2} - 16 m + 16) E(m) - 8(m^{2} - 3 m + 2) K(m),
$$
where $m = 1 - (x^2/16)$. \qed
\end{corollary}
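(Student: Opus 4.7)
Combine Theorem \ref{hypergeometric} with the Gauss hypergeometric representations
\[
K(m) = \tfrac{\pi}{2} \cdot {_{2}F_{1}}\!\left(\tfrac{1}{2}, \tfrac{1}{2}; 1; m\right), \qquad E(m) = \tfrac{\pi}{2} \cdot {_{2}F_{1}}\!\left(-\tfrac{1}{2}, \tfrac{1}{2}; 1; m\right).
\]
The asserted formula is then equivalent to the hypergeometric identity
\[
\frac{15}{128}\, m^{4} \cdot {_{2}F_{1}}\!\left(\tfrac{3}{2}, \tfrac{5}{2}; 5; m\right) = (m^{2} - 16 m + 16) \cdot {_{2}F_{1}}\!\left(-\tfrac{1}{2}, \tfrac{1}{2}; 1; m\right) - 8(m^{2} - 3 m + 2) \cdot {_{2}F_{1}}\!\left(\tfrac{1}{2}, \tfrac{1}{2}; 1; m\right).
\]
The parameter triples $(3/2, 5/2, 5)$ and $(\pm 1/2, 1/2, 1)$ pairwise differ by integer shifts, so the three hypergeometric series lie in a common two-dimensional $\CC(m)$-vector space of solutions to the Legendre hypergeometric equation. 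An identity of this shape is therefore guaranteed to exist by the theory of contiguous ${_{2}F_{1}}$-functions, and the whole task is to pin down the explicit polynomial coefficients.

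\textbf{Route A: contiguous relations.} Apply the classical Gauss three-term contiguous relations to ${_{2}F_{1}}(3/2, 5/2; 5; m)$, each relation shifting one parameter by $\pm 1$ at the cost of a rational function of $m$. Since $(3/2, 5/2, 5) - (1/2, 1/2, 1) = (1, 2, 4)$, seven successive reductions suffice to write ${_{2}F_{1}}(3/2, 5/2; 5; m)$ as a $\QQ(m)$-linear combination of ${_{2}F_{1}}(1/2, 1/2; 1; m)$ and ${_{2}F_{1}}(-1/2, 1/2; 1; m)$; clearing denominators recovers the asserted polynomial coefficients $m^{2}-16m+16$ and $-8(m^{2}-3m+2)$.

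\textbf{Route B: common ODE plus coefficient matching.} More economical in practice: using the derivative identities $E'(m) = (E-K)/(2m)$ and $K'(m) = (E - (1-m)K)/(2m(1-m))$, express the right-hand side $g(m)$ and its first two derivatives as $\QQ(m)$-linear combinations of $\{K, E\}$, and eliminate $K$ and $E$ between $g, g', g''$ to produce a second-order linear ODE annihilating $g$. Independently, from $m(1-m) F'' + (5 - 5 m) F' - (15/4) F = 0$ for $F = {_{2}F_{1}}(3/2, 5/2; 5; m)$, derive the second-order ODE satisfied by $h(m) = (15/128)\, m^{4} F(m)$. Check that the two ODEs coincide. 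Then $g - h$ is annihilated by this ODE; a direct power-series calculation at $m = 0$ gives $g(m) = h(m) = (15/128)\, m^{4} + O(m^{5})$ — the coefficients of $m^{0},\ldots, m^{3}$ on the right-hand side cancel identically because of the specific polynomial prefactors — so $g - h$ vanishes to order $\geq 5$ at the regular singular point $m = 0$, which, together with the ODE, forces $g \equiv h$.

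\textbf{Main obstacle.} Neither route presents any conceptual difficulty — existence of the reduction is a general fact about holonomic hypergeometric families — but both are computationally heavy, which is why the corollary is stated as a consequence of a Maple implementation. The step most prone to error is the derivation of the common ODE in Route B (equivalently, the seven-step bookkeeping across contiguous relations in Route A); this is essentially a resultant computation in $\QQ(m)[\partial_{m}]$ and is best delegated to a symbolic algebra system.
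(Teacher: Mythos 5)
The paper offers no argument here at all: it simply states that ``the implementation of $f_{\vat}$ in the \emph{Maple} software gives'' the formula and closes the corollary with $\qed$. Your proposal therefore supplies a proof that the paper omits, and both of your routes are mathematically sound. After dividing by $\pi/2$, the desired identity becomes
$$\tfrac{15}{128}\, m^{4}\,{_{2}F_{1}}\!\left(\tfrac{3}{2},\tfrac{5}{2};5;m\right)
= (m^{2}-16m+16)\,{_{2}F_{1}}\!\left(-\tfrac{1}{2},\tfrac{1}{2};1;m\right)
- 8(m^{2}-3m+2)\,{_{2}F_{1}}\!\left(\tfrac{1}{2},\tfrac{1}{2};1;m\right),$$
and your observation that the three hypergeometric functions are mutually contiguous places the whole question in a two-dimensional $\CC(m)$-module closed under $d/dm$, which is exactly what makes both Route A (seven contiguous-relation steps) and Route B (common second-order ODE plus series matching at $m=0$) work. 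For Route B the indicial exponents of the ODE satisfied by $h(m)=\frac{15}{128}m^{4}\,{_2F_1}(\frac32,\frac52;5;m)$ at $m=0$ are $0$ and $4$, so vanishing of $g-h$ to order $\geq 5$ does indeed force $g\equiv h$ once the ODEs are shown to coincide; and the cancellations of the $m^{0},\dots,m^{3}$ coefficients of the right-hand side, together with the matching $m^{4}$ coefficient $15/128$, check out. Two small caveats worth recording: general contiguity theory only guarantees coefficients in $\CC(m)$, not polynomials, so the polynomial form is something one discovers rather than predicts; and in Route B the step ``check that the two ODEs coincide'' is not optional — if they differed, $g$ and $h$ could still satisfy a common higher-order equation without being equal, so the ODE coincidence (which does hold, and which a CAS confirms) is what closes the argument. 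In short, your proposal is correct and is more substantive than the paper's, which leaves the entire verification implicit in a Maple call.
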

The mention of the existence of such a formula is made in \cite{FKRS}.

\subsection{Meijer $G$-functions}

Another expression of $f_{\vat}$ is given by \emph{Meijer $G$-functions} \cite[\S 9.3, p. 1032]{GR}. They are defined as follows : take $z$ in $\CC$ with $0 < \card{z} < 1$ and $m,n,p,q$ in $\NN$. Then
\begin{multline*}
\Meijer{m}{n}{p}{q}{z}
{\begin{array}{c} a_{1}, \dots, a_{p} \\ b_{1}, \dots, b_{q}
\end{array}}
\\ =
\frac{1}{2 i \pi} \int_{C}
\frac{
\prod_{k = 1}^m     \Gamma(s + b_{k})}{
\prod_{k = n + 1}^p \Gamma(s + a_{k})}
. \frac{
\prod_{k = 1}^n     \Gamma(- s - a_{k} + 1)}{
\prod_{k = m + 1}^q \Gamma(- s - b_{k} + 1)} \, z^{- s} ds
\end{multline*}
Here, $a_{1}, \dots, a_{p}, b_{1}, \dots, b_{q}$ are \textit{a priori} in $\CC$, and $C$ is a suitable Mellin-Barnes contour.

\begin{corollary}
\label{MeijerLaw}
If $\card{x} < 4$,
$$
f_{\vat}(x) = \frac{6}{\pi} G\left(\frac{x^{2}}{16}\right), \quad \text{with} \quad
G(z) = \Meijer{2}{0}{2}{2}{z}
{\begin{matrix} \frac{5}{2}, & \frac{7}{2} \\ 0, & 1 \end{matrix}}.
$$
We have
$$
G(z) = \frac{1}{2 i \pi} \int_{\Real s = c}
\frac{\Gamma(s) \Gamma(s + 1)}
{\Gamma\left(s + \frac{5}{2}\right) \Gamma\left(s + \frac{7}{2}\right)} \, z^{- s} ds,
$$
with $0 < c < \frac{1}{2}$.
\end{corollary}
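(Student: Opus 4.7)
The plan is to compute the Mellin transform of $f_{\vat}$, viewed as a function of $z = x^2/16$ on $(0,1)$, and recognize it as $\frac{6}{\pi}$ times the Mellin transform of the Meijer $G$-function in the statement. Since that $G$-function is \emph{defined} by an inverse Mellin--Barnes integral, Mellin inversion closes the proof at one stroke.

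Start from Theorem~\ref{hypergeometric}: with $z = x^2/16 \in (0,1)$,
$$f_{\vat}(x) = \frac{1}{4\pi}(1-z)^4 \ {_{2}F_{1}}\!\left(\frac{3}{2}, \frac{5}{2}; 5; 1-z\right).$$
Setting $M(s) = \int_0^1 z^{s-1} f_{\vat}(4\sqrt{z}) \, dz$ and substituting $w = 1-z$ gives
$$M(s) = \frac{1}{4\pi}\int_0^1 w^4 (1-w)^{s-1} \ {_{2}F_{1}}\!\left(\frac{3}{2}, \frac{5}{2}; 5; w\right) dw.$$
Apply the classical identity
$$\int_0^1 w^{\rho-1}(1-w)^{\sigma-1} \ {_{2}F_{1}}(a, b; c; w) \, dw = \frac{\Gamma(\rho)\Gamma(\sigma)}{\Gamma(\rho+\sigma)} \ {_{3}F_{2}}(a, b, \rho; c, \rho+\sigma; 1),$$
valid for $\Real \rho, \Real \sigma > 0$, with $\rho = 5 = c$ and $\sigma = s$. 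The coincidence $c = \rho = 5$ cancels in the hypergeometric series, so the ${_{3}F_{2}}$ collapses to ${_{2}F_{1}}(3/2, 5/2; 5+s; 1)$, which is summed by Gauss's theorem as $\Gamma(5+s)\Gamma(1+s)/[\Gamma(7/2+s)\Gamma(5/2+s)]$. Since $\Gamma(5)/(4\pi) = 6/\pi$, collecting constants yields
$$M(s) = \frac{6}{\pi} \cdot \frac{\Gamma(s)\Gamma(s+1)}{\Gamma(s+5/2)\Gamma(s+7/2)}, \qquad \Real s > 0.$$

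This is exactly $\frac{6}{\pi}$ times the integrand that defines $G(z)$ in the statement. By Stirling the gamma-ratio decays like $|s|^{-6}$ on vertical lines, so the Mellin--Barnes integral converges absolutely; moreover the only pole of $M$ in $\Real s > -1$ is the simple pole of $\Gamma(s)$ at $s = 0$, so any vertical line with $0 < c < 1/2$ lies in the strip of analyticity. Mellin inversion on such a line recovers $f_{\vat}(4\sqrt{z}) = \frac{6}{\pi} G(z)$, and the corollary follows on substituting $z = x^2/16$. The only delicate point is the hypergeometric simplification, which is clean only thanks to the coincidence $c = \rho = 5$ allowing the reduction ${_{3}F_{2}} \rightsquigarrow {_{2}F_{1}}$ before Gauss's summation; everything else is bookkeeping.
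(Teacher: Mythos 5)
Your proof is correct, and it takes a genuinely different route from the paper's. The paper derives the result from Theorem~\ref{Legendre} (the Legendre-function expression), by invoking a closed-form identity from the Wolfram functions site that writes a $G^{2,0}_{2,2}$ with these parameters in terms of $\Legendre{b}{m}$, then matching constants. You instead start from Theorem~\ref{hypergeometric} (the ${_{2}F_{1}}$ expression) and compute the Mellin transform of $z\mapsto f_{\vat}(4\sqrt{z})$ on $(0,1)$ directly: the Euler beta integral against ${_{2}F_{1}}$ produces a ${_{3}F_{2}}(\cdot;1)$, which collapses to a ${_{2}F_{1}}(\cdot;1)$ because the parameter $\rho=5$ cancels against $c=5$, and Gauss's summation then evaluates it as a gamma quotient. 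The resulting Mellin transform $\frac{6}{\pi}\,\frac{\Gamma(s)\Gamma(s+1)}{\Gamma(s+5/2)\Gamma(s+7/2)}$ is precisely the integrand in the Mellin--Barnes definition of the stated $G^{2,0}_{2,2}$, so Mellin inversion (justified by the $|s|^{-6}$ Stirling decay on vertical lines) finishes the argument. Your route has the advantage of being essentially self-contained and of \emph{explaining} why a Meijer $G$ expression exists: the Mellin transform of the density is a ratio of gamma factors. The paper's route is shorter on the page but opaque, since it relies on looking up a nontrivial identity. Both proofs use a previously established theorem as input (Theorem~\ref{Legendre} for the paper, Theorem~\ref{hypergeometric} for you), so neither is circular. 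One minor point worth making explicit: $f_{\vat}$ is even (conjugation by $-\I\in\USp_4$ sends $\Trace m$ to $-\Trace m$), so establishing $f_{\vat}(4\sqrt{z}) = \tfrac{6}{\pi}G(z)$ for $z\in(0,1)$ does give the claim for all $|x|<4$; and continuity of both sides handles the point $z=0$.
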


\begin{proof}
If $\card{z} < 1$, then \cite[07.34.03.0653.01]{Wolfram}:
\begin{multline*}
G_{2,2}^{2,0}\left(z\left|
\begin{array}{c}
 a,c \\
 b,-a+b+c \\
\end{array}
\right.\right) \\ =
\frac{\sqrt{\pi}}{\Gamma (a-b)} (1-z)^{a-b-\frac{1}{2}} z^{\frac{1}{4} (-2 a+2 c-1)+b}
\mathfrak{P}_{-a+c-\frac{1}{2}}^{-a+b+\frac{1}{2}}\left(\frac{z+1}{2 \sqrt{z}}\right)
\end{multline*}
and the left hand side is equal to zero if $\card{z} > 1$. Hence, if $\card{z} < 1$,
$$
\Meijer{2}{0}{2}{2}{z}
{\begin{matrix} \frac{5}{2}, & \frac{7}{2}\\ 0, & 1 \end{matrix}} =
\frac{4}{3} (1 - z)^{2} z^{1/4} \ \mathfrak{P}_{1/2}^{-2}\left(\frac{z + 1}{2 \sqrt{z}}\right),
$$
and we apply Theorem \ref{Legendre}.
\end{proof}

\begin{corollary}
If $\card{x} < 4$, then the repartition function of $\vat$ is
$$
\Phi_{\vat}(x) = \frac{3 x}{\pi}
G\left(\frac{x^{2}}{16}\right)
+ \frac{1}{2} \, ,
$$
with
$$
G(z) = 
\Meijer{2}{1}{3}{3}{z}
{\begin{matrix} \frac{1}{2}, & \frac{5}{2}, & \frac{7}{2} \\ 0, & 1, &- \frac{1}{2} \end{matrix}} .
$$
\end{corollary}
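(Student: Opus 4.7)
The strategy is to integrate the density formula of Corollary \ref{MeijerLaw} and collapse the resulting indefinite integral via the standard Meijer $G$ integration identity. The first observation is that $f_{\vat}$ is an even function of $x$: indeed, Theorem \ref{hypergeometric} (or equivalently Corollary \ref{MeijerLaw}) exhibits $f_{\vat}(x)$ as a function of $x^{2}$, which also reflects the fact that $m \mapsto -m$ is a measure-preserving involution of $\USp_{2g}$ sending $\vat$ to $-\vat$. Consequently $\Phi_{\vat}(0)=\tfrac12$, and for $0<x<4$ it suffices to establish
\begin{equation*}
\int_{0}^{x} f_{\vat}(s)\,ds \;=\; \frac{3x}{\pi}\,
\Meijer{2}{1}{3}{3}{\frac{x^{2}}{16}}
{\begin{matrix} \frac{1}{2}, & \frac{5}{2}, & \frac{7}{2} \\ 0, & 1, & -\frac{1}{2} \end{matrix}};
\end{equation*}
the case $-4<x<0$ will then follow automatically since both sides are odd functions of $x$ (the right-hand side because $G(x^{2}/16)$ is even).

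Next, I would substitute $u=s^{2}/16$, so that $ds = 2\,u^{-1/2}\,du$ on the range $s>0$. Using Corollary \ref{MeijerLaw}, this converts the integral into
\begin{equation*}
\int_{0}^{x} f_{\vat}(s)\,ds \;=\; \frac{12}{\pi}\int_{0}^{x^{2}/16} u^{-1/2}\,
\Meijer{2}{0}{2}{2}{u}{\begin{matrix} \frac{5}{2}, & \frac{7}{2} \\ 0, & 1 \end{matrix}}\,du.
\end{equation*}
This is precisely the shape required for the classical primitive formula for Meijer $G$-functions, which in the form recorded in Gradshteyn--Ryzhik (cf.\ \cite[\S 7.811]{GR}) or Prudnikov--Brychkov--Marichev reads
\begin{equation*}
\int_{0}^{y} t^{\alpha-1}\,
\Meijer{m}{n}{p}{q}{t}{\begin{matrix} a_{1},\dots,a_{p} \\ b_{1},\dots,b_{q}\end{matrix}}\,dt
\;=\;
y^{\alpha}\,
\Meijer{m}{n+1}{p+1}{q+1}{y}{\begin{matrix} 1-\alpha,\,a_{1},\dots,a_{p} \\ b_{1},\dots,b_{q},\,-\alpha\end{matrix}}.
\end{equation*}

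Applying this with $\alpha=\tfrac12$, $(m,n,p,q)=(2,0,2,2)$, $(a_{1},a_{2})=(\tfrac{5}{2},\tfrac{7}{2})$ and $(b_{1},b_{2})=(0,1)$ raises the indices to $(m,n+1,p+1,q+1)=(2,1,3,3)$ and inserts the parameters $1-\alpha=\tfrac12$ (as a new top entry) and $-\alpha=-\tfrac12$ (as a new bottom entry), yielding exactly the parameter list in the statement. Since $y^{1/2}=x/4$ for $x>0$, the prefactor $\tfrac{12}{\pi}\cdot\tfrac{x}{4}=\tfrac{3x}{\pi}$ comes out correctly, and adding the initial value $\tfrac12$ finishes the proof.

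\textbf{Main obstacle.} The computation itself is short; the only delicate point is to apply the Meijer $G$ integration identity with the correct bookkeeping of parameters and to verify that the convergence hypotheses at $u=0$ are met for the integrand $u^{-1/2}G^{2,0}_{2,2}(u|\dots)$. The contour representation in Corollary \ref{MeijerLaw} (with $0<c<\tfrac12$) ensures that $G^{2,0}_{2,2}(u|\dots)=O(u^{0})$ near $0$, so that $u^{-1/2}G^{2,0}_{2,2}(u)$ is integrable at the origin and the formula applies without ambiguity.
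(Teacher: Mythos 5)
Your proof is correct and follows essentially the same route as the paper: both rely on the Meijer $G$ indefinite-integration identity (Wolfram 07.34.21.0003.01, equivalently the definite form you cite) applied to Corollary \ref{MeijerLaw} with $\alpha=\tfrac12$, producing the same $G^{2,1}_{3,3}$ primitive and the same prefactor $3x/\pi$. The only cosmetic difference is how the additive constant is pinned down — you use $\Phi_{\vat}(0)=\tfrac12$ via evenness of $f_{\vat}$, whereas the paper evaluates the primitive $\Phi_0$ at $x=-4$ to get $\Phi_0(-4)=-\tfrac12$ — and you are a bit more explicit about the change of variables $u=s^2/16$ and the integrability at the origin, both of which the paper leaves implicit.
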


\begin{proof}
According to \cite[07.34.21.0003.01]{Wolfram}, we have
$$
\int z^{\alpha - 1} \,
\Meijer{m}{n}{p}{q}{z}
{\begin{array}{c} a_{1}, \dots, a_{p} \\ b_{1}, \dots, b_{q}\end{array}} dz = 
z^{\alpha} \,
\Meijer{m}{n + 1}{p + 1}{q + 1}{z} 
{\begin{array}{c} 1 - \alpha, a_{1}, \dots, a_{p} \\ b_{1}, \dots, b_{q}, - \alpha \end{array}}.
$$
By Corollary \ref{MeijerLaw}, a primitive of $f_{\vat}$ is
$$
\Phi_{0}(x) = \frac{6}{\pi} \int
\Meijer{2}{0}{2}{2}{\frac{x^{2}}{16}}
{\begin{matrix} \frac{5}{2}, & \frac{7}{2} \\ 0, & 1 \end{matrix}} =
\frac{3 x}{\pi} 
\Meijer{2}{1}{3}{3}{\frac{x^{2}}{16}}
{\begin{matrix} \frac{1}{2}, & \frac{5}{2}, & \frac{7}{2} \\ 0, & 1, &- \frac{1}{2} \end{matrix}},
$$
and $\Phi_{0}(-4) = -1/2$.
\end{proof}

\bigskip
\begin{figure}[ht!]
\centering
\boxed{\includegraphics[scale=0.4]{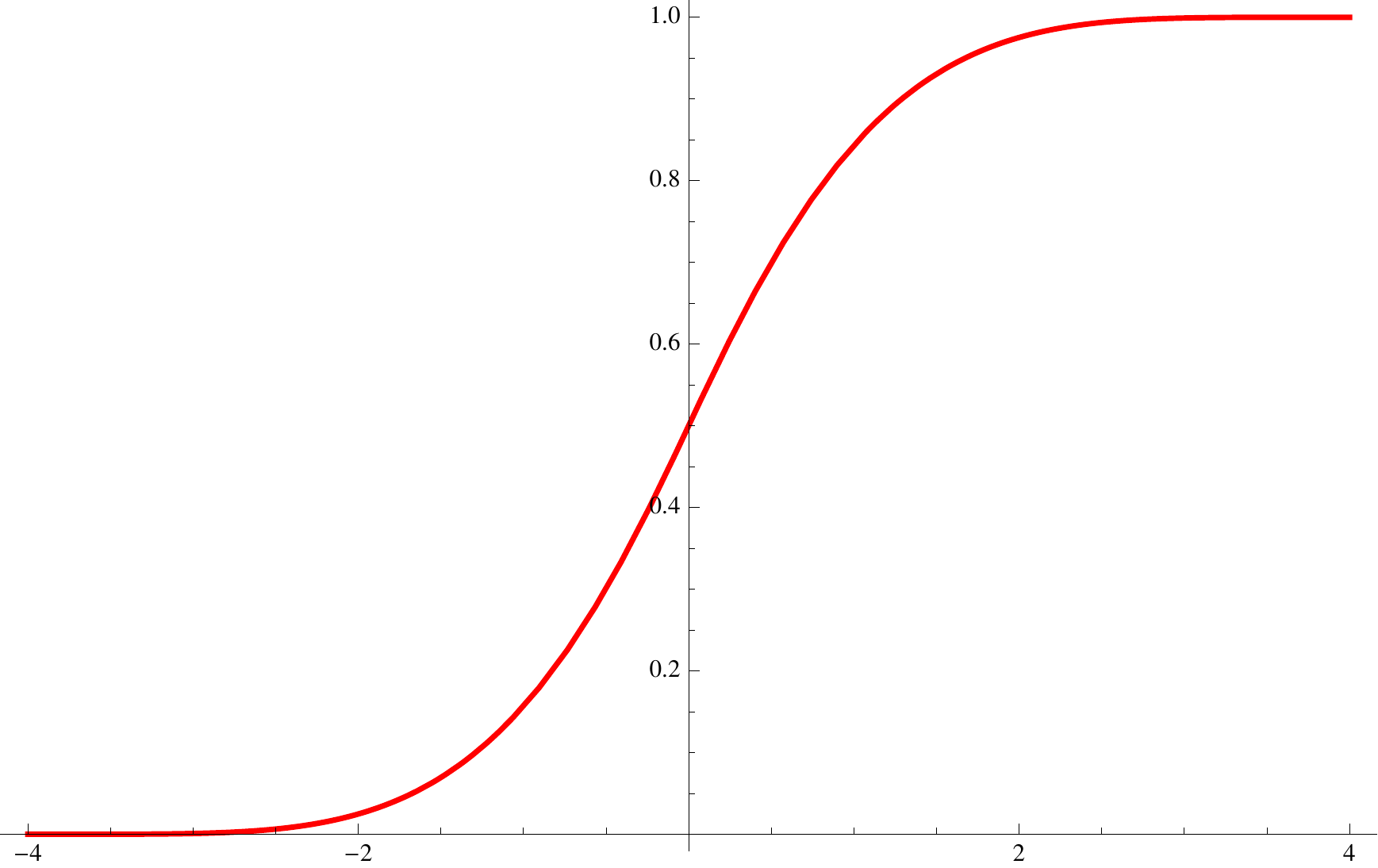}}
\caption{Repartition function of $\vat$}
\label{repartition}
\end{figure}
\bigskip

\subsection{The trace in $\SU_{2} \times \SU_{2}$}

In order to present a comparison with the above results, we give here without proof the distribution of the trace of a compact semi-simple subgroup of rank $2$ of $\USp_{4}$, namely, the group $\SU_{2} \times \SU_{2}$. If
$$
x = (x_{1}, x_{2}) \quad \text{and} \quad x_{i} =
\begin{pmatrix} a_{i} & - \bar{b}_{i} \\ b_{i} & \bar{a}_{i} \end{pmatrix} \in \SU_{2}, \quad
\card{a_{i}}^{2} + \card{b_{i}}^{2} = 1,
\quad i = 1,2,
$$
the map
$$
\pi(x) =
\begin{pmatrix}
a_{1} & 0     & - \bar{b}_{1} & 0 \\
0     & a_{2} & 0             & - \bar{b}_{2} \\
b_{1} & 0     & \bar{a}_{1}   & 0 \\
0     & b_{2} & 0             & \bar{a}_{2}  
\end{pmatrix}
$$
defines an embedding
$$
\begin{CD}
\pi : \SU_{2} \times \SU_{2} & @>>> & \USp_{4}
\end{CD}
$$
whose image contains the maximal torus $T$ of $\USp_{4}$. We put
$$
\boldsymbol{\rho}(x) = \Trace \pi(x).
$$
The characteristic function of $\boldsymbol{\rho}$ is the square of the characteristic function of the distribution of the trace of $\SU_{2}$:
$$
\widehat{f_{\boldsymbol{\rho}}}(t) = \frac{J_{1}(2 t)^{2}}{t^{2}}.
$$
The even moments are equal to zero, and the odd moments are
$$
M_{2 n}(\boldsymbol{\rho}) = C_{n} C_{n + 1} = \frac{2 (2 n)! (2 n + 1)!}{(n!)^{2} (n + 1)! (n + 2)!} \, .
$$
where
$$
C_{n} = \frac{1}{n + 1}\binom{2 n}{n}
$$
is the $n$th \emph{Catalan number}. One finds the sequence
$$
1, 0, 2, 0, 10, 0, 70, 0, 588, 0, 5544 \dots
$$
in accordance with the sequence A005568 in the OEIS \cite{Sloane}.

\begin{theorem}
\label{DistribG}
If $\card{x} < 4$, the density of the distribution of $\boldsymbol{\rho}$ is
$$
f_{\boldsymbol{\rho}}(x) = \frac{1}{2 \pi} \left(1 - \frac{x^{2}}{16}\right)^{2} \ {_{2}F_{1}}\left(\frac{1}{2}, \frac{3}{2} ; 3 ; 1 - \frac{x^{2}}{16} \right).
\rlap \qed
$$
\end{theorem}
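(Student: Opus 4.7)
The key observation is that $\Trace \pi(x_{1},x_{2}) = 2\Real a_{1}+2\Real a_{2} = \Trace x_{1} + \Trace x_{2}$, so under Haar measure on $\SU_{2}\times\SU_{2}$ the random variable $\boldsymbol{\rho}$ is the sum of two independent copies of the $\SU_{2}$-trace. This explains why its characteristic function equals the square $J_{1}(2t)^{2}/t^{2}$ of the $\SU_{2}$-trace characteristic function, as already recorded in the statement. The plan is to invert this Fourier transform, following the same three-step mechanism used in the proof of Lemma \ref{Distrib1}: apply Schl\"afli's integral for $J_{1}(2t)^{2}$, apply Gegenbauer's cosine-transform formula, and finally recognise the resulting one-variable integral as an Euler representation of a ${_{2}F_{1}}$.

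More precisely, I would first write Schl\"afli's identity in the form
$$
J_{1}(2t)^{2} = \frac{2}{\pi}\int_{0}^{4} J_{2}(ut)\,\frac{du}{\sqrt{16-u^{2}}},
$$
so that
$$
\widehat{f_{\boldsymbol{\rho}}}(t) = \frac{2}{\pi}\int_{0}^{4}\frac{J_{2}(ut)}{t^{2}}\,\frac{du}{\sqrt{16-u^{2}}}.
$$
Applying Fourier inversion $f_{\boldsymbol{\rho}}(x)=\pi^{-1}\int_{0}^{\infty}\widehat{f_{\boldsymbol{\rho}}}(t)\cos(tx)\,dt$ and exchanging the order of integration, I would next invoke Gegenbauer's formula (with $\nu=2$, $n=0$) exactly as in the proof of Lemma \ref{Distrib1} to evaluate
$$
\int_{0}^{\infty} t^{-2} J_{2}(ut)\cos(tx)\,dt = \frac{1}{3}\,\frac{(u^{2}-x^{2})^{3/2}}{u^{2}}
\qquad (0<x<u),
$$
and $0$ for $u<x$. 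This yields the one-variable integral representation
$$
f_{\boldsymbol{\rho}}(x) = \frac{2}{3\pi^{2}}\int_{x}^{4}\frac{(u^{2}-x^{2})^{3/2}}{u^{2}\sqrt{16-u^{2}}}\,du.
$$

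To convert this into the hypergeometric form, I would use the same substitution that worked in Lemma \ref{Distrib1}, namely $u = 4\sqrt{1-m(x)t^{2}}$ with $m(x)=1-x^{2}/16$. A direct computation shows that $u^{2}-x^{2} = 16m(1-t^{2})$, $\sqrt{16-u^{2}}=4\sqrt{m}\,t$, and $du/dt = -4mt/\sqrt{1-mt^{2}}$, so after cancellation,
$$
f_{\boldsymbol{\rho}}(x) = \frac{8\,m(x)^{2}}{3\pi^{2}}\int_{0}^{1}\left(\frac{1-t^{2}}{1-m(x)t^{2}}\right)^{\!3/2} dt.
$$
Finally, substituting $s=t^{2}$ and comparing with Euler's integral
$$
{_{2}F_{1}}(a,b;c;z) = \frac{\Gamma(c)}{\Gamma(b)\Gamma(c-b)}\int_{0}^{1} s^{b-1}(1-s)^{c-b-1}(1-sz)^{-a}\,ds
$$
with $(a,b,c)=(3/2,1/2,3)$ identifies the remaining integral with $\tfrac{3\pi}{16}\,{_{2}F_{1}}\!\left(\tfrac{1}{2},\tfrac{3}{2};3;m(x)\right)$, which produces exactly the stated density.

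The proof is essentially routine once the strategy of Lemma \ref{Distrib1} is imitated; the only real obstacle is book-keeping the numerical constants through the change of variables and the normalization of the Euler integral. No new analytic input (beyond the Schl\"afli, Gegenbauer, and Euler identities already exploited in Section \ref{sec_expressions}) is required.
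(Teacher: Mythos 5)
Your proof is correct; the paper states Theorem~\ref{DistribG} without proof, and your argument supplies exactly the missing verification by transposing the Schl\"afli--Gegenbauer--Euler mechanism of Lemma~\ref{Distrib1} from $\USp_{4}$ to $\SU_{2}\times\SU_{2}$. I checked the arithmetic: the substitution $u=4\sqrt{1-m(x)t^{2}}$ gives $u^{2}-x^{2}=16m(1-t^{2})$, $\sqrt{16-u^{2}}=4\sqrt{m}\,t$, and $du=-4mt(1-mt^{2})^{-1/2}dt$, which indeed reduces the integral to $\tfrac{8m^{2}}{3\pi^{2}}\int_{0}^{1}\bigl((1-t^{2})/(1-mt^{2})\bigr)^{3/2}dt$, and Euler's representation with $(a,b,c)=(3/2,1/2,3)$ and the values $\Gamma(1/2)\Gamma(5/2)/\Gamma(3)=3\pi/8$ give the prefactor $\tfrac{3\pi}{16}$, yielding $f_{\boldsymbol{\rho}}(x)=\tfrac{m(x)^{2}}{2\pi}\,{_{2}F_{1}}\!\left(\tfrac{1}{2},\tfrac{3}{2};3;m(x)\right)$ as stated. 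As an aside, since $\boldsymbol{\rho}$ is a sum of two independent $\SU_{2}$-traces, one could also obtain the result by directly convolving two semicircle densities, but your route has the advantage of being uniform with the $\USp_{4}$ computation already in the paper.
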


\begin{corollary}
If $\card{x} = 4 - \varepsilon$, with $\varepsilon \rightarrow 0$ and $\varepsilon > 0$, then
$$
f_{\boldsymbol{\rho}}(x) = \frac{\varepsilon^{2}}{8 \pi} - \frac{\varepsilon^{3}}{64 \pi} -
\frac{\varepsilon^{4}}{4096 \pi}+ O(\varepsilon^{5}).
\rlap \qed
$$
\end{corollary}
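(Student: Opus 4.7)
The plan is to derive the expansion directly from Theorem \ref{DistribG}, which gives the closed form
$$f_{\boldsymbol{\rho}}(x) = \frac{1}{2\pi}\Bigl(1 - \tfrac{x^2}{16}\Bigr)^{2}\,{_2F_1}\Bigl(\tfrac{1}{2},\tfrac{3}{2};3;1 - \tfrac{x^2}{16}\Bigr).$$
Setting $|x| = 4 - \varepsilon$ and $m = 1 - x^2/16$, a direct expansion gives $m = \varepsilon/2 - \varepsilon^2/16$, so $m$ is a small parameter as $\varepsilon \to 0$, and hence $m$ lies in the region where the hypergeometric series converges.

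The second step is to Taylor-expand ${_2F_1}(\tfrac{1}{2},\tfrac{3}{2};3;m)$ about $m=0$ to order $m^2$. Using
$$\frac{(\tfrac{1}{2})_n(\tfrac{3}{2})_n}{(3)_n\,n!}$$
one computes the first coefficients: $1$ at $n=0$, $\tfrac{1}{4}$ at $n=1$, and $\tfrac{15}{128}$ at $n=2$. Since the series is convergent for $|m|<1$, the tail contributes $O(m^3)=O(\varepsilon^3)$, and after multiplication by $m^2 = O(\varepsilon^2)$ this becomes $O(\varepsilon^5)$, which is within the stated error.

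The third step is the multiplication. Writing $m^2 = \tfrac{\varepsilon^2}{4} - \tfrac{\varepsilon^3}{16} + \tfrac{\varepsilon^4}{256}$, $m^3 = \tfrac{\varepsilon^3}{8} - \tfrac{3\varepsilon^4}{64} + O(\varepsilon^5)$ and $m^4 = \tfrac{\varepsilon^4}{16} + O(\varepsilon^5)$, one assembles
$$m^2\,{_2F_1}\Bigl(\tfrac{1}{2},\tfrac{3}{2};3;m\Bigr) = m^2 + \tfrac{1}{4}m^3 + \tfrac{15}{128}m^4 + O(\varepsilon^5),$$
and collecting terms by order gives $\tfrac{\varepsilon^2}{4}$ at order two, $\bigl(-\tfrac{1}{16} + \tfrac{1}{32}\bigr)\varepsilon^3 = -\tfrac{\varepsilon^3}{32}$ at order three, and $\bigl(\tfrac{1}{256} - \tfrac{3}{256} + \tfrac{15}{2048}\bigr)\varepsilon^4 = -\tfrac{\varepsilon^4}{2048}$ at order four. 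Dividing by $2\pi$ yields exactly the claimed expansion.

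There is no real obstacle here: the only subtlety is arithmetical bookkeeping of the low-order coefficients, and justifying that the remainder $O(m^3)$ in Taylor's theorem for ${_2F_1}$ (valid uniformly for $m$ in a compact neighborhood of $0$ inside the unit disk) translates to an $O(\varepsilon^5)$ error after multiplication by $m^2$. This is immediate since the hypergeometric series is absolutely convergent on $[0,1)$ and $m$ stays in a fixed compact subinterval of $[0,1)$ for $\varepsilon$ small.
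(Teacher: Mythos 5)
Your proof is correct and takes the same route the paper implicitly takes: the corollary is stated with no written proof precisely because it is a direct Taylor expansion of the closed form in Theorem \ref{DistribG} in the small parameter $m = 1 - x^2/16 = \varepsilon/2 - \varepsilon^2/16$. Your bookkeeping of $m^2$, $m^3$, $m^4$ and of the hypergeometric coefficients $1$, $\tfrac14$, $\tfrac{15}{128}$ is accurate, and the remainder estimate $m^2\cdot O(m^3)=O(\varepsilon^5)$ is justified exactly as you say, since $m\sim\varepsilon/2$ stays in a compact subinterval of $[0,1)$.
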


The maximum of $f_{\boldsymbol{\rho}}$ is reached for $x = 0$, and
$$
f_{\boldsymbol{\rho}}(0) = \frac{8}{3 \pi^{2}} = 0.270\dots
$$
The graph of $f_{\boldsymbol{\rho}}$ is given in Figure \ref{DessinLoiT}.

\begin{figure}[htbp]
\boxed{\includegraphics[scale=0.7]{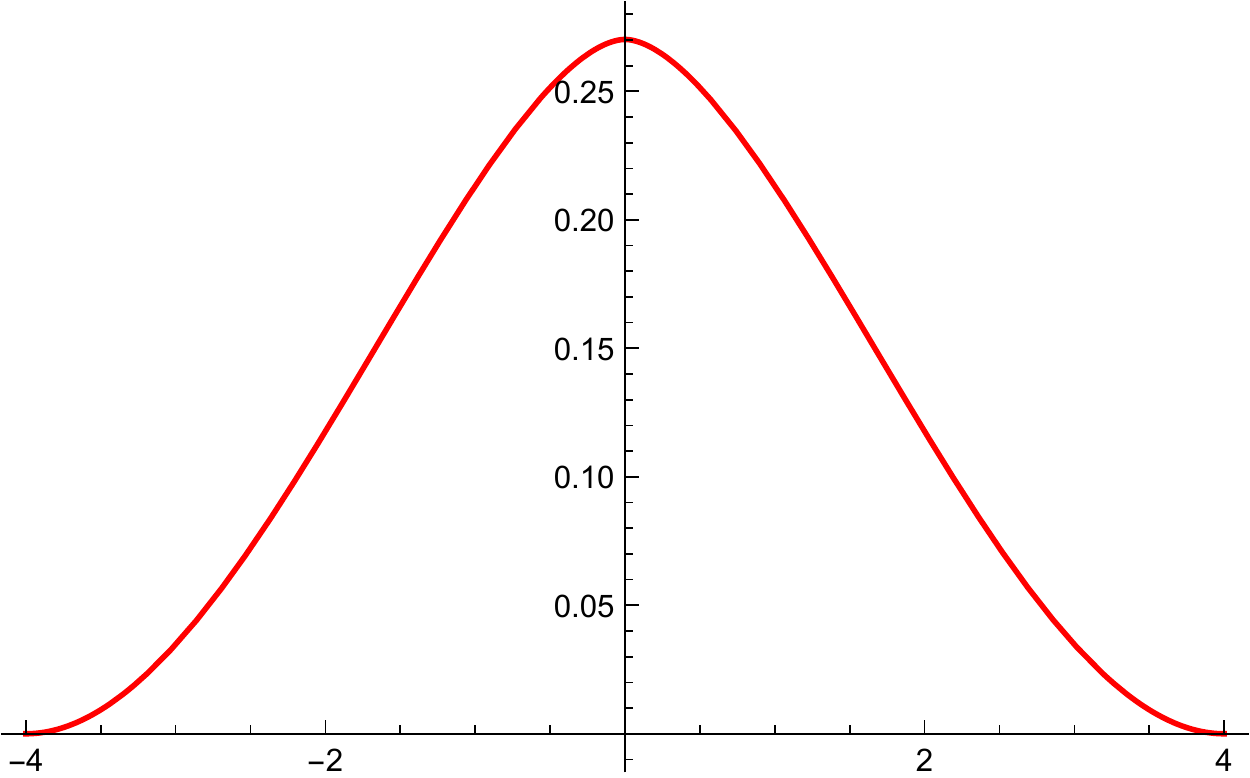}}
\caption{Density of the distribution of $\boldsymbol{\rho}$.}
\label{DessinLoiT}
\end{figure}

\section{The Vi\`ete map and its image}
\label{sec_VieteMap}

Another approach of the distribution of the trace is realized by an algebraic form of Weyl's integration formula, using symmetric polynomials. This comes from a general program developed by Kohel \cite{Kohel}, formerly outlined by DiPippo and Howe in \cite{Howe}. If $t = (t_{1}, \dots, t_{g}) \in \CC^{g}$, consider a monic polynomial
\begin{equation}
\label{roots}
h_{t}(u) = (u - t_{1}) \dots (u - t_{g}) = u^{g} - s_{1}(t) u^{g - 1} + \dots + (-1)^{g} s_{g}(t)
\end{equation}
in $\CC[u]$. Here
$$s_{n}(t) = \sum_{i_{1} < \dots < i_{n}} t_{i_{1}} \dots t_{i_{k}}$$
is the \emph{elementary symmetric polynomial} of degree $n$ in $g$ variables. The discriminant of $h_{t}$ is
\begin{equation}
\label{D0disc}
\disc h_{t} = D_{0}(t) = \prod_{j < k}(t_{k} - t_{j})^{2}.
\end{equation}
The \emph{Vi\`ete map} $\mathsf{s}: \CC^{g} \longrightarrow \CC^{g}$ is the surjective polynomial mapping
$$
\mathsf{s}(t_{1}, \dots, t_{g}) = (s_{1}(t), \dots, s_{g}(t)),
$$
where $t = (t_{1}, \dots, t_{g})$, inducing a bijection
$$
\begin{CD}
\CC^{g}/\mathfrak{S}_{g} & @>{\sim}>> & \CC^{g}
\end{CD}
$$
which is a homeomorphism, because the map between the corresponding projective spaces is a continuous bijection between compact spaces. Hence, the Vi\`ete map is open and proper. We denote by
$$
\Pi_{g} = \mathsf{s}(\RR^{g})
$$
the closed subset which is the image of the Vi\`ete map. Hence, $(s_{1}, \dots, s_{g}) \in \Pi_{g}$ if and only $h_{t}(u)$ has only real roots. The induced map
$$
\begin{CD}
\RR^{g}/\mathfrak{S}_{g} & @>{\sim}>> & \Pi_{g}
\end{CD}
$$
is a homeomorphism. The \emph{fundamental chamber} of $\RR^{g}$ related to $\mathfrak{S}_{g}$ is
$$
C_{g} = \set{t \in \RR^{g}}{t_{1} < t_{2} < \dots < t_{g}}
$$
and $\bar{C}_{g}$ is a fundamental domain for $\mathfrak{S}_{g}$ in $\RR^{g}$. We are going to show that $\mathsf{s}$ is a local diffeomorphism at the points of an open dense subset of $\RR^{g}$. For this purpose, we calculate $J(\mathsf{s})$, where $J(\mathsf{f})$ denotes the jacobian matrix of a polynomial map $\mathsf{f}: \CC^{g} \longrightarrow \CC^{g}$. Recall that the power sums
$$
p_{n}(t) = t_{1}^{n} + \dots + t_{g}^{n} \quad  (n \geq 1)
$$
can be expressed in terms of elementary symmetric polynomials. Precisely, from Newton's relations
$$
p_{n} = \sum_{j = 1}^{n - 1} (- 1)^{j - 1} s_{j} p_{n - j} + (- 1)^{n - 1} n s_{n} \quad (n \geq 1),
$$
we obtain \cite[p. 28]{Macdonald} :
$$
p_{n} = \begin{vmatrix}
s_{1}   & 1         & 0         & \dots & 0 \\
2 s_{2} & s_{1}     & 1         & \dots & 0 \\
\dots   & \dots     & \dots     & \dots & \dots \\
n s_{n} & s_{n - 1} & s_{n - 2} & \dots & s_{1}
\end{vmatrix}.
$$
This is related to a more suitable expression \cite[p. 72]{Dickson}, \cite[Ch. IV, \S{} 6, Ex. 6]{BkiALG47}, obtained by Albert Girard \cite{Girard} in 1629, and sometimes attributed to Waring (1762):

\begin{proposition}[Girard's formula]
\label{Albert}
If $1 \leq n \leq g$ and $s = (s_{1}, \dots, s_{g})$, let
$$
v_{n}(s) =  n \sum_{b \in \EuScript{P}_{n}}
\, \frac{(b_{1} + b_{2} + \dots + b_{g} - 1)!}{b_{1}! \dots b_{g}!} \,
u_{1}^{b_{1}} \dots u_{g}^{b_{g}},
$$
where $u_{n} = (- 1)^{n - 1} s_{n}$ for $1 \leq n \leq g$, and the summation being extended to
$$
\EuScript{P}_{n} = \set{b = (b_{1}, \dots, b_{g}) \in \NN^{g}}
{b_{1} + 2 b_{2} + \dots + g b_{g} = n}.
$$
Then
$$p_{n} = v_{n} \circ \mathsf{s}. \rlap \qed$$
\end{proposition}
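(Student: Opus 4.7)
The plan is to prove Girard's formula by a generating-function argument, working in the ring of formal power series $\QQ[s_1,\ldots,s_g][[z]]$ (or equivalently, $\QQ[t_1,\ldots,t_g][[z]]$, since $\mathsf{s}$ is injective on symmetric functions). The key is to consider the polynomial
\[
P(z) = \prod_{i=1}^{g}(1 - t_i z),
\]
and compare two expansions of $\log P(z)$.

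First I would expand $P(z)$ using its definition as a product over roots, obtaining
\[
P(z) = \sum_{n=0}^{g}(-1)^n s_n(t)\, z^n = 1 - \sum_{n=1}^{g} u_n z^n,
\]
with $u_n = (-1)^{n-1} s_n$. Substituting $W = \sum_{n=1}^{g} u_n z^n$ into $\log(1-W) = -\sum_{k\ge 1} W^k/k$ and applying the multinomial theorem gives
\[
\log P(z) = -\sum_{k=1}^{\infty}\frac{1}{k}\sum_{b_1+\cdots+b_g=k}\frac{k!}{b_1!\cdots b_g!}\,u_1^{b_1}\cdots u_g^{b_g}\,z^{b_1+2b_2+\cdots+gb_g}.
\]

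Next I would expand $\log P(z)$ the other way, using $\log(1 - t_i z) = -\sum_{n\ge 1} t_i^n z^n/n$:
\[
\log P(z) = \sum_{i=1}^{g}\log(1 - t_i z) = -\sum_{n=1}^{\infty}\frac{p_n(t)}{n}\,z^n.
\]
Equating the coefficients of $z^n$ in the two expressions (and noting that the inner sum is indexed precisely by $b \in \EuScript{P}_n$, for which $k = b_1+\cdots+b_g$), I get
\[
\frac{p_n}{n} = \sum_{b \in \EuScript{P}_n}\frac{(b_1+\cdots+b_g - 1)!}{b_1!\cdots b_g!}\,u_1^{b_1}\cdots u_g^{b_g},
\]
which multiplied by $n$ is exactly $v_n \circ \mathsf{s}$, as required.

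There is no real obstacle here; the only minor care point is that these are identities in the formal power series ring, so manipulations with $\log(1-W)$ are legitimate because $W$ has no constant term. Alternatively, one may view them as convergent analytic identities for $z$ in a small neighbourhood of $0$ and for real $t_i$, and then invoke the fact that both sides of the resulting polynomial identity $p_n = v_n \circ \mathsf{s}$ are polynomials in $t_1,\ldots,t_g$, so equality on an open set forces equality as polynomials, hence on all of $\CC^g$. Either viewpoint gives the result.
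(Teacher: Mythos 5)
Your proof is correct, and it is the standard generating-function derivation of the Girard--Waring formula. Note, however, that the paper does not actually give a proof of Proposition \ref{Albert}: it states the result with a terminal $\square$ and simply cites Dickson, Bourbaki (Alg\`ebre IV, \S 6, Ex.~6), and Girard's 1629 treatise. So there is no "paper proof" to compare against; your argument supplies exactly the classical justification that those references contain. The two expansions you equate, namely
$$
\log \prod_{i=1}^{g}(1 - t_i z) = -\sum_{n\ge 1}\frac{p_n(t)}{n}z^n
\qquad\text{and}\qquad
\log\Bigl(1 - \sum_{n=1}^{g} u_n z^n\Bigr) = -\sum_{k\ge 1}\frac{1}{k}\Bigl(\sum_{n=1}^{g} u_n z^n\Bigr)^{k},
$$
together with the multinomial theorem and the identity $k!/k = (k-1)!$ for $k = b_1+\cdots+b_g$, give precisely the coefficient of $z^n$ in the stated form, and the formal-power-series framing is the clean way to make this rigorous (as you correctly observe, $W$ has no constant term, so $\log(1-W)$ is a well-defined element of the ring of formal power series). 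Your alternative remark about analytic identities on a neighbourhood of $0$ is also valid but unnecessary given the formal argument. One small stylistic point: the paper first derives $p_n$ from Newton's relations via a determinant and then says Girard's formula is a "more suitable expression" related to it, so an alternative (and closer to the paper's spirit, had it proved the result) would be to derive the closed form by unwinding Newton's recursion; your generating-function route is shorter and is the one most references use.
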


The map $\varphi \mapsto \varphi \circ \mathsf{s}$ defines an isomorphism
$$
\begin{CD}
\mathsf{s}^{*} : \ZZ[s_{1}, \dots, s_{g}] & @>{\sim}>> & \ZZ[t_{1}, \dots, t_{g}]^{\sym}.
\end{CD}
$$
Since $D_{0} \in \ZZ[t_{1}, \dots, t_{g}]^{\sym}$, there is a polynomial $d_{0} \in \ZZ[s_{1}, \dots, s_{g}]$ such that
\begin{equation}
\label{d0sym}
d_{0}(\mathsf{s}(t)) = D_{0}(t) = \prod_{j < k} (t_{k} - t_{j})^{2}.
\end{equation}
Let
$$
U_{g} = \set{t \in \RR^{g}}{D_{0}(t) \neq 0}, \quad
\Pi_{g}^{\circ} = \set{s \in \RR^{g}}{d_{0}(s) \neq 0}.
$$
Then $\Pi_{g}^{\circ} = \mathsf{s}(U_{g})$, and $\Pi_{g}^{\circ}$ is a dense open set of $\Pi_{g}$. The roots of the polynomial $h_{t} \in \RR[u]$ given by \eqref{roots} are real and simple if and only if $\mathsf{s}(t) \in \Pi^{\circ}_{g}$. 

\begin{proposition}
\label{IsoViete}
With the preceding notation:
\label{change}
\begin{enumerate}
\item
\label{CHG1}
If $t \in \RR^{g}$, then
$$\card{\det J(\mathsf{s})(t)} = \sqrt{D_{0}(t)} = \prod_{j < k}\card{t_{k} - t_{j}}.$$
\item
\label{CHG2}
The map $\mathsf{s}$ is a local diffeomorphism at every point of $U_{g}$.
\item
\label{CHG3}
The map $\mathsf{s}$ is a diffeomorphism from the fundamental chamber $C_{g}$ to $\Pi_{g}^{\circ}$.
\end{enumerate}
\end{proposition}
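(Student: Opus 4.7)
My plan is to establish part (i) by a short determinant computation that leverages Girard's formula (Proposition \ref{Albert}); parts (ii) and (iii) will then follow from the inverse function theorem together with the homeomorphism $\RR^{g}/\mathfrak{S}_{g} \to \Pi_{g}$ already recorded above.

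For (i), I will exploit the factorization $\mathsf{p} = \mathsf{v} \circ \mathsf{s}$, where $\mathsf{p} = (p_{1}, \dots, p_{g})$ and $\mathsf{v} = (v_{1}, \dots, v_{g})$, so that the chain rule gives
$$
\det J(\mathsf{p})(t) = \det J(\mathsf{v})(\mathsf{s}(t)) \cdot \det J(\mathsf{s})(t).
$$
The Jacobian of $\mathsf{p}$ is immediate from $\partial p_{n}/\partial t_{j} = n \, t_{j}^{n-1}$: writing $J(\mathsf{p}) = \diag(1, 2, \dots, g) \cdot V$ with $V = (t_{j}^{i-1})$ the Vandermonde matrix, one obtains $\det J(\mathsf{p})(t) = g! \prod_{i<j}(t_{j} - t_{i})$. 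For $\mathsf{v}$, the constraint $\sum k b_{k} = n$ defining $\EuScript{P}_{n}$ forces $b_{k} = 0$ for all $k > n$, so $v_{n}$ depends only on $s_{1}, \dots, s_{n}$, making $J(\mathsf{v})$ lower triangular once rows are indexed by $n$ and columns by $m$. The same constraint combined with $b_{n} \geq 1$ forces $b$ to be the single vector with a $1$ in position $n$ and zeros elsewhere; the corresponding term is $n \, u_{n} = (-1)^{n-1} n \, s_{n}$, so the diagonal entry is the constant $(-1)^{n-1} n$. Hence $\det J(\mathsf{v}) = (-1)^{g(g-1)/2} g!$, and dividing yields $|\det J(\mathsf{s})(t)| = \prod_{i<j}|t_{j} - t_{i}| = \sqrt{D_{0}(t)}$ by equation \eqref{D0disc}.

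Part (ii) then drops out: since $U_{g}$ is by definition the locus where $D_{0}(t) \neq 0$, the Jacobian determinant is nonzero on $U_{g}$, and the inverse function theorem supplies a local diffeomorphism at each such point. For (iii), I note first that $C_{g} \subset U_{g}$, so $\mathsf{s}|_{C_{g}}$ is a local diffeomorphism onto its image; injectivity on $C_{g}$ holds because $\bar{C}_{g}$ is a fundamental domain for $\mathfrak{S}_{g}$; and surjectivity onto $\Pi_{g}^{\circ}$ follows from the observation that any $s \in \Pi_{g}^{\circ}$ has, by definition of $d_{0}$, a preimage $t \in \RR^{g}$ with pairwise distinct coordinates, and the unique ordering $t_{1} < \cdots < t_{g}$ places this representative in $C_{g}$. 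A bijective local diffeomorphism between open subsets of $\RR^{g}$ is a diffeomorphism.

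The main delicacy is the $J(\mathsf{v})$ computation: one must verify both that the triangular structure really forces all entries strictly above the diagonal to vanish, and that the sign substitution $u_{n} = (-1)^{n-1} s_{n}$ produces exactly $(-1)^{n-1} n$ (and no hidden cross terms) on the diagonal. Once this is in place, the rest of the argument is essentially mechanical.
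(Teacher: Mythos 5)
Your proof is correct and takes essentially the same route as the paper: both factor through Girard's formula $\mathsf{p} = \mathsf{v} \circ \mathsf{s}$, compute $\det J(\mathsf{p}) = g! \prod(t_k - t_j)$ via the Vandermonde matrix, observe that $J(\mathsf{v})$ is lower unitriangular up to sign with $\det J(\mathsf{v}) = \pm g!$, and divide, with (ii) and (iii) following from the inverse function theorem and injectivity of $\mathsf{s}$ on $C_g$. (Your sign $(-1)^{g(g-1)/2}$ agrees with the paper's $(-1)^{[g/2]}$ since $\binom{g}{2} - [g/2]$ is always even; and your more detailed unwinding of (iii) just makes explicit what the paper leaves implicit, namely that $\mathsf{s}(C_g) = \Pi_g^\circ$ is open and $\mathsf{s}|_{C_g}$ is bijective onto it.)
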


\begin{proof}
Define two polynomial maps from $\CC^{g}$ to $\CC^{g}$:
$$
\mathsf{p}(t) = (p_{1}(t), \dots, p_{g}(t)) \quad \text{and} \quad \mathsf{v}(s) = (v_{1}(s), \dots, v_{g}(s)).
$$ 
Then $\mathsf{p} = \mathsf{v} \circ \mathsf{s}$ by Girard's formula \ref{Albert}. If $1 \leq n \leq g$, then
$$v_{n}(s) = (-1)^{n + 1} n s_{n} + v'_{n}(s),$$
where $v'_{n}(s)$ depends only of $s_{1}, \dots, s_{n - 1}$. This implies that $J(\mathsf{v})$ is lower triangular, with $n$-th diagonal term equal to $(- 1)^{n + 1} n$. Hence,
$$\det J(\mathsf{v}) = (-1)^{[g/2]} \, g!$$
On the other hand,
$$
J(\mathsf{p}) =
\begin{pmatrix}
1               & 1               & \dots & 1 \\
\dots           & \dots           & \dots & \dots \\
k t_{1}^{k - 1} & k t_{2}^{k - 1} & \dots & k t_{n}^{k - 1}\\
\dots           & \dots           & \dots & \dots \\
g t_{1}^{g - 1} & g t_{2}^{g - 1} & \dots & g t_{n}^{g - 1}
\end{pmatrix}.
$$

Then $J(\mathsf{p}) = D.V(t)$, where $D$ is the diagonal matrix $\diag(1, 2, \dots, g)$, and
$$
V(t) =
\begin{pmatrix}
1             & 1             & \dots & 1 \\
t_{1}         & t_{2}         & \dots & t_{n} \\
\dots         & \dots         & \dots & \dots \\
t_{1}^{g - 1} & t_{2}^{g - 1} & \dots & t_{n}^{g - 1}
\end{pmatrix}
$$
is the Vandermonde matrix. Hence,
$$
\det J(\mathsf{p}) = g! \det V(t) = g! \prod_{j < k}(t_{k} - t_{j}),
$$
and since $J(\mathsf{p}) = J(\mathsf{v}).J(\mathsf{s})$, we get \eqref{CHG1}, which implies \eqref{CHG2}. Then \eqref{CHG3} comes from the fact that $\mathsf{s}$ is injective on the open subset $C_{g}$ of $U_{g}$.
\end{proof}

The \emph{bezoutian} of $h_{t}$ is the matrix
$$
B(t) = V(t).{^{t}V(t)} =
\begin{pmatrix}
p_{0}     & p_{1} & \dots & p_{g - 1} \\
p_{1}     & p_{2} & \dots & p_{g} \\
\dots     & \dots & \dots &\dots \\
p_{g - 1} & p_{g} & \dots & p_{2 g - 2}
\end{pmatrix}
\in \Mat_{g}(\RR),
$$
in such a way that $\det B(t) = D_{0}(t)$.

\begin{lemma}
\label{Sylvester}
Let $h_{t} \in \RR[u]$. The following conditions are equivalent:
\begin{enumerate}
\item
\label{HER1}
The roots of $h_{t}$ are real and simple, i.e. $\mathsf{s}(t) \in \Pi^{\circ}_{g}$. 
\item
\label{HER2}
The bezoutian $B(t)$ is positive definite.
\end{enumerate}
\end{lemma}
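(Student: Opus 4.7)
The plan is to exploit the factorization $B(t) = V(t) \cdot {}^tV(t)$ built into the definition, which presents $B(t)$ as a Gram-type symmetric bilinear form on $\RR^g$. For any $x = (x_1, \dots, x_g) \in \RR^g$, let $P_x(u) = \sum_{i=1}^g x_i u^{i-1} \in \RR[u]$ be the polynomial of degree at most $g - 1$ whose coefficient vector is $x$. Since the entries of $x^t V(t)$ and of ${}^tV(t)\, x$ are both equal to $P_x(t_j)$, one obtains the key identity
$$
x^t B(t)\, x \;=\; \sum_{j=1}^g P_x(t_j)^2,
$$
and the entire argument reduces to analyzing this expression.

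For the implication \eqref{HER1} $\Rightarrow$ \eqref{HER2}, I would argue as follows: if $t_1, \dots, t_g$ are real and distinct, then for any nonzero $x \in \RR^g$ the polynomial $P_x$ is real, nonzero, and of degree less than $g$, hence vanishes at at most $g - 1$ real points. Some $P_x(t_j)$ is therefore nonzero, and since each term $P_x(t_j)^2$ is real and non-negative, the identity gives $x^t B(t)\, x > 0$.

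For the converse \eqref{HER2} $\Rightarrow$ \eqref{HER1}, positive definiteness forces $\det B(t) = D_0(t) > 0$, so the roots are distinct. Suppose for contradiction some root is non-real; since $h_t \in \RR[u]$, there is a conjugate pair $t_p, t_q = \overline{t_p}$ with $t_p \notin \RR$. Apply Lagrange interpolation at the $g$ distinct complex nodes $t_1, \dots, t_g$ to obtain $L_p, L_q \in \CC[u]$ of degree less than $g$ characterized by $L_r(t_k) = \delta_{rk}$. Conjugating the explicit product formula for $L_p$, and using that the multiset $\{t_k\}$ is stable under complex conjugation with $\overline{t_p} = t_q$, one sees that the coefficients of $L_q$ are the complex conjugates of those of $L_p$. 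Therefore $N := (L_p - L_q)/(2i)$ is a real polynomial of degree less than $g$ with $N(t_p) = -i/2$, $N(t_q) = i/2$, and $N(t_k) = 0$ for $k \notin \{p,q\}$. Taking $y \in \RR^g$ to be the coefficient vector of $N$, the key identity yields
$$
y^t B(t)\, y \;=\; N(t_p)^2 + N(t_q)^2 \;=\; -\tfrac{1}{4} - \tfrac{1}{4} \;=\; -\tfrac{1}{2} < 0,
$$
contradicting positive definiteness.

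The main obstacle I anticipate is exactly this converse direction: the subtle point is producing a \emph{real} test vector witnessing a negative value of the quadratic form. The device that makes it work is the conjugation symmetry of the root set, which converts the complex Lagrange polynomials $L_p, L_q$ into a single real polynomial $N$ supported on the non-real pair; once $N$ is in hand, the signature computation reduces to the two evident squared purely imaginary evaluations displayed above.
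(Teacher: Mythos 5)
Your proof is correct. It rests on the same key identity as the paper's proof --- writing ${}^t x\, B(t)\, x = \sum_{j} L_j(x)^2$ with $L_j(x) = P_x(t_j)$, which is exactly the Gram factorization $B(t) = V(t)\,{}^tV(t)$ --- and your forward direction is essentially the paper's argument. Where you diverge is the converse. The paper invokes Sylvester's theorem and computes the full signature of the quadratic form, pairing each non-real linear form $L_j$ with its conjugate via $L_j^2 + \bar L_j^2 = 2A_j^2 - 2B_j^2$ to conclude that the signature is $(r+s,s)$ when there are $r$ real roots and $s$ conjugate pairs. You instead extract distinctness of the roots from $\det B(t) = D_0(t) > 0$ and then exhibit an explicit real vector --- the coefficient vector of $(L_p - L_q)/(2i)$ built from Lagrange interpolation at the distinct nodes --- on which the form takes the negative value $-1/2$ whenever a non-real root exists. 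The two routes buy slightly different things: the paper's signature computation yields the stronger quantitative statement (a count of real roots), which is what it actually cites; your argument is more self-contained for the stated equivalence and is, if anything, more careful, since the paper's claim that the signature equals $(r+s,s)$ tacitly uses the linear independence of the real and imaginary parts of the $L_j$ (valid for simple roots by the Vandermonde determinant, but not spelled out), whereas your determinant step and explicit negative witness make both the simplicity and the definiteness issues transparent.
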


\begin{proof}
This is a particular case of a theorem of Sylvester, which states that the number of real roots of $h_{t}$ is equal to $p - q$, where $(p,q)$ is the signature of the real quadratic form
$$
Q(x) = {^{t}x}.B(t).x,
$$
where $x = (x_{0}, \dots, x_{g - 1}) \in \RR^{g}$. Here is a short proof: if $1 \leq j \leq g$, define the linear form
$$
L_{j}(x) = x_{0} + x_{1} t_{j} + \dots + x_{g - 1} t_{j}^{g - 1}.
$$
Then $x.V(t) = (L_{1}(x), \dots, L_{g}(x))$, and
$$
Q(x) = \sum_{j = 1}^{g} L_{j}(x)^{2}.
$$
If $t_{j} \in \RR$, the linear form $L_{j}$ is real. If $t_{j} \notin \RR$, the non-real linear form $L_{j} = A_{j} + i B_{j}$ appears together with its conjugate, and
$$
L_{j}^{2} + \bar{L}_{j}^{2} = 2 A_{j}^{2} - 2 B_{j}^{2}.
$$
This shows that if $h_{t}$ has $r$ real roots and $s$ couples of non-real roots, the signature of $Q$ is $(r + s, s)$.
\end{proof}

The bezoutian $B(t)$ is positive definite if and only if its principal minors
$$
M_{j}(p_{1}, \dots, p_{g}) =
\begin{vmatrix}
p_{0} & p_{1} & \dots & p_{j - 1} \\
p_{1} & p_{2} & \dots & p_{j} \\
\dots & \dots & \dots &\dots \\
p_{j - 1} & p_{j} & \dots & p_{2 j - 2}
\end{vmatrix}
\quad (1 \leq j \leq g)
$$
are $> 0$, see for instance \cite[Prop. 3, p. 116]{BkiALG9}. By substituting to the power sums their expression given by Girard's formula of Proposition \ref{Albert}, we obtain $g$ polynomials
$$m_{j} = M_{j} \circ \mathsf{v} \in \ZZ[s_{1}, \dots, s_{g}] \quad (1 \leq j \leq g).$$
Of course, $m_{1} = g$, and
$$
M_{g}(p_{1}, \dots, p_{g}) = \det B(t) = D_{0}(t),
$$
hence, $m_{g}(s) = d_{0}(s)$.  As a consequence of Lemma \ref{Sylvester}, we obtain
$$
\Pi_{g}^{\circ} = \set{s \in \RR^{g}}{m_{j}(s) > 0 \ \text{if} \ 2 \leq j \leq g},
$$
hence:
\begin{lemma}
\label{Hermite}
We have
$$
\Pi_{g} = \set{s \in \RR^{g}}{m_{j}(s) \geq 0 \ \text{if} \ 2 \leq j \leq g}. \rlap \qed
$$
\end{lemma}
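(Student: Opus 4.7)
The plan is to establish the two inclusions separately, combining Lemma \ref{Sylvester} with a closure argument based on the properness of the Vi\`ete map.

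For the inclusion $\Pi_{g} \subseteq \set{s \in \RR^{g}}{m_{j}(s) \geq 0 \ \text{if} \ 2 \leq j \leq g}$: if $s = \mathsf{s}(t)$ with $t \in \RR^{g}$, then the bezoutian factors as the Gram matrix $B(t) = V(t).{^{t}V(t)}$ of real vectors, so it is positive semi-definite. Every leading principal minor of such a symmetric matrix is non-negative, so $m_{j}(s) \geq 0$ for $2 \leq j \leq g$.

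For the reverse inclusion, I would first note that $\mathsf{s}$ is proper, so the set $\Pi_{g} = \mathsf{s}(\RR^{g})$ is closed in $\RR^{g}$; moreover $\Pi_{g} = \overline{\Pi_{g}^{\circ}}$, the containment $\overline{\Pi_{g}^{\circ}} \subseteq \Pi_{g}$ being immediate, and conversely any $s = \mathsf{s}(t)$ being the limit of $s^{(n)} = \mathsf{s}(t^{(n)})$ where $t^{(n)} \in U_{g}$ is obtained by perturbing the coordinates of $t$ so as to make them pairwise distinct. Combined with the identification $\Pi_{g}^{\circ} = \set{s \in \RR^{g}}{m_{j}(s) > 0 \ \text{if} \ 2 \leq j \leq g}$, already noted as a consequence of Lemma \ref{Sylvester}, and continuity of the $m_{j}$, this yields $\Pi_{g} = \overline{\set{s}{m_{j}(s) > 0}} \subseteq \set{s}{m_{j}(s) \geq 0}$ by a second route. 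The matching inclusion $\set{s}{m_{j}(s) \geq 0} \subseteq \Pi_{g}$ would then follow from the density of $\set{s}{m_{j}(s) > 0}$ in $\set{s}{m_{j}(s) \geq 0}$, since any such $s$ would be a limit of points in $\Pi_{g}^{\circ} \subseteq \Pi_{g}$ and would lie in $\Pi_{g}$ by closedness.

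The main obstacle I anticipate is exactly this density claim, which concerns the degenerate locus where some $m_{j}(s)$ vanishes, equivalently where $h_{t}$ has a repeated root. The cleanest way to bypass it is to invoke once more the signature analysis of Sylvester recalled in the proof of Lemma \ref{Sylvester}: the quadratic form $Q(x) = \sum_{j=1}^{g} L_{j}(x)^{2}$ attached to $B(s)$ is positive semi-definite if and only if $h_{t}$ has no complex conjugate pair of roots, since each such pair contributes an indefinite summand $2 A_{j}^{2} - 2 B_{j}^{2}$. Using the power-sum Hankel structure of $B(s)$ together with Newton's recurrence, which determines $p_{n}$ for $n \geq g$ from $p_{1}, \dots, p_{g}$, one checks that non-negativity of the leading principal minors $m_{j}(s)$ alone forces positive semi-definiteness of $B(s)$, hence reality of all roots of $h_{t}$, hence $s \in \Pi_{g}$.
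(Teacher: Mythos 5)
Your forward inclusion argument is correct and is in fact more careful than the paper's own tacit justification. The obstacle you identify in the reverse inclusion is genuine, but the fix you propose in the last paragraph cannot be carried out: non-negativity of the \emph{leading} principal minors of a symmetric matrix does not force positive semi-definiteness, and this failure is not repaired by the power-sum Hankel structure. Indeed the lemma as printed is false once $g \geq 4$. Take $g = 4$ and $s = (0,0,0,1)$, so $h_{s}(u) = u^{4} + 1$. Newton's identities give $(p_{0}, \dots, p_{6}) = (4, 0, 0, 0, -4, 0, 0)$, hence
$$
B(s) = \begin{pmatrix} 4 & 0 & 0 & 0 \\ 0 & 0 & 0 & -4 \\ 0 & 0 & -4 & 0 \\ 0 & -4 & 0 & 0 \end{pmatrix},
$$
with leading principal minors $m_{1} = 4$, $m_{2} = 0$, $m_{3} = 0$, $m_{4} = d_{0}(s) = 256$, all nonnegative. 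Yet $u^{4} + 1$ has no real roots, so $s \notin \Pi_{4}$; equivalently $B(s)$ has signature $(2,2)$ and is indefinite. Thus $\Pi_{4} \subsetneq \set{s \in \RR^{4}}{m_{j}(s) \geq 0,\ 2 \leq j \leq 4}$.

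So neither the paper's implicit step ($\Pi_{g} = \overline{\Pi_{g}^{\circ}}$ and $\set{s}{m_{j}(s) > 0} = \Pi_{g}^{\circ}$, ``hence'' $\Pi_{g} = \set{s}{m_{j}(s) \geq 0}$ by passing to closures) nor your proposed strengthening of Sylvester can close the gap: the closure of $\set{s}{m_{j}(s) > 0}$ is in general strictly smaller than $\set{s}{m_{j}(s) \geq 0}$, and in this example the point $(0,0,0,1)$ lies in the latter but not the former. A correct characterization must express the full positive semi-definiteness of $B(s)$, for instance by requiring nonnegativity of \emph{all} principal minors, or by requiring that the leading minors be positive up to some rank $r$ and vanish thereafter; the sign pattern $(4,0,0,256)$ above is incompatible with either. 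For $g = 2$ and $g = 3$ the lemma does hold, because there the condition $m_{j}(s) \geq 0$ for $2 \leq j \leq g$ reduces to the single discriminant inequality $d_{0}(s) \geq 0$, as in Examples \ref{exampled2} and \ref{exampled3}, and these are the only cases the rest of the paper uses.
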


\begin{example}
\label{exampled2}
If $g = 2$, then $d_{0}(s) = s_{1}^{2} - 4 s_{2}$, and
$$
\Pi_{2} = \set{s \in \RR^{2}}{d_{0}(s) \geq 0}.
$$
\end{example}

\begin{example}
\label{exampled3}
If $g = 3$, then
$$
d_{0}(s) = s_{1}^{2} s_{2}^{2} - 4 s_{2}^{3} -
4 s_{1}^{3} s_{3} + 18 s_{1} s_{2} s_{3} - 27 s_{3}^{2},
$$
and $m_{2}(s) = 2(s_{1}^{2} - 3 s_{2})$. But if $d_{0}(s) \geq 0$, then $m_{2} \geq 0$. Actually, if
$$
p = - \frac{s_{1}^{2} - 3 s_{2}}{3}, \quad
q =  \frac{2 s_{1}^{3} - 9 s_{1} s_{2} + 27 s_{3}}{27},
$$
then
$$
d_{0}(s) = - (4 p^{3} + 27 q^{2}), \quad m_{2} = - 6 p.
$$
If $d_{0}(s) \geq 0$, then $4 p^{3} \leq - 27 q^{2}$ and $p \leq 0$. Hence, as it is well known, $\Pi_{3}$ is defined by only one inequality:
$$
\Pi_{3} = \set{s \in \RR^{3}}{d_{0}(s) \geq 0}.
$$
\end{example}

\section{The symmetric alcove}
\label{sec_alcove}

The \emph{symmetric alcove} is the compact set
$$
\Sigma_{g} = \mathsf{s}(I_{g}) \subset \Pi_{g}.
$$
We have $\mathsf{s}(\bar{A}_{g}) = \mathsf{s}(I_{g})$, and the induced map
$$
\begin{CD}
I_{g}/\mathfrak{S}_{g} & @>{\sim}>> & \Sigma_{g}
\end{CD}
$$
is a homeomorphism, leading to the commutative diagram
$$
\xymatrix{& I_{g} \ar[dr]^{\pi} \ar[dd]^{\mathsf{s}} \\
\bar{A}_{g} \ar[ur]^{\iota} \ar[dr]_{\simeq} &
& I_{g}/\mathfrak{S}_{g} \ar[dl]^{\simeq} \\
& \Sigma_{g}}
$$
If $p \in \CC[t_{1}, \dots, t_{g}]$ is a symmetric polynomial and if $\lambda \in \CC$, define
$$
p(\lambda ; t) = p(\lambda + t_{1}, \dots, \lambda + t_{g}), \quad t = (t_{1}, \dots, t_{g}).
$$
The polynomial $p(\lambda ; t)$ is symmetric with respect to $t$.

\begin{lemma}
\label{Surjection}
If $t \in \RR^{g}$ and $\lambda > 0$, the following conditions are equivalent:
\begin{enumerate}
\item
\label{SUR1}
$s_{i}(\lambda ; t) > 0$ and $s_{i}(\lambda ; -t) > 0$  for $1 \leq i \leq g$.
\item
\label{SUR2}
$\card{t_{i}} < \lambda$ for $1 \leq i \leq g$. \qedhere
\end{enumerate}
\end{lemma}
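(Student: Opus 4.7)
The plan is to reformulate both hypotheses as sign conditions on a polynomial built out of the shifted coordinates, and then invoke a one-line elementary positivity argument (essentially a weak form of Descartes' rule of signs).

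The easy direction (ii) $\Rightarrow$ (i) is essentially tautological: if $|t_j| < \lambda$ for every $j$, then $\lambda + t_j > 0$ and $\lambda - t_j > 0$ for every $j$. Each $s_i(\lambda; t)$ is a sum of products of $i$ distinct entries of $(\lambda + t_1, \dots, \lambda + t_g)$, hence a sum of products of strictly positive reals; so $s_i(\lambda; t) > 0$, and symmetrically $s_i(\lambda; -t) > 0$.

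For the converse, I would introduce the auxiliary polynomial
$$
P(u) = \prod_{j=1}^{g}\bigl(u + \lambda + t_j\bigr) = \sum_{i=0}^{g} s_i(\lambda; t)\, u^{g-i},
$$
(with the convention $s_0 = 1$), whose real roots are exactly the numbers $-(\lambda + t_j)$. If all $s_i(\lambda; t) > 0$, then every coefficient of $P$ is strictly positive, so $P(u) > 0$ for every $u \geq 0$. Hence $P$ has no nonnegative real root, which forces $-(\lambda + t_j) < 0$, i.e.\ $\lambda + t_j > 0$, for each $j$. Applying the very same reasoning to $Q(u) = \prod_{j}(u + \lambda - t_j) = \sum_{i} s_i(\lambda; -t)\, u^{g-i}$ gives $\lambda - t_j > 0$ for each $j$. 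Combining the two inequalities yields $|t_j| < \lambda$, which is (ii).

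There is no genuine obstacle here; the only small point worth being careful about is that we need \emph{strict} positivity $\lambda + t_j > 0$, not merely $\lambda + t_j \geq 0$. This is automatic because positivity of all coefficients of $P$ rules out not only positive roots but also the root $0$ (since the constant term $s_g(\lambda; t) = \prod(\lambda + t_j)$ is strictly positive). The rest is bookkeeping, and the statement then passes cleanly to $-t$ by symmetry.
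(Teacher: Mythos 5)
Your proof is correct and follows essentially the same route as the paper's: the paper first reduces to the auxiliary fact that $s_i(v)>0$ for all $i$ iff $v_i>0$ for all $i$, and then proves it exactly by observing that $\prod_j(u+v_j)$ has all positive coefficients and hence no nonnegative root. You simply inline this reduction by working directly with the shifted polynomial $P(u)=\prod_j(u+\lambda+t_j)$, but the positivity-of-coefficients argument is the same.
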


\begin{proof}
It suffices to prove the following result: if $t \in \RR^{g}$, the following conditions are equivalent:
\begin{enumerate}
\item
\label{DES1}
$s_{i}(t) > 0$ for $1 \leq i \leq g$.
\item
\label{DES2}
$t_{i} > 0$ for $1 \leq i \leq g$.
\end{enumerate}
If $h_{t} \in \RR[u]$ is defined as in \eqref{roots}, namely
$$
h_{t}(u) = (u - t_{1}) \dots (u - t_{g}) = u^{g} - s_{1} u^{g - 1} + \dots + (- 1)^{g} s_{g},
$$
let $f^{\flat}(u) = (- 1)^{g}f(-u)$. Then
$$
f^{\flat}(u) = (u + t_{1}) \dots (u + t_{g}) = u^{g} + s_{1} u^{g - 1} + s_{2} u^{g - 2} + \dots + s_{g},
$$
and if \eqref{DES1} is satisfied, the roots of $(- 1)^{g}f(-u)$ are $< 0$, and this implies \eqref{DES2}. The converse is trivial.
\end{proof}

The polynomial $s_{i}(\lambda;t) \in \CC[t_{1}, \dots, t_{g}]$ is a linear combination of elementary symmetric polynomials of degree $\leq i$:

\begin{lemma}
\label{FormesAff}
If $1 \leq i \leq g$ and if $\lambda > 0$, then
$$
s_{i}(\lambda;t) = L^{+}_{i}(\lambda ; \mathsf{s}(t)),
$$
with
$$
L^{+}_{i}(\lambda ; s) =
\sum_{k = 0}^{i} \binom{g - i + k}{k} s_{i - k} \lambda^{k},
$$
which is a linear form with respect to $s_{1}, \dots, s_{i}$. Similarly,
$$s_{i}(\lambda;-t) = L^{-}_{i}(\lambda ; \mathsf{s}(t)),$$
where
$$
L^{-}_{i}(\lambda ; s_{1},s_{2}, \dots, s_{i}) = L^{+}_{i}(\lambda ; - s_{1}, s_{2}, \dots, (-1)^{i} s_{i}).
$$

\end{lemma}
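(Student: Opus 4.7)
The cleanest route is via the generating polynomial
$$
P(u; t) \;=\; \prod_{j=1}^{g}(u+t_{j}) \;=\; \sum_{i=0}^{g} s_{i}(t)\, u^{g-i},
$$
which is just the standard identification of elementary symmetric polynomials as coefficients. With the notation $s_{i}(\lambda;t) = s_{i}(\lambda+t_{1},\dots,\lambda+t_{g})$, I would first record that
$$
\sum_{i=0}^{g} s_{i}(\lambda;t)\, u^{g-i} \;=\; \prod_{j=1}^{g}\bigl(u+\lambda+t_{j}\bigr) \;=\; P(u+\lambda;\, t).
$$

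Next, I would make the affine substitution $v=u+\lambda$ on the right and re-expand in powers of $u$. Since $P(v;t)=\sum_{k=0}^{g} s_{k}(t)\, v^{g-k}$, the binomial theorem gives
$$
P(u+\lambda;\,t) \;=\; \sum_{k=0}^{g} s_{k}(t)\,(u+\lambda)^{g-k}
\;=\; \sum_{k=0}^{g} s_{k}(t) \sum_{j=0}^{g-k} \binom{g-k}{j}\, u^{j}\, \lambda^{\,g-k-j}.
$$
Matching the coefficient of $u^{g-i}$ on both sides (i.e.\ taking $j=g-i$, which forces $0\le k\le i$) and reindexing via $k\mapsto i-k$, using the identity $\binom{g-k}{g-i}=\binom{g-i+(i-k)}{i-k}$, yields
$$
s_{i}(\lambda;t) \;=\; \sum_{k=0}^{i} \binom{g-i+k}{k}\, s_{i-k}(t)\,\lambda^{k} \;=\; L_{i}^{+}\bigl(\lambda;\mathsf{s}(t)\bigr),
$$
which is the first assertion. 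This step is entirely mechanical; there is no real obstacle beyond keeping the index bookkeeping straight.

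For the second identity, I would simply substitute $t_{j}\mapsto -t_{j}$ in the first one. Since $s_{j}(-t)=(-1)^{j}s_{j}(t)$, the formula just derived becomes
$$
s_{i}(\lambda;-t) \;=\; \sum_{k=0}^{i} \binom{g-i+k}{k}\,(-1)^{i-k}\, s_{i-k}(t)\,\lambda^{k},
$$
which is precisely $L_{i}^{+}\bigl(\lambda;\,-s_{1},s_{2},-s_{3},\dots,(-1)^{i}s_{i}\bigr)$ when one remembers that only coordinates $s_{1},\dots,s_{i}$ appear in $L_{i}^{+}$, and in that range the parity of the substituted sign matches $(-1)^{i-k}$. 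That is exactly the definition of $L_{i}^{-}$ given in the statement, completing the proof.
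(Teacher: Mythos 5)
Your argument is correct and is essentially the same computation the paper performs, merely expressed differently: the paper shifts the argument of $h_{t}$ via a Taylor expansion in derivatives, while you work with the generating polynomial $\prod_{j}(u+\lambda+t_{j})$ and extract the coefficient of $u^{g-i}$ after a binomial expansion. Since the Taylor coefficients of a polynomial are its ordinary coefficients, the two presentations are equivalent, and your derivation of the second identity from $s_{j}(-t)=(-1)^{j}s_{j}(t)$ matches the paper's.
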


\begin{proof}
The Taylor expansion of $h_{t}(u - \lambda)$ with respect to $u$ shows that
$$
s_{i}(\lambda,t) = (-1)^{i} \, \frac{h_{t}^{(g - i)}(-\lambda)}{(g - i)!} \, .
$$
The first formula is obtained by transferring these equalities in the Taylor expansion of $h_{t}^{(g - i)}(\lambda)$ at $0$ :
$$
h_{t}^{(g - i)}(- \lambda) = \sum_{k = 0}^{i} (-1)^{k} h_{t}^{(g - i + k)}(0) \frac{\lambda^{k}}{k!} \, .
\mbox{\qedhere}
$$
The second formula is deduced from the first by noticing that $s_{i}(- t) = (-1)^{i} s_{i}(t)$.
\end{proof}

Considering that $s_{0} = 1$, we have
\begin{align*}
s_{1}(\lambda;t) & = s_{1}(t) + g \lambda, \\
s_{2}(\lambda;t) & = s_{2}(t) + (g - 1) \lambda s_{1}(t) + \frac{g(g - 1)}{2} \, \lambda^{2},\\
s_{g}(\lambda;t) & = \sum_{k = 0}^{g} s_{g - k}(t) \lambda^{k} = (-1)^{g} h_{t}(-\lambda) =
\prod_{i = 1}^{g}(t_{i} + \lambda).
\end{align*}
Hence
$$
L^{\pm}_{1}(2;s) = \pm s_{1} + 2 g, \quad \quad
L^{+}_{2}(2;s) = s_{2} \pm 2 (g - 1) s_{1} + 2 g(g - 1), \\
$$
and
\begin{equation}
\label{lineaire}
L^{\pm}_{g}(2;s) = \sum_{k = 0}^{g} 2^{k} s_{g - k}, \quad \quad
L^{-}_{g}(2;s) = \sum_{k = 0}^{g} (-1)^{g - k} 2^{k} s_{g - k}.
\end{equation}

From Lemmas \ref{Surjection} and \ref{FormesAff} we obtain

\begin{lemma}
\label{DefTheta}
Assume $t \in \RR^{g}$. Then $t \in I_{g}$ if and only if $\mathsf{s}(t) \in \Theta_{g}$, where
$$
\Theta_{g} = \set{s\in \RR^{g}}
{L^{\pm}_{i}(2 ; s) \geq 0 \ \text{for} \ 1 \leq i \leq g}. \rlap \qed
$$
\end{lemma}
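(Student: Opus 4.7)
The plan is to reduce the statement to a direct combination of Lemmas \ref{Surjection} and \ref{FormesAff}, with a minor modification that upgrades the strict inequalities of Lemma \ref{Surjection} to the non-strict inequalities needed here (since $I_{g}$ and $\Theta_{g}$ are closed).

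First I would translate the defining conditions of $\Theta_{g}$: by Lemma \ref{FormesAff}, the inequalities $L^{+}_{i}(2;\mathsf{s}(t)) \geq 0$ and $L^{-}_{i}(2;\mathsf{s}(t)) \geq 0$ for $1 \leq i \leq g$ are equivalent to
\[
s_{i}(2;t) \geq 0 \quad \text{and} \quad s_{i}(2;-t) \geq 0, \qquad 1 \leq i \leq g,
\]
so it is enough to prove that this system of $2g$ inequalities is equivalent to $|t_{i}| \leq 2$ for $1 \leq i \leq g$. The forward implication is immediate: if $|t_{i}| \leq 2$, then $2 \pm t_{i} \geq 0$ for every $i$, and the elementary symmetric polynomials of non-negative real numbers are non-negative, whence $s_{i}(2;\pm t) = s_{i}(2 \pm t_{1}, \dots, 2 \pm t_{g}) \geq 0$.

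For the converse, the key observation is the closed version of the auxiliary implication used in the proof of Lemma \ref{Surjection}: if $(u_{1}, \dots, u_{g}) \in \RR^{g}$ satisfies $s_{i}(u) \geq 0$ for $1 \leq i \leq g$, then $u_{i} \geq 0$ for all $i$. This follows from Descartes' rule of signs applied to the polynomial
\[
f^{\flat}(u) = \prod_{i=1}^{g}(u + u_{i}) = u^{g} + s_{1} u^{g-1} + \dots + s_{g}:
\]
all its coefficients are non-negative, so $f^{\flat}$ has no strictly positive real roots, and since its roots are exactly the $-u_{i}$, this gives $u_{i} \geq 0$ for all $i$. Applying this with $u_{i} = 2 + t_{i}$ yields $t_{i} \geq -2$, and with $u_{i} = 2 - t_{i}$ yields $t_{i} \leq 2$, hence $t \in I_{g}$.

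I expect no serious obstacle: the only subtle point is the passage from strict to non-strict inequalities, and Descartes' rule of signs handles it cleanly. An alternative would be a pure continuity/density argument using the fact that both $I_{g}$ and $\mathsf{s}^{-1}(\Theta_{g})$ are closed and contain the open sets produced by Lemma \ref{Surjection}, but the Descartes approach is both shorter and self-contained.
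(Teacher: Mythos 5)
Your proof is correct and follows the paper's own route, which is simply to combine Lemmas \ref{Surjection} and \ref{FormesAff}. The paper leaves implicit the passage from the strict inequalities of Lemma \ref{Surjection} to the non-strict ones defining $\Theta_{g}$, and your argument on $f^{\flat}(u)=\prod_{i}(u+u_{i})$ (no positive roots when all coefficients are nonnegative) cleanly supplies that missing step, mirroring the polynomial trick already used in the proof of Lemma \ref{Surjection}.
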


Notice that the polyhedron $\Theta_{g}$ is unbounded.

\begin{theorem}
\label{Image}
If $\Sigma_{g} = \mathsf{s}(I_{g})$, then
$$\Sigma_{g} = \Theta_{g} \cap \Pi_{g},$$
where $\Pi_{g}$ and $\Theta_{g}$ are defined in Lemmas \ref{Hermite} and \ref{DefTheta}. Moreover, $\Sigma_{g} = \mathsf{s}(\bar{A}_{g})$ is a semi-algebraic set homeomorphic to the $g$-dimensional simplex, and
$$
\Sigma_{g} \subset \prod_{i = 1}^{g} \left[ - 2^{i} \binom{g}{i}, 2^{i} \binom{g}{i} \right].
$$
\end{theorem}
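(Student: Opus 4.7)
The plan is to verify the three assertions in order, using the two lemmas just established and the properties of the Vi\`ete map collected in Section \ref{sec_VieteMap}.

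\emph{Step 1: The identity $\Sigma_{g} = \Theta_{g} \cap \Pi_{g}$.} The inclusion $\subseteq$ is immediate: if $s = \mathsf{s}(t)$ with $t \in I_{g} \subset \RR^{g}$, then $s \in \mathsf{s}(\RR^{g}) = \Pi_{g}$, and Lemma \ref{DefTheta} gives $s \in \Theta_{g}$. For $\supseteq$, take $s \in \Theta_{g} \cap \Pi_{g}$. Since $s \in \Pi_{g}$, there exists $t \in \RR^{g}$ with $\mathsf{s}(t) = s$, and since $s \in \Theta_{g}$, Lemma \ref{DefTheta} gives $t \in I_{g}$; hence $s \in \mathsf{s}(I_{g}) = \Sigma_{g}$. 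The identity $\Sigma_{g} = \mathsf{s}(\bar{A}_{g})$ follows because $\bar{A}_{g}$ is a fundamental domain for $\mathfrak{S}_{g}$ in $I_{g}$, so $\mathsf{s}(\bar{A}_{g}) = \mathsf{s}(I_{g})$. The set $\Sigma_{g}$ is semi-algebraic because Lemmas \ref{Hermite} and \ref{DefTheta} exhibit it as the intersection of $\Pi_{g}$ and $\Theta_{g}$, both of which are cut out by finitely many polynomial inequalities.

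\emph{Step 2: Homeomorphism with the $g$-simplex.} The preamble to this section already provides the homeomorphism $I_{g}/\mathfrak{S}_{g} \xrightarrow{\sim} \Sigma_{g}$ induced by $\mathsf{s}$. Composing with the quotient map $\bar{A}_{g} \hookrightarrow I_{g} \to I_{g}/\mathfrak{S}_{g}$, which is a continuous bijection between compact Hausdorff spaces, we obtain a homeomorphism $\mathsf{s}\colon \bar{A}_{g} \xrightarrow{\sim} \Sigma_{g}$. It therefore suffices to observe that $\bar{A}_{g}$, defined by the $g+1$ linear inequalities
\[
-2 \leq t_{1} \leq t_{2} \leq \dots \leq t_{g} \leq 2,
\]
is itself a $g$-simplex: its $g+1$ extreme points are the vectors $v_{k} = (-2,\dots,-2,2,\dots,2)$ with $k$ coordinates equal to $-2$ (for $k=0,1,\dots,g$), and the differences $v_{k}-v_{0}$ are plainly linearly independent, so these vertices are affinely independent and $\bar{A}_{g}$ is their convex hull.

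\emph{Step 3: The box containment.} For every $t \in I_{g}$ and every $1 \leq n \leq g$ one has
\[
\card{s_{n}(t)} \leq \sum_{i_{1} < \dots < i_{n}} \card{t_{i_{1}}} \cdots \card{t_{i_{n}}} \leq \binom{g}{n} 2^{n},
\]
which is the required bound on each coordinate of a point of $\Sigma_{g}$.

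Among these three steps, the only one requiring any actual work is Step 1, and essentially all of that work has already been done in Lemmas \ref{Hermite} and \ref{DefTheta}; the expected obstacle is therefore bookkeeping rather than any substantive difficulty, namely keeping track of how the Sylvester--Hermite conditions defining $\Pi_{g}$ combine with the affine inequalities defining $\Theta_{g}$ to cut out exactly $\mathsf{s}(I_{g})$.
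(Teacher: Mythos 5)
Your proof is correct and follows essentially the same route as the paper's, which in fact contents itself with a one-line citation of Lemma \ref{DefTheta} for the first assertion, Proposition \ref{IsoViete} for the homeomorphism, and the definition of the $s_n$ for the box bound. Your Step 2 is a useful expansion: the paper does not spell out that $\bar{A}_{g}$ itself is a $g$-simplex (a bounded $g$-dimensional polytope cut out by exactly $g+1$ inequalities, with vertices $v_0,\dots,v_g$), nor that the factorization through $I_g/\mathfrak{S}_g$ gives the homeomorphism $\bar{A}_g \xrightarrow{\sim} \Sigma_g$ by compactness, so that part of your write-up fills a genuine gap in the paper's terse argument.
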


\begin{proof}
By definition, $\Pi_{g} = \mathsf{s}(\RR^{g})$, hence, the first statement follows from Lemma \ref{DefTheta}. The properties of $\Sigma_{g}$ follow from Proposition \ref{IsoViete}, and the last statement is just a consequence  of the definition of $s_{1}(t), \dots, s_{g}(t)$. 
\end{proof}

\begin{example}
\label{Sigma2}
If $g = 2$, then $d_{0}(s) = s_{1}^{2} - 4 s_{2}$, and
$$
\Pi_{2} = \set{s \in \RR^{2}}{d_{0}(s) \geq 0},
$$
as we saw in Example \ref{exampled2}. The triangle $\Theta_{2}$ is defined by four inequalities:
\begin{align*}
L_{1}^{\pm}(2,s) & = \pm s_{1} + 4 \geq 0, \\
L_{2}^{\pm}(2,s) & = s_{2} \pm 2 s_{1} + 4 \geq 0.
\end{align*}
The symmetric alcove $\Sigma_{2}$ the curvilinear triangle, drawn in Figure \ref{domainsp4}, contained in the square $[-4, 4] \times [-4, 4]$.

\bigskip
\begin{figure}[ht!]
\centering
\framebox[8cm][c]{\includegraphics[scale=0.4]{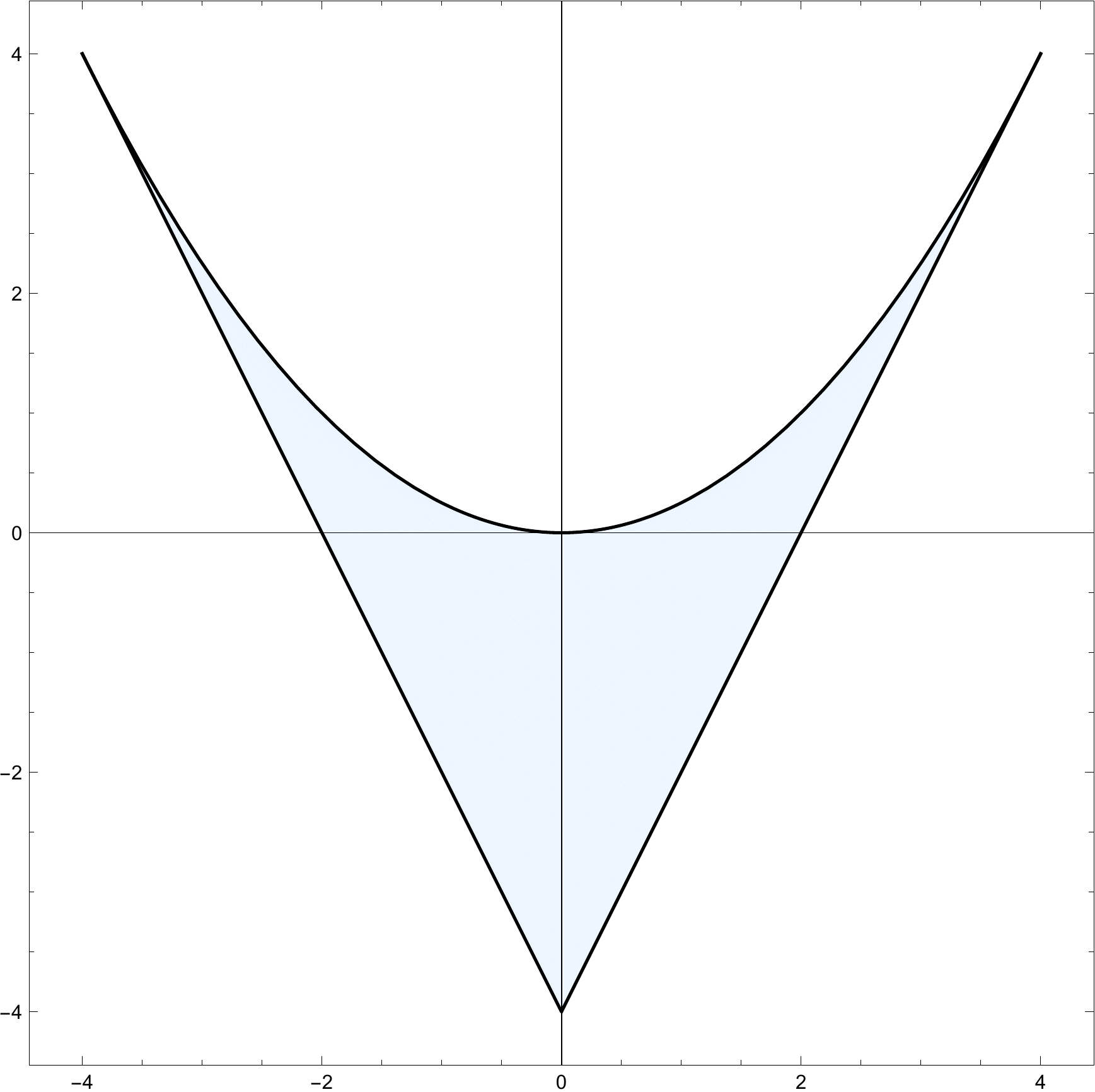}}
\caption{The symmetric alcove $\Sigma_{2}$.}
\label{domainsp4}
\end{figure}
\bigskip

\end{example}

\begin{example}
\label{Sigma3}
If $g = 3$, then
$$
d_{0}(s) = s_{1}^{2} s_{2}^{2} - 4 s_{2}^{3} -
4 s_{1}^{3} s_{3} + 18 s_{1} s_{2} s_{3} - 27 s_{3}^{2},
$$
and
$$
\Pi_{3} = \set{s \in \RR^{3}}{d_{0}(s) \geq 0},
$$
as we saw in Example \ref{exampled3}. The polyhedron $\Theta_{3}$ is defined by six inequalities
\begin{align*}
L_{1}^{\pm}(2,s) & = \pm s_{1} + 6 \geq 0, &
L_{2}^{\pm}(2,s) & = s_{2} \pm 4 s_{1} + 12 \geq 0, \\
L_{3}^{\pm}(2,s) & = \pm s_{3} + 2 s_{2} \pm 4 s_{1} + 8 \geq 0.
\end{align*}
The intersection of $\Theta_{3}$ and of the box
$$[-4, 4] \times [-12,12]  \times [-8,8]$$
make up a polytope $P_{3}$ with $6$ vertices
$$
\begin{array}{lllllllll}
p_{1} = (-6, 12, -8) & = \mathsf{s}(-2,-2,-2), & p_{2} = (-2,-4,8) & = \mathsf{s}(2,-2,-2), \\
p_{3} = ( 2, -4, -8) & = \mathsf{s}( 2, 2,-2), & p_{4} = ( 6,12,8) & = \mathsf{s}( 2, 2, 2), \\
p_{5} = (6, 12, -8), &                & p_{6} = (-6, 12, 8),
\end{array}
$$
and $7$ facets supported the following hyperplanes:
$$
s_{2} = 12, \quad s_{3} = \pm 8, \quad L_{2}^{\pm}(s) = 0, \quad L_{3}^{\pm}(s) = 0. 
$$
Then
$$\Sigma_{3} = \Pi_{3} \cap P_{3}.$$
The symmetric alcove $\Sigma_{3}$ is drawn in Figure \ref{domainsp6}. This set is invariant by the symmetry $(s_{1}, s_{2}, s_{3}) \mapsto (- s_{1}, s_{2}, - s_{3})$.
The graphical representation leads to suppose that 
$$\Sigma_{3} = \Pi_{3} \cap \Delta_{3},$$
where $\Delta_{3}$ is the tetrahedron with vertices $p_{1}, p_{2}, p_{3} , p_{4}$ and support hyperplanes
$$
L_{3}^{\pm}(s) = 0, \quad L_{0}^{\pm}(s) = 0,
$$
where $L_{0}^{\pm}(s) = 24 \pm 4 s_{1} - 2 s_{2} \mp 3 s_{3}$.

\bigskip
\begin{figure}[ht!]
\centering
\framebox[8cm][c]{\includegraphics[scale=0.4]{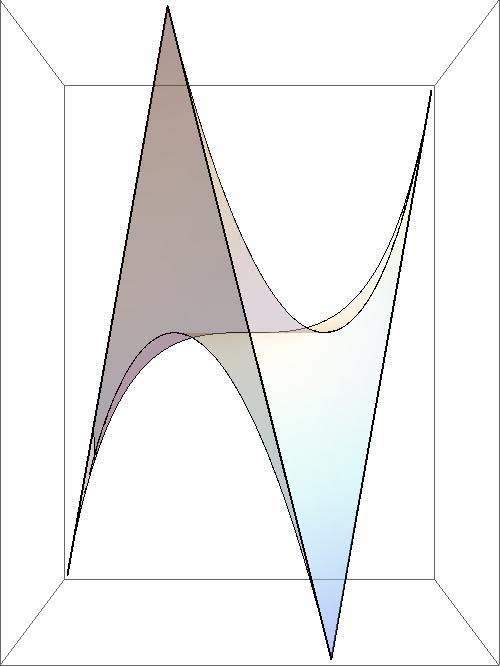}}
\caption{The symmetric alcove $\Sigma_{3}$.}
\label{domainsp6}
\end{figure}
\bigskip

\end{example}

\section{Symmetric integration formula}
\label{sec_symint}

The map $\varphi \mapsto \varphi \circ \mathsf{s}$ defines an isomorphism
$$
\begin{CD}
\mathsf{s}^{*} : \VAS & @>{\sim}>> & \VAI = \VAA
\end{CD}
$$
If $\vaf \in \VAG$, we denote by $\widetilde{\vaf}$ the unique function in $\VAI$ such that
$$
\widetilde{\vaf} \circ \mathsf{s}(t) = \vaf \circ k (t),
$$
that is,
$$
\widetilde{\vaf} \circ \mathsf{s}(2 \cos \theta_{1}, \dots, 2 \cos \theta_{g}) =
\vaf \circ h(\theta_{1}, \dots, \theta_{g}).
$$
Then the map $\vaf \mapsto \widetilde{\vaf}$ is  an isomorphism
$$
\begin{CD}
\VAG & @>{\sim}>> & \VAS.
\end{CD}
$$
inducing by restriction an isomorphism, cf. Proposition \ref{Chevalley} in the appendix:
$$
\begin{CD}
R(G) & @>{\sim}>> & \ZZ[s_{1}, \dots, s_{g}].
\end{CD}
$$
Since $D_{1} \in \ZZ[t_{1}, \dots, t_{g}]^{\sym}$, there is a polynomial $d_{1} \in \ZZ[s_{1}, \dots, s_{g}]$ such that
\begin{equation}
\label{d1sym}
d_{1}(\mathsf{s}(t)) = D_{1}(t) = \prod_{j = 1}^{g} (4 - t_{j}^{2}),
\end{equation}
hence
$$
D_{1}(t) = s_{g}(2;t)s_{g}(2; - t),
$$
where $s_{\pm}(\lambda;t)$ is defined in Lemma \ref{FormesAff}, and
$$
d_{1}(s) = L^{+}_{g}(2 ; s) L^{-}_{g}(2 ; s),
$$
where $L^{\pm}_{g}(2 ; s)$ is defined by \eqref{lineaire}.

\begin{proposition}[Symmetric integration formula]
If $\vaf \in \VAG$, then
$$
\int_{G} \vaf(m) \, dm = \int_{\Sigma_{g}} \widetilde{\vaf}(s) \nu_{g}(s) \, ds,
$$
with $ds = ds_{1} \dots ds_{g}$, and
$$
\nu_{g}(s) = \frac{1}{(2 \pi)^{g}} \sqrt{d_{0}(s) d_{1}(s)},
$$
where $d_{0}(s)$ is given by \eqref{d0sym} and $d_{1}(s)$ by \eqref{d1sym}. 
\end{proposition}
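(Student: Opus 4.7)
The plan is to reduce Proposition \ref{weylalg} to the fundamental alcove and then change variables via the Viète map, using the Jacobian calculation from Proposition \ref{IsoViete}. The crucial arithmetic is that the density $\lambda_g$ carries a factor $D_0(t)$ while the change of variables contributes $\sqrt{D_0(t)}$, so one extra square root of $D_0$ survives and combines with $\sqrt{D_1}$ into $\sqrt{d_0 d_1}$.

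First I would apply Proposition \ref{weylalg} to write
$$
\int_{G} \vaf(m) \, dm = \int_{I_{g}} \vaf \circ k(t) \, \lambda_{g}(t) \, dt
= \frac{1}{(2\pi)^{g} g!} \int_{I_{g}} \vaf \circ k(t) \, D_{0}(t) \sqrt{D_{1}(t)} \, dt,
$$
and use \eqref{weylalgalc} together with the symmetry of the integrand to pass to the fundamental alcove:
$$
\int_{G} \vaf(m) \, dm = \frac{1}{(2\pi)^{g}} \int_{A_{g}} \vaf \circ k(t) \, D_{0}(t) \sqrt{D_{1}(t)} \, dt.
$$

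Next I would invoke Proposition \ref{IsoViete}\eqref{CHG3}: since $A_{g} \subset C_{g}$ lies in the fundamental chamber and consists of points with distinct coordinates, the Viète map $\mathsf{s}$ restricts to a diffeomorphism from $A_{g}$ onto its image $\mathsf{s}(A_{g})$, which is the interior of $\Sigma_{g}$ (and therefore differs from $\Sigma_{g}$ by a set of Lebesgue measure zero). By Proposition \ref{IsoViete}\eqref{CHG1}, the Jacobian of $\mathsf{s}$ at a point $t \in A_{g}$ satisfies $\card{\det J(\mathsf{s})(t)} = \sqrt{D_{0}(t)}$. Applying the change of variables $s = \mathsf{s}(t)$, and recalling that $D_{0} = d_{0} \circ \mathsf{s}$, $D_{1} = d_{1} \circ \mathsf{s}$ by \eqref{d0sym}, \eqref{d1sym}, together with $\widetilde{\vaf} \circ \mathsf{s}(t) = \vaf \circ k(t)$ by the very definition of $\widetilde{\vaf}$, one obtains
$$
\int_{A_{g}} \vaf \circ k(t) \, D_{0}(t) \sqrt{D_{1}(t)} \, dt
= \int_{\mathsf{s}(A_{g})} \widetilde{\vaf}(s) \, \frac{d_{0}(s) \sqrt{d_{1}(s)}}{\sqrt{d_{0}(s)}} \, ds
= \int_{\Sigma_{g}} \widetilde{\vaf}(s) \sqrt{d_{0}(s) d_{1}(s)} \, ds,
$$
where in the last equality we used that $\Sigma_{g} \setminus \mathsf{s}(A_{g})$ has measure zero (it lies in the locus $d_{0}(s) = 0$). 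Dividing by $(2\pi)^{g}$ yields the stated formula.

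There is no serious obstacle: the proof is essentially bookkeeping once the two previous results (Weyl II and the Jacobian identity for the Viète map) are in hand. The only point requiring a word of care is the justification that $\mathsf{s}(A_{g})$ fills out $\Sigma_{g}$ up to a null set, which follows because the complement is contained in the discriminant hypersurface $\{d_{0} = 0\}$; on this null set both $\nu_{g}$ vanishes and the change of variables is harmless.
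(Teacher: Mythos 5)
Your proof is correct and follows essentially the same route as the paper: start from Proposition \ref{weylalg}, pass to the fundamental alcove via \eqref{weylalgalc}, and perform the change of variables $s = \mathsf{s}(t)$ using the Jacobian identity $\card{\det J(\mathsf{s})} = \sqrt{D_{0}}$ from Proposition \ref{IsoViete}. The paper simply runs the same computation in the opposite direction (starting from the integral over $\Sigma_{g}$ and working back to $G$), but the ingredients and bookkeeping are identical, including the remark that $\Sigma_{g}\setminus\mathsf{s}(A_{g})$ is a null set lying in $\{d_{0}=0\}$.
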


\begin{proof}
By Proposition \ref{change}, we can perform a change of variables from $\Sigma_{g}$ to $A_{g}$, apart from null sets, by putting $s = \mathsf{s}(t)$. If $\varphi \in \VAS$, we have
$$
\int_{\Sigma_{g}} \varphi(s) \frac{ds}{\sqrt{d_{0}(s)}} =
\int_{A_{g}} \varphi(\mathsf{s}(t)) \, dt.
$$
This implies
$$
\int_{\Sigma_{g}} \varphi(s) \nu_{g}(s) \, ds = g!
\int_{A_{g}} \varphi(\mathsf{s}(t)) \lambda_{g}(t).
$$
If $\varphi = \widetilde{\vaf}$, then
$$
\int_{\Sigma_{g}} \widetilde{\vaf}(s) \nu_{g}(s) \, ds =
g! \int_{A_{g}} \vaf \circ k(t) \lambda_{g}(t) \,  dt =
\int_{I_{g}} \vaf \circ k(t) \lambda_{g}(t) \,  dt,
$$
the second equality by using \eqref{weylalgalc}. One concludes with the help of Proposition \ref{weylalg}.
\end{proof}

In other words, if $\phi_{g}$ is the characteristic function of $\Sigma_{g}$, the function $\nu_{g} \, \phi_{g}$ is the joint probability distribution density function of the distribution for the random variables $s_{1}, \dots, s_{g}$.

If $\vat$ is the trace map on $\USp_{2g}$, then
$$\widetilde{\vat}(s)  = s_{1}.$$
One obtains an integral expression of the density by the method of integration along the fibers already used in Remark \ref{Leray}, which reduces here to an application of Fubini's theorem. The linear form $s \mapsto s_{1}$ is a submersion from the open dense subset $U = \mathsf{s}(A_{g})$ of $\Sigma_{g}$ onto $J = (- 2 g, 2 g)$, and if $x \in J$, then 
$$V_{x} = \set{s \in U}{s_{1} = x}$$
is just an intersection with a hyperplane. If
$$
\alpha_{x}(s_{2}, \dots, s_{g}) =
\nu_{g}(x, s_{2}, \dots, s_{g}) \, ds_{2} \wedge \dots \wedge ds_{g},
$$
then
$$
\Phi_{\vat}(z) =
\int_{s_{1} \leq z} \, \nu_{g}(s) ds_{1} \wedge \dots \wedge ds_{g} =
\int_{- 2g}^{z} dx \int_{V_{s}} \alpha_{x}(s_{2}, \dots, s_{g}),
$$
hence:

\begin{proposition}
\label{DensTraceViete}
If $\card{x} < 2 g$, then
$$
f_{\vat}(x) = \int_{V_{x}} \alpha_{x}(s_{2}, \dots, s_{g}). \rlap \qed
$$
\end{proposition}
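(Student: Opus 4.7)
The plan is to apply the symmetric integration formula with $\vaf = \vat$ and then slice the integral over $\Sigma_{g}$ by hyperplanes $s_{1} = $ constant, which is just Fubini's theorem in this setting (the Gelfand-Leray construction of Remark \ref{Leray} in its simplest incarnation, since the ``submersion'' here is a linear form).

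First I would set up the cumulative distribution function using the symmetric integration formula already established. Since $\widetilde{\vat}(s) = s_{1}$, we have for $z \in \RR$
\begin{equation*}
\Phi_{\vat}(z) \;=\; \int_{G} \mathbf{1}_{\vat(m) \leq z} \, dm
\;=\; \int_{\Sigma_{g}} \mathbf{1}_{s_{1} \leq z}(s)\, \nu_{g}(s)\, ds_{1} \cdots ds_{g}.
\end{equation*}
Outside the null set $\Sigma_{g} \setminus U$, where $U = \mathsf{s}(A_{g})$, the density $\nu_{g}$ is smooth, so we may integrate over $U$ without changing the value.

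Next I would apply Fubini's theorem. Since the projection $s \mapsto s_{1}$ is a linear map with image $J = (-2g, 2g)$, and its fibres are the affine hyperplane sections $V_{x}$, we have
\begin{equation*}
\Phi_{\vat}(z) \;=\; \int_{-2g}^{z} \left(\int_{V_{x}} \nu_{g}(x, s_{2}, \dots, s_{g}) \, ds_{2} \cdots ds_{g} \right) dx
\;=\; \int_{-2g}^{z} \left(\int_{V_{x}} \alpha_{x}(s_{2}, \dots, s_{g}) \right) dx,
\end{equation*}
by the very definition of $\alpha_{x}$. Differentiating this identity with respect to $z$ at $z = x$, and using that the inner integral $x \mapsto \int_{V_{x}} \alpha_{x}$ is continuous in $x \in J$ (the density $\nu_{g}$ is a polynomial times $\sqrt{d_{0}d_{1}}$ on $V_{x}$, which varies continuously as the hyperplane moves within $\Sigma_{g}$), gives the required formula $f_{\vat}(x) = \int_{V_{x}} \alpha_{x}(s_{2}, \dots, s_{g})$.

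Essentially no obstacle arises: the argument is just Fubini combined with the fundamental theorem of calculus. The only point to be careful about is that the slicing is carried out over the open dense subset $U \subset \Sigma_{g}$, and that the boundary of $\Sigma_{g}$ meets each hyperplane $\{s_{1} = x\}$ in a set of $(g-1)$-dimensional Lebesgue measure zero; this is guaranteed by Theorem \ref{Image}, which identifies $\Sigma_{g}$ as a semi-algebraic set homeomorphic to a simplex, so its boundary is a finite union of lower-dimensional semi-algebraic pieces. The continuity of $x \mapsto \int_{V_{x}} \alpha_{x}$ then follows from dominated convergence, since $\nu_{g}$ is bounded on $\Sigma_{g}$ and the supports $V_{x}$ are contained in the fixed bounded box of Theorem \ref{Image}.
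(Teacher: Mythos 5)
Your proof is correct and follows essentially the same route as the paper: apply the symmetric integration formula to get $\Phi_{\vat}(z)=\int_{\Sigma_g}\mathbf{1}_{s_1\leq z}\,\nu_g(s)\,ds$, slice by Fubini along $s_1=x$, and differentiate. The extra care you take regarding the measure-zero boundary and continuity of the inner integral is a reasonable elaboration of what the paper leaves implicit.
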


\begin{example}
If $g = 2$, the symmetric alcove $\Sigma_{2}$ is described in Example \ref{Sigma2}. Here,
$$
d_{0}(s) = s_{1}^{2} - 4 s_{2}, \quad
d_{1}(s) = (s_{2} + 4)^{2} - 4 s_{1}^{2}, \quad
\nu_{2}(s) = \frac{1}{4 \pi^{2}} \, \sqrt{d_{0}(s)d_{1}(s)}.
$$
The graph of $\nu_{2}$ is shown in Figure \ref{DensityNu2}. The maximum of $\nu_{2}$ in $\Sigma_{2}$ is attained at the point
$$
s_{0} = (0, - \frac{4}{3}) , \quad \text{with} \quad
\nu_{2}(s_{0}) = \frac{8}{3 \sqrt{3} \pi^{2}} = 0.155 \dots
$$

\bigskip
\begin{figure}[ht!]
\centering
\framebox[8cm][c]{\includegraphics[scale=0.4]{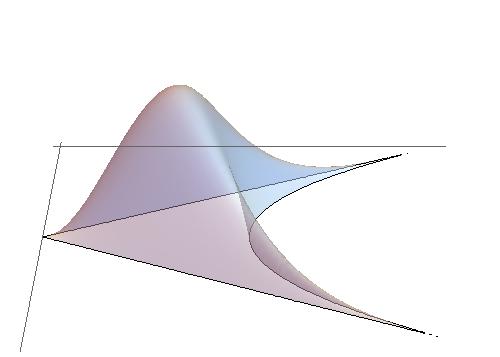}}
\caption{The density $\nu_{2}$.}
\label{DensityNu2}
\end{figure}
\bigskip

By Proposition \ref{DensTraceViete}, we find a definite integral : if $x \geq 0$,
$$
f_{\vat}(x) = \frac{1}{4 \pi^{2}} \int_{2 x - 4}^{x^{2}/4}
\left[ \left( (y + 4)^{2} - 4 x^{2} \right)
\left( x^{2} - 4 y \right) \right]^{\frac{1}{2}} dy.
$$
It can be verified that this formula is in accordance with Theorem \ref{Legendre}.
\end{example}

\begin{example}
If $g = 3$, the symmetric alcove $\Sigma_{3}$ is described in Example \ref{Sigma3}. Here,
\begin{align*}
d_{0}(s) & = s_{1}^{2} s_{2}^{2} - 4 s_{2}^{3} -
4 s_{1}^{3} s_{3} + 18 s_{1} s_{2} s_{3} - 27 s_{3}^{2}, \\
d_{1}(s) & = (2 s_{2} + 8)^{2} - (4 s_{1} + s_{3})^2, \\
\nu_{3}(s) & = \frac{1}{8 \pi^{3}}
 \, \sqrt{d_{0}(s)d_{1}(s)},
\end{align*}
and
$$
V_{x} = \set{s \in \RR^{3}}{d_{0}(s) \geq 0, \ \pm s_{3} + 2 s_{2} \pm 4 x + 8 \geq 0}.
$$
The density is
$$
f_{\vat}(x) = \int_{V_{x}} \nu_{3}(x, s_{2}, s_{3}) ds_{2} ds_{3}.
$$
With this formula in hands, we are able to compute the even moments :
$$1, 1, 3, 15, 104, 909, 9\,449, 112\,398, 1\,489\,410, 21\,562\,086 \dots$$
This sequence is in accordance with the results of \cite[Sec. 4]{Ked-Su} and the sequence A138540 in the OEIS \cite{Sloane}. Actually, it is faster to compute this sequence by noticing that, according to Weyl's integration formula of Proposition \ref{weylalg}, the characteristic function of $\vat$ is given, for $y \in \RR$, by
$$
\widehat{f}_{\vat}(y) = \frac{1}{8 \pi^{3}} \int_{I_{3}} D_{0}(t) \sqrt{D_{1}(t)} \cos(y(t_{1} + t_{2} + t_{3})) \, dt_{1}  dt_{2}  dt_{3}.
$$
An implementation of this integral in the \emph{Mathematica} software gives
\begin{multline*}
\widehat{f}_{\vat}(y) = \\ 24 \left(
- \frac{4 J_{1}(2 y)^{3}}{y^{5}}
+ \frac{11 J_{1}(2 y)^{2}J_{2}(2 y)}{y^{6}}
- \frac{2(3 + y^{2}) J_{1}(2 y) J_{2}(2 y)^{2}}{y^{7}}
+ \frac{5 J_{2}(2 y)^{3}}{y^{6}}
\right),
\end{multline*}
and it suffices to apply \eqref{CaracMoments} to obtain the moments. An approximation of $f_{\vat}$ to any order in $L^{2}([-6,6], dx)$ can be obtained from the sequence of moments, using Legendre polynomials, which form an orthogonal basis of $L^{2}([-1,1], dx)$. For instance the maximum of $f_{\vat}$ is reached for $x = 0$, and we find
$$
f_{\vat}(0) = 0.396\,467 \dots
$$
The graph of $f_{\vat}$ obtained by this approximation process is drawn in Figure \ref{Trace3}.

\begin{figure}[!ht]
\centering
\boxed{\includegraphics[scale=0.5]{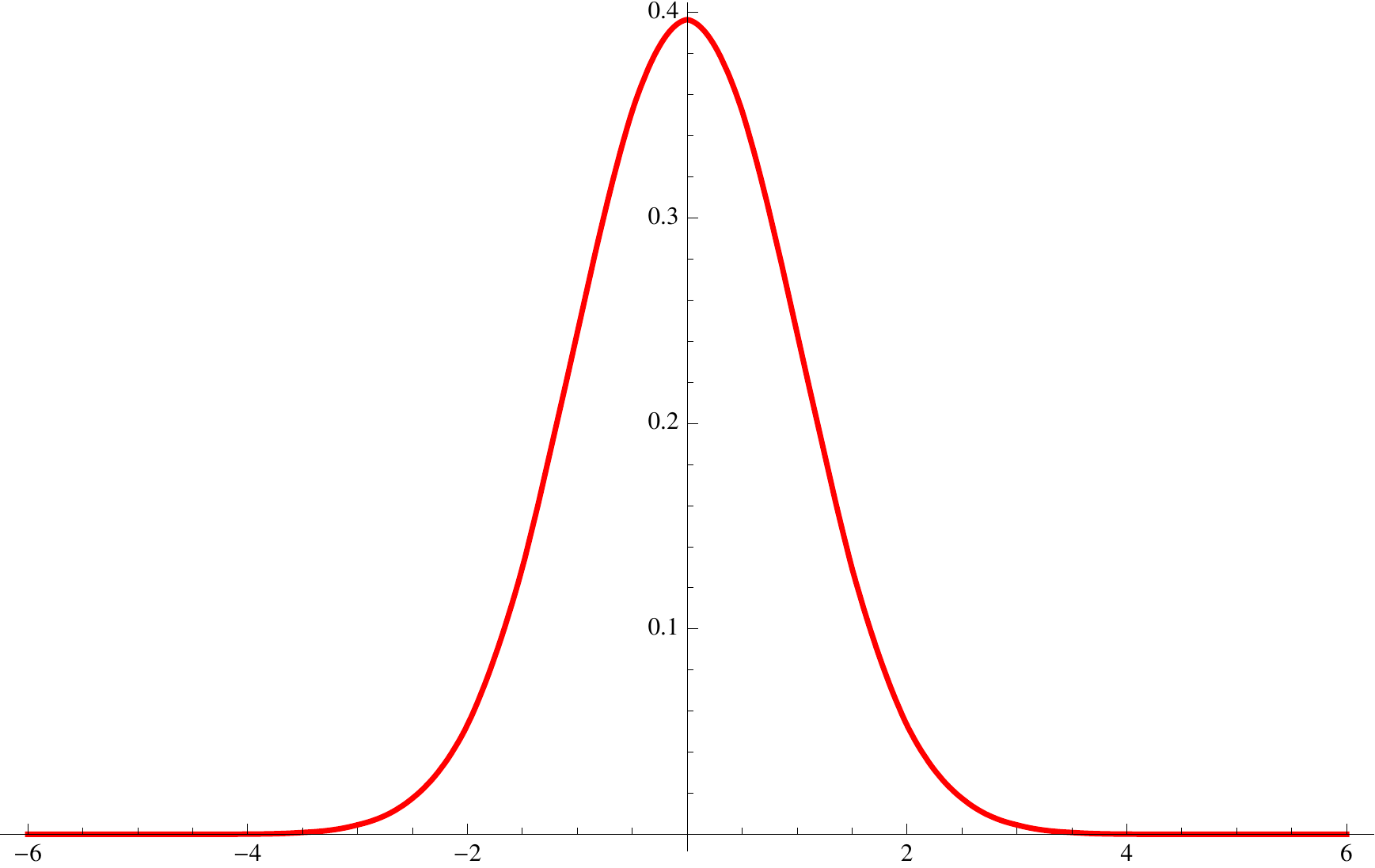}}
\caption{Density of the distribution of $\vat$, case $g = 3$}
\label{Trace3}
\end{figure}
\end{example}

As a final instance, we come back to the case $g = 2$ and apply the symmetric integration formula to the distribution of the character $\vat_{2}$ of the exterior power $\wedge^{2} \pi$ of the identity representation $\pi$ of $\USp_{4}$ on $\CC^{4}$, namely
$$
\vat_{2} \circ h(\theta) = 2 + 4 \cos \theta_{1} \cos \theta_{2}, \qquad
\widetilde{\vat_{2}}(s)  = s_{2} + 2.
$$
The density of $\vat_{2}$ is given by
$$
f_{\vat_{2}}(x) = \int_{I_{\pm}}
\sqrt{16 + 8x + x^{2} - 4z^{2}} \sqrt{z^{2} - 4x} \ dz,
$$
with
\begin{align*}
I_{-} & = \left(-\dfrac{x + 4}{2},\dfrac{x + 4}{2} \right) & \text{if} & \ - 4 < x < 0, \\
I_{+} & = \left(-\dfrac{x + 4}{2},-2\sqrt{x}\right) \cup \left(2\sqrt{x},\dfrac{x + 4}{2} \right) &
\text{if} & \ 0 < x < 4.
\end{align*}

\bigskip
\begin{figure}[!ht]
\centering
\boxed{\includegraphics[scale=0.4]{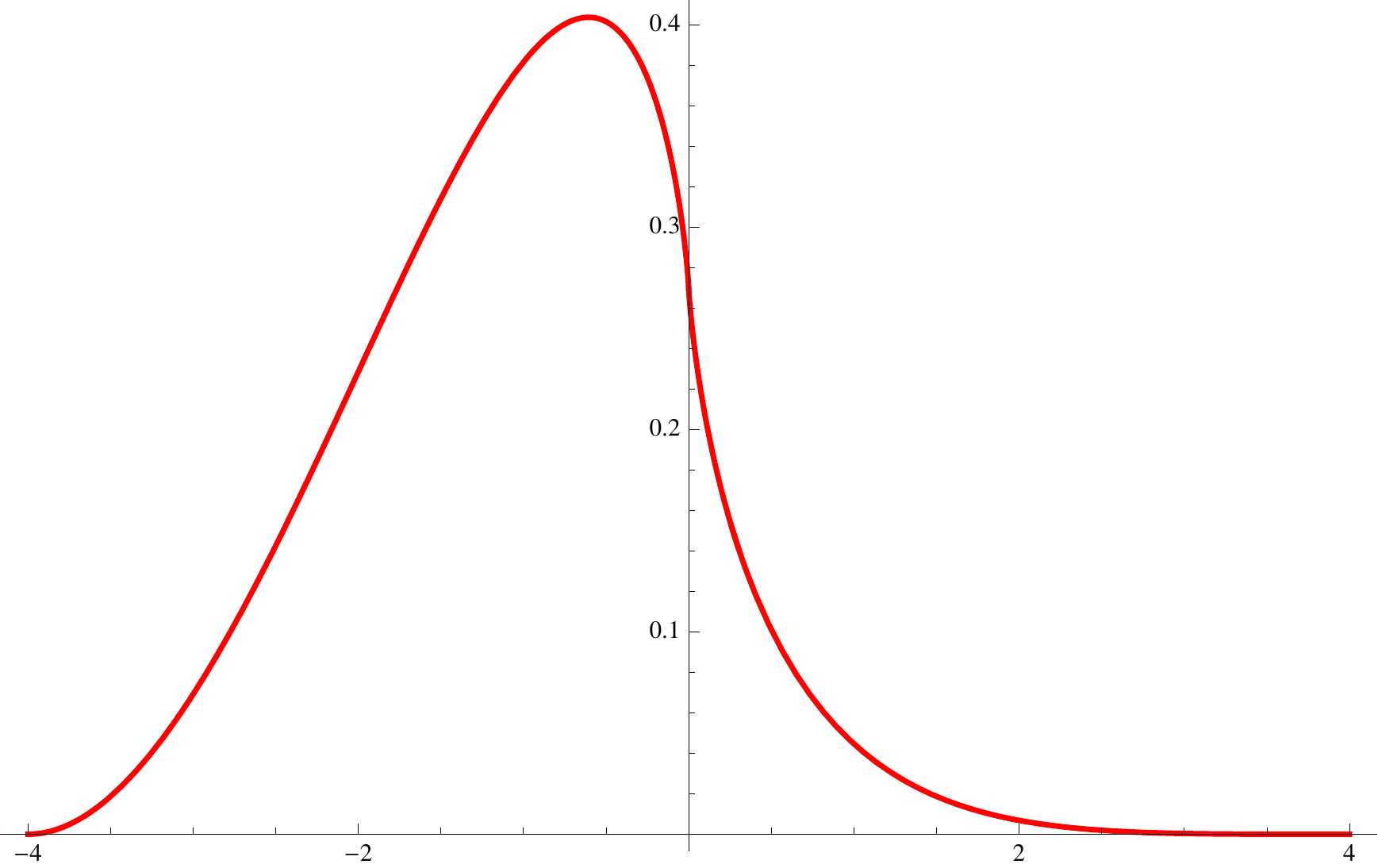}}
\caption{Density of the distribution of $\vat_{2}$, case $g = 2$}
\label{densiteS2}
\end{figure}
\bigskip

The implementation of this integral in the \emph{Mathematica} software gives the sequence of moments (see below), from which one deduces:

\begin{proposition}
\label{ditribS}
Assume $\card{x} < 4$. Then $f_{\vat_{2}}(x)$ is equal to
$$
\frac{\sgn(x)}{24 \pi^{2}}
\left(x (x^{2} - 24 x + 16) E\left(1 - \frac{16}{x^{2}} \right) +
4 (3 x^{2} - 8 x + 48) K\left(1 - \frac{16}{x^{2}} \right) \right). \qed
$$
\end{proposition}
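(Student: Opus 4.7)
The plan is to evaluate the integral for $f_{\vat_{2}}(x)$ directly, reducing it to a combination of complete elliptic integrals $K$ and $E$, and then rewriting the answer in the modulus $m_{*} = 1 - 16/x^{2}$.

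First, exploit the evenness of the integrand in $z$ and set $u = z^{2}$ to obtain, up to the multiplicative constant $1/(4\pi^{2})$, an integral of the form
\[
\int \frac{\sqrt{(A-u)(u-B)}}{\sqrt{u}}\,du,
\]
with $A = (x+4)^{2}/4$ and $B = 4x$, taken over $(B,A)$ when $0<x<4$ and over $(0,A)$ when $-4<x<0$. Expanding $(A-u)(u-B) = -u^{2} + (A+B)u - AB$ and dividing by $\sqrt{u(A-u)(u-B)}$ splits this into the three standard pieces $\int u^{j}\,du/\sqrt{u(A-u)(u-B)}$ for $j = 0, 1, 2$.

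Second, handle each of the three pieces by a trigonometric substitution. For $x > 0$ I would use $u = A - (A-B)\sin^{2}\phi$, producing the modulus $m = (A-B)/A = ((x-4)/(x+4))^{2}$; for $x < 0$ I would use $u = A\sin^{2}\phi$, producing $m = A/(A-B) = ((x+4)/(x-4))^{2}$. Combined with the standard identities for $K(m)$ and $E(m)$ and the auxiliary reduction
\[
\int_{0}^{\pi/2}(1-m\sin^{2}\phi)^{3/2}\,d\phi = \frac{2(2-m)E(m) - (1-m)K(m)}{3}
\]
(obtained by integrating the total derivative of $\sin\phi\cos\phi(1-m\sin^{2}\phi)^{3/2}$ over $[0,\pi/2]$), this yields a closed form linear in $K(m)$ and $E(m)$ with polynomial coefficients in $x$.

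Third, convert to the modulus $m_{*}$. Apply Landen's ascending transformation $K(4\sqrt{m}/(1+\sqrt{m})^{2}) = (1+\sqrt{m})K(m)$ together with its counterpart $E(4\sqrt{m}/(1+\sqrt{m})^{2}) = [2E(m) - (1-m)K(m)]/(1+\sqrt{m})$ to rewrite the expression in the intermediate modulus $M = 1 - x^{2}/16$; then use the imaginary-modulus identities
\[
K(-n) = \frac{1}{\sqrt{1+n}}\,K\!\left(\frac{n}{1+n}\right), \qquad E(-n) = \sqrt{1+n}\,E\!\left(\frac{n}{1+n}\right),
\]
applied with $n = 16/x^{2} - 1$ (so that $n/(1+n) = M$ and $-n = m_{*}$), to replace $K(M), E(M)$ by $K(m_{*}), E(m_{*})$. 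Regrouping the polynomial coefficients in $x$ produces the stated expression; the $\sgn(x)$ prefactor appears because these conversion identities involve $|x|$ rather than $x$. The delicate step is this third one, since $m_{*} < 0$ throughout $|x| < 4$ forces us to interpret $K(m_{*})$ and $E(m_{*})$ through their defining real integrals, and the Landen and imaginary-modulus transformations must be tracked carefully to produce the exact polynomial coefficients. As an independent consistency check, one may compute the moments of the proposed closed form by power-series expansion of $K$ and $E$ and compare with the moment sequence obtained via the \emph{Mathematica} implementation of the integral alluded to just before the proposition statement.
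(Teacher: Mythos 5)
The paper's own proof is simply an appeal to a computer algebra system, so your proposal follows a genuinely different, by-hand route, and the outline is sound. The substitution $u = z^{2}$ correctly reduces the integrand to $(A-u)(u-B)/\sqrt{u(A-u)(u-B)}$ over $(B,A)$ or $(0,A)$; the two trigonometric substitutions produce the moduli $m = ((4\mp x)/(4\pm x))^{2}$, both carried by the ascending Landen transformation to the common value $M = 1 - x^{2}/16$; and the imaginary-modulus identities with $n = 16/x^{2}-1 > 0$ then land on $m_{*} = 1 - 16/x^{2} < 0$, the $\sgn(x)$ prefactor coming from $\sqrt{1+n} = 4/\lvert x\rvert$. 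Your reduction of $\int_{0}^{\pi/2}(1-m\sin^{2}\phi)^{3/2}\,d\phi$ to $\tfrac{1}{3}\bigl[2(2-m)E(m)-(1-m)K(m)\bigr]$ is also correct, and the resulting closed form can be spot-checked: it gives $f_{\vat_{2}}(\pm 4)=0$, and $f_{\vat_{2}}(0)=8/(3\pi^{2})$ since $E(1-16/x^{2})\sim 4/\lvert x\rvert$ and $K(1-16/x^{2})\to 0$ as $x\to 0$. Two points to tighten before this is a complete proof: after pulling $\sqrt{(x+4)^{2}-4u} = 2\sqrt{A-u}$ and the evenness halving through, the constant in front of the $u$-integral is $1/(2\pi^{2})$, not $1/(4\pi^{2})$; and the final assembly of the polynomial coefficients --- the step most prone to sign and factor slips, especially since the two branches $x>0$ and $x<0$ must be shown to coalesce into a single expression --- is asserted rather than performed, so it should be carried out explicitly or at least cross-checked against the moment sequence as you propose.
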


The maximum of $f_{\vat_{2}}$ is reached for $x_{0} = - 0.605\dots$, and $f_{\vat_{2}}(x_{0}) = 0.403 \dots$ Moreover
$$
f_{\vat_{2}}(0) = \frac{8}{3 \pi^{2}} = 0.270 \dots
$$
This function is continuous, but the derivative has a logarithmic singularity:
$$f_{\vat_{2}}'(x) \sim \frac{\log x^{2}}{\pi^{2}}, \quad x \rightarrow 0.$$
The graph of $f_{\vat_{2}}$ is shown in Figure \ref{densiteS2}. The moments $M_{n}$ of $f_{\vat_{2}}$ are obtained by numerical integration:
$$1, -1, 2, - 4,10, - 25, 70, - 196, 588, - 1764 \dots$$
Hence, the random variable $\vat_{2}$ has mean $-1$ and variance $2$. This sequence is, up to sign, the sequence A005817 in the OEIS \cite{Sloane}, such that
$$
M_{2 n}(\vat_{2}) = C_{n} C_{n + 1}, \quad M_{2 n + 1} = - C_{n + 1}^{2},
$$
where
$$
C_{n} = \frac{1}{n + 1}\binom{2 n}{n}
$$
is the $n$th \emph{Catalan number}.

\begin{remark}
\label{SecondTrace}
The representation $\wedge^{2} \pi$ is reducible, cf. for instance Lemma \ref{WeightDecomp} in the appendix. Actually, the two fundamental representations of $\USp_{2}$ are the identity representation $\pi = \pi_{1}$ with character $\vat$ and a representation $\pi_{2}$ of dimension $5$ and character $\boldsymbol{\chi}_{2}$ satisfying
$$
\boldsymbol{\chi}_{2} \circ h(\theta) = 1 + 4 \cos \theta_{1} \cos \theta_{2}, \qquad
\widetilde{\boldsymbol{\chi}_{2}}(s)  = s_{2} + 1.
$$
The representation $\pi_{2}$ is equivalent to the representation corresponding to the morphism of $\USp_{2}$ onto $\SO_{5}$. Since $\vat_{2} = \boldsymbol{\chi}_{2} + 1$, we have
$$\wedge^{2} \pi = \pi_{2} \oplus 1,$$
and $\widetilde{\vat_{2}}(s)  = s_{2} + 2$: this relation is an instance of Theorem \ref{TraceToSym} in the appendix. The random variable $\boldsymbol{\chi}_{2}$, with values on $[-3, 5]$, is standardized, by Remark \ref{standard}. The moments of $f_{\boldsymbol{\chi}_{2}}$ are
$$0,1,0,3,1,15,15,105,190,945 \dots$$
in accordance with the sequence A095922 in the OEIS \cite{Sloane}.
\end{remark}

\appendix

\section{The character ring of $G$}
\label{appendix}

The \emph{character ring} $R(G)$ of $G = \USp_{2g}$ is the subring of $\VAG$ generated, as a $\ZZ$-module, by the characters of continuous representations of $G$ on finite dimensional complex vector spaces. Since every representation of $G$ is semi-simple, the $\ZZ$-module $R(G)$ is free and admits as a basis the set $\widehat{G}$ of characters of irreducible representations of $G$ :
$$R(G) = \sum_{\tau \in \widehat{G}} \ZZ \tau.$$
The \emph{virtual characters} are the elements of $R(G)$, and the characters correspond to the additive submonoid of sums over $\widehat{G}$ with non-negative coefficients. The functions $\theta \mapsto e^{i \theta_{j}} (1 \leq i \leq g)$ make up a basis of the discrete group $\TT^{g}$, and if $h(\theta)$ is as in \eqref{exptore}, the map $h^{*}: f \mapsto f \circ \h$ defines an isomorphism from the group $\mathsf{X}(T)$ of characters of $T$ to the group $\mathsf{X}(\TT^{g})$. Hence, if $\ZZ[\mathsf{X}(T)]$ is the group ring, we have a ring isomorphism
$$
\begin{CD}
h^{*}: \ZZ[\mathsf{X}(T)] & @>{\sim}>> &
\ZZ[\{e^{i \theta_{j}}, e^{- i \theta_{j}}\}].
\end{CD}
$$
Let $\ZZ[\mathsf{X}(T)]^{W}$ be the subring of elements invariants under $W$. Recall that the restriction map
$$
\begin{CD}
R(G) & @>{\sim}>> & \ZZ[\mathsf{X}(T)]^{W}
\end{CD}
$$
is a ring isomorphism \cite[Ch. 9, \S 7, n$^\circ$3, Cor., p. 353]{BkiLIE79en}. From the structure of $W$, we deduce that $h^{*}$ induces a ring isomorphism
\begin{equation}
\label{BasicIsomPoly}
\begin{CD}
R(G) & @>{\sim}>> & \ZZ[2 \cos \theta_{1}, \dots 2 \cos \theta_{g}]^{\sym}.
\end{CD}
\end{equation}
and, by putting $(2 \cos \theta_{1}, \dots 2 \cos \theta_{g}) = (t_{1}, \dots, t_{g})$, a ring isomorphism
\begin{equation}
\label{SecondIsomPoly}
\begin{CD}
R(G) & @>{\sim}>> & \ZZ[t_{1}, \dots t_{g}]^{\sym}.
\end{CD}
\end{equation}
On the other hand, the application $\varphi \mapsto \varphi \circ \mathsf{s}$, where $\mathsf{s}$ is the Vi\`ete map, induces the classical isomorphism
$$
\begin{CD}
\mathsf{s}^{*} : \ZZ[s_{1}, \dots, s_{g}] & @>{\sim}>> & \ZZ[t_{1}, \dots, t_{g}]^{\sym}.
\end{CD}
$$
If $\vaf \in R(G)$, we denote by $\widetilde{\vaf}$ the unique polynomial in $\ZZ[s_{1}, \dots, s_{g}]$ such that
$$
\widetilde{\vaf} \circ \mathsf{s}(2 \cos \theta_{1}, \dots, 2 \cos \theta_{g}) =
\vaf \circ h(\theta_{1}, \dots, \theta_{g}).
$$
We obtain:

\begin{proposition}
\label{Chevalley}
If $G = \USp_{2g}$, the map $\vaf \mapsto \widetilde{\vaf}$ is  a ring isomorphism
$$
\begin{CD}
R(G) & @>{\sim}>> & \ZZ[s_{1}, \dots, s_{g}]. \rlap \qed
\end{CD}
$$
\end{proposition}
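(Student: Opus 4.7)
The statement is essentially a concatenation of three isomorphisms already set up in the excerpt, so the plan is to verify that the composition is exactly the map $\vaf \mapsto \widetilde{\vaf}$.

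First, I would invoke the ring isomorphism
\[
h^{*}: R(G) \xrightarrow{\sim} \ZZ[2\cos\theta_{1},\dots,2\cos\theta_{g}]^{\sym}
\]
of \eqref{BasicIsomPoly}, which itself is obtained from the classical restriction isomorphism $R(G)\xrightarrow{\sim}\ZZ[\mathsf{X}(T)]^{W}$ together with the description of $W = \mathfrak{S}_{g}\ltimes N$ acting on $\TT^{g}$. Then I would use the substitution $t_{j}=2\cos\theta_{j}$ (which is merely a relabeling of generators) to convert this into \eqref{SecondIsomPoly},
\[
R(G)\xrightarrow{\sim} \ZZ[t_{1},\dots,t_{g}]^{\sym}, \qquad \vaf\mapsto \vaf\circ k,
\]
where $k$ is the map defined in Section~\ref{sec_WeilInt}.

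Next, I would apply the inverse of the Viète isomorphism
\[
\mathsf{s}^{*}: \ZZ[s_{1},\dots,s_{g}]\xrightarrow{\sim}\ZZ[t_{1},\dots,t_{g}]^{\sym}.
\]
That this is a ring isomorphism is the fundamental theorem of symmetric polynomials, valid over any commutative ring; it follows, for instance, from Proposition~\ref{IsoViete} read algebraically, or simply from Newton's relations combined with Girard's formula (Proposition~\ref{Albert}), which gives the inverse expression explicitly. Composing, the map $\vaf\mapsto (\mathsf{s}^{*})^{-1}(\vaf\circ k)$ is a ring isomorphism $R(G)\xrightarrow{\sim}\ZZ[s_{1},\dots,s_{g}]$.

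Finally, I would observe that by definition $\widetilde{\vaf}$ is the unique polynomial in the $s_{i}$ with $\widetilde{\vaf}\circ\mathsf{s}=\vaf\circ k$, so $\widetilde{\vaf}=(\mathsf{s}^{*})^{-1}(\vaf\circ k)$, which is exactly the composition above; hence $\vaf\mapsto\widetilde{\vaf}$ is the desired ring isomorphism. There is no real obstacle here, since the three ingredients are already in place; the only point that warrants explicit mention is that uniqueness of $\widetilde{\vaf}$ is guaranteed by the injectivity of $\mathsf{s}^{*}$ (equivalently, by the fact that $\mathsf{s}$ is surjective onto $\CC^{g}$), which ensures the map is well-defined and multiplicative.
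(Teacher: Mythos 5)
Your proof is correct and follows exactly the route the paper takes: the composition of the restriction isomorphism $R(G)\xrightarrow{\sim}\ZZ[\mathsf{X}(T)]^{W}$, the change of generators $t_{j}=2\cos\theta_{j}$ giving \eqref{SecondIsomPoly}, and the fundamental theorem of symmetric polynomials for $\mathsf{s}^{*}$, with the observation that $\widetilde{\vaf}=(\mathsf{s}^{*})^{-1}(\vaf\circ k)$ by definition. The paper presents these three isomorphisms in the lines immediately preceding the proposition and treats the composition as self-evident (hence the inline $\qed$), so your proposal simply makes that implicit chain explicit.
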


Recall from Remark \ref{Phi2g} that $\Phi_{2 g}$ is the set of monic palindromic polynomials of degree $2 g$ in $\CC[u]$ with all roots on the unit circle. We write a typical element of $\Phi_{2 g}$ as
$$
p_{a}(u) = \sum_{n = 0}^{2 g} (- 1)^{n} a_{n} u^{2 g - n},
$$
where $a_{2 g - n} = a_{n}$ for $0 \leq n \leq g$. Moreover $p_{a}(u) = u^{2 g} p_{a}(u^{-1})$. The roots of $p_{a}$ come by pairs : if $p_{a}$ is monic, then
$$
p_{a}(u) = \prod_{j = 1}^{g} (u - e^{i \theta_{j}})(u - e^{- i \theta_{j}}) =
\prod_{j = 1}^{g} (u^{2} - u t_{j} + 1),
$$
with $t_{j} = 2 \cos \theta_{j}$, and the coefficients $a_{n}$ are symmetric polynomials in the variables $\{e^{i \theta_{j}},e^{- i \theta_{j}}\}$, invariant under conjugation.

\begin{theorem}
\label{CoefToSym}
If $t = (t_{1}, \dots, t_{g}) \in \CC^{g}$, and if $0 \leq n \leq 2 g$, define $a_{n}(t)$ by the relation
$$
\prod_{j = 1}^{g} (u^{2} - u t_{j} + 1) = \sum_{n = 0}^{2 g} (- 1)^{n} a_{n}(t) u^{2 g - n}.
$$
If $0 \leq n \leq g$, then
$$a_{n}(t) = \sum_{j = 0}^{n/2} \binom{g + 2 j - n}{j} s_{n - 2 j}(t),$$
where $s_{0}(t) = 1$ and $s_{n}(t)$ is the elementary symmetric polynomial of degree $n$.
\end{theorem}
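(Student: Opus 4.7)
The plan is to expand the product combinatorially and read off the coefficient of $u^{2g-n}$ directly. Each factor $u^{2} - u t_{j} + 1$ contributes one of the three monomials $u^{2}$, $-u t_{j}$, or $1$. Accordingly, choosing a term in each factor amounts to specifying an ordered partition $\{1,\dots,g\} = A \sqcup B \sqcup C$, where $A$ indexes the factors contributing $u^{2}$, $B$ the factors contributing $-ut_{j}$, and $C$ the factors contributing $1$. With $|A| = a$, $|B| = b$, $|C| = c$, the corresponding monomial in the expansion is $(-1)^{b}\, u^{2a+b}\,\prod_{j \in B} t_{j}$.

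Next I would fix the exponent $2a+b = 2g-n$ and use $a+b+c = g$ to eliminate $a$, obtaining the constraint $b + 2c = n$. Summing over all admissible tripartitions, the coefficient of $u^{2g-n}$ becomes
\begin{equation*}
\sum_{\substack{b+2c=n\\ b,c\geq 0}} (-1)^{b} \sum_{\substack{B \subset \{1,\dots,g\}\\ |B|=b}}\; \sum_{\substack{C \subset \{1,\dots,g\}\setminus B\\ |C|=c}} \prod_{j \in B} t_{j}.
\end{equation*}
For fixed $B$, the inner sum over $C$ gives $\binom{g-b}{c}$, and the sum over $B$ of $\prod_{j \in B} t_{j}$ is by definition $s_{b}(t)$. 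Hence the coefficient of $u^{2g-n}$ equals
\begin{equation*}
\sum_{\substack{b+2c=n\\ b,c\geq 0}} (-1)^{b}\binom{g-b}{c}\, s_{b}(t).
\end{equation*}

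Finally, since this is the coefficient of $u^{2g-n}$ in the product, it equals $(-1)^{n} a_{n}(t)$ by the defining relation. Reindexing via $c = j$ and $b = n-2j$, and noting that $(-1)^{n-2j} = (-1)^{n}$, the signs cancel and one obtains
\begin{equation*}
a_{n}(t) = \sum_{j=0}^{\lfloor n/2\rfloor} \binom{g+2j-n}{j}\, s_{n-2j}(t),
\end{equation*}
which is the asserted formula (note that if $2j > n$ the term $s_{n-2j}$ is zero, and if $n-2j > g$ then $\binom{g+2j-n}{j}=0$, so the range of summation is unambiguous for $0 \leq n \leq g$). I do not anticipate a serious obstacle; the only place to be careful is the bookkeeping of the three contributions together with the constraint $b + 2c = n$ coming from the exponent $2g-n$, and the verification that $\binom{g-b}{c} = \binom{g+2j-n}{j}$ under the reindexing, which is immediate.
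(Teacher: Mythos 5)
Your proof is correct, and it takes a somewhat different route from the paper's. The paper proves an intermediate result (Lemma~\ref{MapQ}) about the map $\mathbf{Q}: h \mapsto u^{g} h(u + u^{-1})$ for an \emph{arbitrary} polynomial $h_s(u) = \sum_{n}(-1)^{n} s_{n} u^{g-n}$: it expands each power $(u+u^{-1})^{g-k}$ by the binomial theorem and collects coefficients, relying (implicitly, via the self-reciprocity of $u^{g}h(u+u^{-1})$) on the symmetry $q_{n}=q_{2g-n}$. Theorem~\ref{CoefToSym} is then the specialization $h_s = h_t = \prod_{j}(u-t_{j})$, which uses the identity $\prod_{j}(u^{2}-ut_{j}+1) = u^{g} h_t(u+u^{-1})$. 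You bypass that detour entirely: you expand $\prod_{j}(u^{2}-ut_{j}+1)$ directly by choosing one of the three terms from each factor, organize the choices as a tripartition $\{1,\dots,g\} = A \sqcup B \sqcup C$, and extract the coefficient of $u^{2g-n}$. The underlying binomial computation is essentially the same one (the factor $\binom{g-b}{c}$ you get corresponds to the binomial expansion of $(u+u^{-1})^{g-b}$ after absorbing the $u^{b}$ factor), but your version is more elementary and self-contained, and avoids the small sign/indexing subtlety the paper handles via palindromicity. What the paper's route buys, and yours does not, is the general Lemma~\ref{MapQ} and the auxiliary map $\mathsf{q}$, which the appendix reuses to set up the commutative diagram relating $\ZZ[a_{1},\dots,a_{g}]$, $\ZZ[s_{1},\dots,s_{g}]$, and $\ZZ[t_{1},\dots,t_{g}]^{\sym}$, and which is tied to the conceptual Weil-polynomial / real-Weil-polynomial correspondence. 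As a proof of the stated theorem alone, yours is complete and arguably cleaner.
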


We deduce Theorem \ref{CoefToSym} from the following lemma.

\begin{lemma}
\label{MapQ}
If $s = (s_{0}, \dots, s_{g}) \in \CC^{g + 1}$, let
$$h_{s}(u) = \sum_{n = 0}^{g} (- 1)^{n} s_{n} u^{g - n},$$
and for $0 \leq n \leq 2 g$, define $q_{n}(s)$ by the relation
$$
u^{g} h_{s}(u + u^{-1}) = \sum_{n = 0}^{2 g} (- 1)^{n} q_{n}(s) u^{2 g - n}.
$$
If $0 \leq n \leq g$, then
$$
q_{n}(s) = \sum_{j = 0}^{n/2} \binom{g + 2 j - n}{j}  s_{n - 2 j}.
$$
\end{lemma}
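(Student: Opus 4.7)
The plan is to prove Lemma~\ref{MapQ} by a direct expansion of $u^{g} h_{s}(u + u^{-1})$, using the binomial theorem and then collecting coefficients according to the power of $u$.

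First I would write
$$
u^{g} h_{s}(u + u^{-1}) = \sum_{k = 0}^{g} (-1)^{k} s_{k} \, u^{g} (u + u^{-1})^{g - k},
$$
and apply the binomial theorem to each factor $(u + u^{-1})^{g - k}$, obtaining
$$
u^{g} (u + u^{-1})^{g - k} = \sum_{i = 0}^{g - k} \binom{g - k}{i} u^{2 g - k - 2 i}.
$$
Substituting back and interchanging the two sums yields
$$
u^{g} h_{s}(u + u^{-1}) = \sum_{k = 0}^{g} \sum_{i = 0}^{g - k} (-1)^{k} \binom{g - k}{i} s_{k} \, u^{2 g - k - 2 i}.
$$

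Next I would extract the coefficient of $u^{2 g - n}$ for $0 \leq n \leq g$. Setting $n = k + 2 i$, i.e.\ $k = n - 2 j$ with $j = i$, the condition $0 \le i \le g - k$ together with $0 \le k \le g$ becomes, for $n \le g$, simply $0 \le j \le \lfloor n/2 \rfloor$ (the upper bound $i \le g - k$ reads $j \le g - n + 2j$, i.e.\ $j \ge n - g$, which is automatic here). Since $k$ and $n$ then have the same parity, $(-1)^{k} = (-1)^{n}$, so the coefficient of $u^{2 g - n}$ equals
$$
(-1)^{n} \sum_{j = 0}^{\lfloor n/2 \rfloor} \binom{g - (n - 2 j)}{j} s_{n - 2 j} = (-1)^{n} \sum_{j = 0}^{\lfloor n/2 \rfloor} \binom{g + 2 j - n}{j} s_{n - 2 j}.
$$
Comparing with the defining expansion $u^{g} h_{s}(u + u^{-1}) = \sum_{n = 0}^{2 g} (-1)^{n} q_{n}(s) u^{2 g - n}$ gives the claimed formula for $q_{n}(s)$.

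There is no real obstacle in this argument; it is essentially a rearrangement of a double sum. The only point requiring mild care is bookkeeping of the summation ranges: one must check that, for $0 \leq n \leq g$, the constraint $i \le g - k$ does not cut off any terms, so the upper bound on $j$ is exactly $\lfloor n/2 \rfloor$ rather than something smaller. (For $n > g$ one would also need to track the lower bound $j \ge n - g$, but that case is not asserted by the lemma.) Theorem~\ref{CoefToSym} then follows immediately by applying Lemma~\ref{MapQ} with $s_{0} = 1$ and identifying $q_{n}(\mathsf{s}(t)) = a_{n}(t)$ via $h_{s}(u + u^{-1}) = \prod_{j}(u + u^{-1} - t_{j})$ evaluated after multiplication by $u^{g}$.
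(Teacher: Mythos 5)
Your proof is correct and uses essentially the same method as the paper's: expand $(u+u^{-1})^{g-k}$ by the binomial theorem, interchange sums, and reindex to read off the coefficient. The only (cosmetic) difference is bookkeeping: you track the exponent $2g-k-2i$ and so read off the coefficient of $u^{2g-n}$ directly, while the paper tracks the complementary exponent $k+2j$ and implicitly relies on the palindromy of $u^{g}h_{s}(u+u^{-1})$ (i.e.\ $q_n=q_{2g-n}$) to match its computation with the definition of $q_n$; your version is a touch cleaner but the argument is the same.
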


\begin{proof}
We have
$$
u^{g} h(u + u^{-1}) = u^{g} \sum_{k = 0}^{g} (- 1)^{k} s_{k} (u + u^{-1})^{g - k}.
$$
Since
$$
u^{g} (u + u^{-1})^{g - k} =
u^{g}  \sum_{j = 0}^{g - k} \binom{g - k}{j} (u^{-1})^{g - k - j} u^{j} =
\sum_{j = 0}^{g - k} \binom{g - k}{j} u^{k + 2 j},
$$
one finds
$$
u^{g} h(u + u^{-1}) =
\sum_{j + k \leq g, j \geq 0, k\geq 0} (- 1)^{k} \binom{g - k}{j}  s_{k} u^{k + 2 j}.
$$
Let $k + 2 j = n$. Then $n$ runs over the full interval $[0, 2 g]$ and
$$
j \geq 0 \ \text{and} \ k \geq 0   \ \text{and} \ j + k \leq g
\quad \Longleftrightarrow \quad
j \geq 0 \ \text{and} \ 2 j \leq n \ \text{and} \ j \geq n - g.
$$
Hence, if $1 \leq n \leq 2 g$, we have
$$
q_{n}(s) = \sum_{j = \max(0,n - g)}^{n/2} \binom{g + 2 j - n}{j}  s_{n - 2 j} ,
$$
and the result follows.
\end{proof}

If $p$ is a Weil polynomial and if $p(u) = u^{g}  h(u + u^{-1})$, then $h$ has real roots and is called the \emph{real Weil polynomial} associated to $p$.

\begin{proof}[Proof of Theorem \ref{CoefToSym}]
In Lemma \ref{MapQ}, assume that
$$
h_{s}(u) = \prod_{j = 1}^{g} (u - t_{j}).
$$
Then $s_{n} = s_{n}(t)$, where $t = (t_{1}, \dots, t_{g})$, and
$$u^{g} h_{s}(u + u^{-1}) = \prod_{j = 1}^{g} (u^{2} - u t_{j} + 1).$$
Hence, $a_{n}(t) = q_{n}(1, s_{1}(t), \dots, s_{n}(t))$.
\end{proof}

Define an endomorphism $\mathsf{q}$ of $\CC^{g + 1}$ by 
$$
\mathsf{q}: (s_{0}, \dots, s_{g}) \mapsto (q_{0}(s), \dots, q_{g}(s)).
$$
By Lemma \ref{MapQ}, the square matrix of order $g + 1$ associated to $\mathsf{q}$ is unipotent and lower triangular, with coefficients in $\NN$. For instance:
\begin{equation}
\label{exampleq23}
\text{if} \ g = 2 : \mathsf{q} =
\begin{pmatrix}
 1 & 0 & 0 \\
 0 & 1 & 0 \\
 2 & 0 & 1 \\
\end{pmatrix} ;
\quad \text{if} \ g = 3 : \mathsf{q} =
\begin{pmatrix}
1 & 0 & 0 & 0 \\
0 & 1 & 0 & 0 \\
3 & 0 & 1 & 0 \\
0 & 2 & 0 & 1 \\
\end{pmatrix}.
\end{equation}

\begin{remark}
A reciprocal of the map
$\mathbf{Q}: h \mapsto u^{g}  h(u + u^{-1})$ is constructed as follows. If $n \geq 1$, let $T_{n}(u)$ be the $n$-th \emph{Chebyshev polynomial} \cite[p. 993]{GR}, and $c_{n}(u) = 2 T_{n}(u/2)$, in such a way that
$$
c_{n}(u + u^{- 1}) = u^{n} + u^{- n} \quad \text{if} \ n \geq 1.
$$
Moreover put $c_{0}(u) = 1$. If $p_{a} \in \Phi_{2 g}$ as above and if
$$
[\mathbf{R} p] (u) = \sum_{n = 0}^{g} (- 1)^{n} a_{n} c_{g - n}(u),
$$
it is easy to see that $\mathbf{Q} \circ \mathbf{R}(p) = p$.
\end{remark}

\begin{remark}
It is worthwile to notice that the map $\varphi \mapsto \varphi \circ \mathsf{q}$ defines an isomorphism of the two ring of invariants:
$$
\begin{CD}
\mathsf{q}^{*}: \ZZ[a_{1}, \dots, a_{g}] & @>{\sim}>> & \ZZ[s_{1}, \dots, s_{g}]
\end{CD}
$$
where we identify $\ZZ[a_{1}, \dots, a_{g}]$ and
$\ZZ[a_{1}, \dots, a_{2 g - 1}]/((a_{2g - n} - a_{n}))$.
If we define a polynomial mapping $\mathsf{a}:\CC^{g} \longrightarrow \CC^{g + 1}$ by
$$
\mathsf{a}: t = (t_{1}, \dots, t_{g}) \mapsto (a_{0}(t), \dots, a_{g}(t)).
$$
and if $\mathsf{s}(t) = (s_{0}(t), s_{1}(t), \dots, s_{g}(t))$ is the (extended) Vi\`ete map, then
$$\mathsf{a} = \mathsf{q} \circ \mathsf{s}.$$
These maps are gathered in the following diagram:
\bigskip
$$
\xymatrix{
& \ZZ[\{e^{i \theta_{j}},e^{- i \theta_{j}}\}] \ar@{-}[d]^{W} \ar[rr] &&
\ZZ[t_{1}, \dots, t_{g}] \ar@{-}[d]^{\mathfrak{S}_{g}}\\
R(G): &
\ZZ[a_{1}, \dots, a_{g}] \ar[r]^{\mathsf{q}^{*}} \ar@/_6mm/[rr]_{\mathsf{a}^{*}} &
\ZZ[s_{1}, \dots, s_{g}] \ar[r]^{\mathsf{s}^{*}} &
\ZZ[t_{1}, \dots, t_{g}]^{\sym}}
$$
\end{remark}

We apply the preceding to the character of the $n$-th exterior power $\wedge^{n} \, \pi$ of the identity representation $\pi$ of $G$ in $\CC^{2 g}$. For $0 \leq n \leq 2g$, let
\begin{equation}
\label{TraceExt}
\vat_{n}(m) = \text{Trace} (\wedge^{n} \, m).
\end{equation}
Generally speaking, the representation $\wedge^{n} \pi$ is reducible, and we describe now its decomposition. For each dominant weight $\omega$ of $\USp_{2g}$, we denote by $\pi(\omega)$ the irreducible representation with highest weight $\omega$, cf. \cite{BkiLIE79en}. The following lemma is used in Lemma \ref{AnMean}.
\begin{lemma}
\label{WeightDecomp}
Let $\omega_{1}, \dots, \omega_{g}$ be the fundamental weights of $\USp_{2g}$. Then
\begin{enumerate}
\item
If $1 \leq 2 n + 1 \leq g$, we have
$$
\wedge^{2 n + 1} \pi = \bigoplus_{0 \leq j \leq n} \pi(\omega_{2 j + 1}).
$$
\item
If $2 \leq 2 n \leq g$, we have
$$
\wedge^{2 n} \pi = 1 \oplus \bigoplus_{1 \leq j \leq n} \pi(\omega_{2 j}).
$$
\end{enumerate}
\end{lemma}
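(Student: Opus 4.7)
The plan is to reduce both statements to the single basic decomposition
$$\wedge^{k}\pi \;\cong\; \pi(\omega_{k}) \,\oplus\, \wedge^{k-2}\pi \qquad (1 \leq k \leq g),$$
with the convention $\wedge^{-1}\pi = 0$ and $\wedge^{0}\pi = \mathbf{1}$. Granting this recursion, the two formulas of the lemma drop out by immediate telescoping: for the odd case one unfolds down to $\wedge^{1}\pi = \pi(\omega_{1})$, obtaining $\wedge^{2n+1}\pi = \bigoplus_{j=0}^{n}\pi(\omega_{2j+1})$; for the even case one unfolds down to $\wedge^{0}\pi = \mathbf{1}$, obtaining $\wedge^{2n}\pi = \mathbf{1} \oplus \bigoplus_{j=1}^{n}\pi(\omega_{2j})$.

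To set up the recursion I would exploit the $G$-invariant symplectic form $J$ on $V = \CC^{2g}$. Viewing the inverse form as an element $\Omega \in (\wedge^{2}V)^{G}$ yields two mutually adjoint $G$-equivariant maps on the exterior algebra: the Lefschetz operator $L : \wedge^{k-2}V \to \wedge^{k}V$, $x \mapsto \Omega \wedge x$, and the contraction $\Lambda : \wedge^{k}V \to \wedge^{k-2}V$ given by interior multiplication by $J$. For $1 \leq k \leq g$, the operator $L$ is injective and $\Lambda$ is surjective; combined with complete reducibility of representations of the compact group $G$, this produces a $G$-stable direct-sum decomposition
$$\wedge^{k}V \;=\; \ker\Lambda \,\oplus\, \mathrm{im}\, L, \qquad \mathrm{im}\, L \;\cong\; \wedge^{k-2}V.$$

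The main technical step is the identification of $\ker\Lambda$ with the irreducible representation $\pi(\omega_{k})$. Choosing symplectic bases $e_{1},\dots,e_{g}$ and $f_{1},\dots,f_{g}$ of $V$ adapted to the maximal torus $T$, with respective weights $\varepsilon_{1},\dots,\varepsilon_{g}$ and $-\varepsilon_{1},\dots,-\varepsilon_{g}$, normalized so that $\Omega = \sum_{i}e_{i}\wedge f_{i}$, one checks that $v_{k} = e_{1}\wedge\cdots\wedge e_{k}$ lies in $\ker\Lambda$ and realizes the dominant weight $\omega_{k} = \varepsilon_{1}+\cdots+\varepsilon_{k}$, which is the highest dominant weight occurring in $\wedge^{k}V$; hence $\pi(\omega_{k}) \subset \ker\Lambda$. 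Equality is obtained by matching dimensions: $\dim\ker\Lambda = \binom{2g}{k}-\binom{2g}{k-2}$ coincides with the Weyl-dimension value of $\pi(\omega_{k})$. This classical description of the fundamental representations of $\USp_{2g}$ is the principal obstacle; rather than reprove it from scratch, I would cite \cite[Ch.~8]{BkiLIE79en}. With the recursion in hand, the two decompositions of the lemma follow by the telescoping induction indicated above.
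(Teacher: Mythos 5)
The paper itself gives no proof: it simply refers the reader to Katz \cite[Lemma, p.\ 62]{Katz01} and to Bourbaki \cite[Ch.\ 8, \S 13, n$^\circ$3, (IV)]{BkiLIE79en} for the type $C_g$ case. Your outline, based on the $G$-invariant element $\Omega \in \wedge^2 V$, the Lefschetz operator $L$, the contraction $\Lambda$, the splitting $\wedge^k V = \ker\Lambda \oplus \operatorname{im} L$ for $k \leq g$, and the identification $\ker\Lambda \cong \pi(\omega_k)$ via highest weight and dimension count, is precisely the classical argument carried out in the Bourbaki reference; the telescoping from $\wedge^k\pi \cong \pi(\omega_k) \oplus \wedge^{k-2}\pi$ then reproduces both formulas of the lemma. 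So your proposal does not diverge from the paper's approach in any substantive way: it fills in the proof that the paper delegates to its references, following the same route. The only caveat is that the hard step (irreducibility of $\ker\Lambda$ and the matching of its dimension with the Weyl dimension formula for $\pi(\omega_k)$) is itself deferred to Bourbaki, so the net amount of work actually proved from scratch is comparable to what the paper does by citing; but the reduction to that citation is organized correctly, the ranges $1 \leq k \leq g$ for injectivity of $L$ are right, and the base cases $\wedge^0\pi = \mathbf{1}$, $\wedge^1\pi = \pi(\omega_1)$ are handled properly.
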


\begin{proof}
See \cite[Lemma, p. 62]{Katz01}; the corresponding result for a simple Lie algebra of type $C_{g}$ is proved in \cite[Ch. 8, \S 13, n$^\circ$3, (IV), p. 206-209]{BkiLIE79en}.
\end{proof}

The characteristic polynomial of $m \in \USp_{2 g}$ is
$$
\pc_{m}(u) = \det(u.\I{} - m) =
\sum_{n = 0}^{2 g} (- 1)^{n} \vat_{n}(m)  u^{2 g - n}.
$$
The dual pairing
$$
\begin{CD}
\wedge^n V \times \wedge^{2g-n} V & @>>> & \wedge^{2g} V = \CC
\end{CD}
$$
implies that $\vat_{2g-n} = \vat_{n}$ for $0 \leq n \leq 2g$, and this proves that $\pc_{m} \in \Phi_{2 g}$. If $m$ is conjugate to $h(\theta_{1}, \dots, \theta_{g})$, then
$$
\pc_{m}(u) = \prod_{j = 1}^{g} (u^{2} - u t_{j} + 1),
$$
with $t_{j} = 2 \cos \theta_{j}$, and hence $\vat_{n} \circ k \in \ZZ[t_{1}, \dots t_{g}]^{\sym}$ as expected.

In the notation of Theorem \ref{CoefToSym}, we have
\begin{equation}
\label{IdTraceExt}
\vat_{n} \circ k(t) = a_{n}(t),
\end{equation}
and we deduce from this theorem:

\begin{theorem}
\label{TraceToSym}
Let $m \in \USp_{2 g}$ be conjugate to $k(t_{1}, \dots, t_{g})$. If $0 \leq n \leq g$, then
$$
\vat_{n}(m) = \sum_{j = 0}^{n/2} \binom{g + 2 j - n}{j} s_{n - 2 j}(t).
\rlap \qed
$$
\end{theorem}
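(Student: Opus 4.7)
The plan is to recognize that Theorem \ref{TraceToSym} is essentially an immediate consequence of Theorem \ref{CoefToSym}, once the characters $\vat_n$ of the exterior powers are identified with the coefficients of the characteristic polynomial in the Vi\`ete parameters. So the work splits into (i) making this identification explicit and (ii) invoking the combinatorial formula already established in Theorem \ref{CoefToSym}.

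First I would start from the standard identity
$$
\pc_m(u) = \det(u.\I - m) = \sum_{n=0}^{2g} (-1)^n \vat_n(m) u^{2g-n},
$$
which comes directly from the fact that $\vat_n(m) = \Trace(\wedge^n m)$ is the $n$-th elementary symmetric function of the eigenvalues of $m$. Since $m$ is conjugate to $k(t_1,\dots,t_g)$, its eigenvalues are $\{e^{\pm i \theta_j}\}$ with $t_j = 2\cos\theta_j$, so we may factor
$$
\pc_m(u) = \prod_{j=1}^{g} (u - e^{i\theta_j})(u - e^{-i\theta_j}) = \prod_{j=1}^{g} (u^2 - u t_j + 1).
$$

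Second, I would compare this factorization with the defining relation of $a_n(t)$ in Theorem \ref{CoefToSym}: the polynomial $\prod_{j=1}^g (u^2 - u t_j + 1)$ is expanded as $\sum_{n=0}^{2g}(-1)^n a_n(t) u^{2g-n}$. Matching coefficients of $u^{2g-n}$ on the two sides yields the identity \eqref{IdTraceExt}, namely
$$
\vat_n(m) = \vat_n \circ k(t) = a_n(t),
$$
valid for $0 \leq n \leq 2g$ (and consistent with the palindromic symmetry $\vat_{2g-n} = \vat_n$ noted after equation \eqref{TraceExt}).

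Third, the conclusion is immediate: for $0 \leq n \leq g$, Theorem \ref{CoefToSym} provides the explicit formula
$$
a_n(t) = \sum_{j=0}^{n/2} \binom{g+2j-n}{j} s_{n-2j}(t),
$$
and substituting gives the required expression for $\vat_n(m)$. There is no real obstacle here; the only minor point is to make sure that both sides use the same sign convention in the defining expansion of the characteristic polynomial, which they do, since the factor $(-1)^n$ appears with the same meaning in both definitions. The substance of the theorem already lies in Lemma \ref{MapQ} and Theorem \ref{CoefToSym}; the present statement is their geometric interpretation as the decomposition of the character of $\wedge^n \pi$ in terms of the elementary symmetric functions of the $t_j$.
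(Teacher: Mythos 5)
Your proposal is correct and follows exactly the same route as the paper: identify $\vat_n(m)$ with the coefficient $a_n(t)$ of $\prod_j (u^2 - ut_j + 1)$ via the characteristic polynomial (the identity labeled \eqref{IdTraceExt} in the paper), then apply Theorem \ref{CoefToSym}. The paper leaves this as an immediate deduction (the \qed appears in the theorem statement itself), and your write-up fills in precisely the steps it intends.
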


For instance, according to \eqref{exampleq23}:

\begin{itemize}
\item
if $g = 2$: \quad $\vat_{2}(m) = s_{2}(t) + 2$ (cf. Remark \ref{SecondTrace}),
\item
if $g = 3$: \quad $\vat_{2}(m) = s_{2}(t) + 3$, \quad $\vat_{3}(m) = s_{3}(t) + 2 s_{1}(t).$
\end{itemize}


\end{document}